\providecommand{\keywords}[1]
{
  \small	
  \textbf{\textbf{Keywords.}} #1
} 
\newcommand{\reff}[1]{(\ref{#1})}
\newcommand{\ddr}{\mathrm{d}}
\theoremstyle{plain}
\newtheorem{theo}{Theorem}[section]
\newtheorem{theostar}{Theorem}
\newtheorem{cor}[theo]{Corollary}
\newtheorem{prop}[theo]{Proposition}
\newtheorem{proposition}[theo]{Proposition}
\newtheorem{lem}[theo]{Lemma}
\theoremstyle{remark}
\newtheorem{rem}[theo]{Remark}
\def\P{\ensuremath{\mathbb{P}}}
\def\R{\ensuremath{\mathbb{R}}}
\def\Z{\ensuremath{\mathbb{Z}}}
\def\to{\rightarrow}
\newcommand{\ind}{{\bf 1}}
\newcommand{\expp}[1]{\mathop {e^{ #1}}}
\title{Decompositions of CSBPs via Poissonian Intertwining}
\date{\today}
\author{Cl\' ement Foucart\footnote{CMAP, Ecole Polytechnique. Email: clement.foucart@polytechnique.edu} \ and Olivier H\' enard\footnote{LMO, Universit\'e Paris-Saclay. Email: olivier.henard@universite-paris-saclay.fr}}
\begin{document}
\maketitle

\begin{abstract}
We revisit certain decompositions of continuous-state branching processes (CSBPs), commonly referred to as skeletal decompositions,
through the lens of intertwining of semi-groups.
Precisely, we associate to a CSBP \(X\) with branching mechanism \(\psi\) a family of $\mathbb{R}_+\times \mathbb{Z}_+$-valued  branching processes \((X^\lambda, L^\lambda)\), indexed by a parameter \(\lambda \in (0, \infty)\), that satisfies an intertwining relationship with \(X\) through the Poisson kernel with parameter \(\lambda x\). The continuous component \(X^\lambda\) has the same law as $X$, while the discrete component $L^\lambda$, conditionally on \(X^\lambda_t\), has a Poisson distribution with parameter $\lambda X^\lambda_t$. 
The law of \((X^\lambda, L^\lambda)\) depends on the position of \(\lambda\) within \([0, \infty) = [0, \rho) \cup [\rho, \infty)\), where \(\rho\) is the largest positive root of \(\psi\).  
When \(\lambda \geq \rho\), various well-known results concerning skeleton decompositions are recovered. In the supercritical case (\(\rho > 0\)), when $\lambda<\rho$, a novel phenomenon arises: a birth term appears in the skeleton, corresponding to a one-unit proportional immigration from the continuous to the discrete component. Along the way, the class of continuous-time branching processes taking values in \(\mathbb{R}_+ \times \mathbb{Z}_+\) is constructed.
\end{abstract}
\keywords{Continuous-state branching process, Intertwining, Esscher transform, Two-type branching process, Explosion}

\section{Introduction}
This paper investigates how Continuous-State Branching Processes (CSBPs) can be decomposed into $\mathbb{R}_+\times \mathbb{Z}_+$-valued branching processes using intertwining relationships between Markovian semigroups. An intertwining relationship is a property of commutation of semigroups, or generators, through some kernel. Formally, two semigroups $P_t$ and $Q_t$ are said to be intertwined with respect to a kernel $\Lambda$ if they satisfy the following relationship
\begin{equation}\label{intro:intertwining}
\Lambda P_t=Q_t\Lambda, \quad \forall t\geq 0.\end{equation}

Intertwining is intimately connected to the problem of determining whether a function of a Markov process remains Markovian, as explored by Pitman and Rogers \cite{RP81}. 
It also arises in Markovian filtering, see e.g. Kurtz and Ocone \cite{zbMATH04069937}, Kurtz \cite{zbMATH01215969}, and Kurtz and Nappo \cite{zbMATH05919853}, where  Markov mapping theorems are established in the setting of martingale problems.  Numerous examples of intertwined Markov semigroups have been discovered, including those presented by Carmona et al. \cite{CPY98} and Pal and Shkolnikov \cite{arXiv:1306.0857}.

The intertwining theory has been further developed in various directions, becoming a fundamental tool in  stochastic processes. They play for instance an important role in the study of strong stationary times, see for instance Diaconis and Fill \cite{zbMATH04192817},  Miclo \cite{zbMATH07206399} and Arnaudon et al. \cite{zbMATH07811897}.  Intertwining likewise appears in analyzing certain interacting particle systems, see e.g. Floreani et al. \cite{zbMATH07854756}.  It also plays a key role in the so-called \textit{lookdown} construction of the genealogy of branching populations, see for instance Kurtz \cite{zbMATH07163705} and Etheridge and Kurtz \cite{zbMATH07114706} for recent works in this direction. 

The concept of decomposition of branching processes  into ``skeletons"  dates back to Harris \cite{Harris}, who showed that any supercritical Bienaymé-Galton-Watson process with positive probability of extinction can be embedded in a two-type branching process,  see also Athreya-Ney's book \cite[Chapter I-Section 12]{AthreyaNey}. In this framework, one type represents individuals with infinite line of descent (called prolific or immortal individuals), while the other corresponds to mortal individuals. This decomposition has become a key tool in branching process theory. In the context of Continuous State Branching Processes (CSBPs), the prolific individuals form a discrete branching process, while the non-prolific ones form a continuum, as explored by Bertoin et al. \cite{BF08}, see also  Lambert and Uribe Bravo, \cite{zbMATH07940610}, for a recent work in this direction in the context of splitting trees. The framework with spatial motion  has also garnered considerable attention, we refer for instance to Eckhoff et al. \cite{zbMATH06502682}, Fekete et al. \cite{zbMATH07284534} and the references therein.

In a seminal paper, Duquesne and Winkel \cite{DW07} introduced a nested family of discrete trees with edge lengths that is consistent under Poissonian sampling of the leaves; they proved that any such family embeds a Lévy real tree encoding the genealogy of a CSBP. In a similar spirit, Abraham and Delmas \cite{zbMATH06047803} have constructed a tree-valued Markov process, where evolving skeletons serve as the hprimary objects. See also  Abraham and Delmas \cite{zbMATH05598047} and Abraham et al. \cite{zbMATH05946939} for closely related studies. More recently, these decompositions have been revisited using Poissonization techniques applied to the stochastic differential equation with jumps solved by a CSBP, as discussed by Fekete et al. \cite{FFK19}. In all these works, Esscher transforms of the branching mechanism play a central role. To summarize, given an eventually positive branching mechanism $\psi$, and denoting by $\rho$ its largest positive root, a skeleton decomposition consists in ``grafting" subcritical continuous masses — evolving with a branching mechanism given by an Esscher transform of $\psi$ at the right of $\rho$ — onto a discrete branching process, so that the total continuous mass is a CSBP governed by $\psi$.

Although not always highlighted in the works previously cited -- see however \cite[Page 722]{BF08} and \cite[Page 1283]{zbMATH07940610} -- a common feature in such skeletal decompositions lies on the fact that the joint process encoding both the discrete component and the continuous mass satisfies the branching property. Two-type branching processes taking values in $\mathbb{R}_+\times \mathbb{Z}_+$ come therefore naturally into play. To our knowledge, no general treatment of this class of processes has been presented before. We summarize their fundamental properties in Section \ref{sec:preliminaries}, see Theorem \ref{thm:branchingbitype} and Proposition~\ref{Fellerprop}, and recall well-known facts about one-dimensional branching processes. The $\mathbb{R}_+\times \mathbb{Z}_+$-valued branching processes are constructed in Section \ref{sec:constructionbitype} by using classical results from the theory of Markov processes.

Our main results are presented in Section \ref{sec:mainresult}. We start by establishing an intertwining relationship, through the Poisson kernel with parameter $\lambda x>0$, between the generator of a CSBP (which may be immortal, namely its branching mechanism $\psi$ can be negative) and that of a specific $\mathbb{R}_+\times \mathbb{Z}_+$-valued branching process $(X^\lambda,L^{\lambda})$, see Theorem~\ref{thm1:main}. We show then that a relationship of the form \eqref{intro:intertwining} holds between $P_t$ the semigroup of $(X^\lambda,L^{\lambda})$ and $Q_t$ the semigroup of the CSBP$(\psi)$, see Theorem~\ref{thm:main}. The process $(L_t^{\lambda},t\geq 0)$ is the so-called \textit{skeleton}. The decomposition of the CSBP follows from an application of Pitman-Rogers theorem, see  Theorem~\ref{thmRG}, and states the following:  if \(X^\lambda\) starts from \(x > 0\) and the initial distribution of the skeleton \(L^\lambda\) has a Poisson law with parameter \(\lambda x\), the first coordinate projection \(X^\lambda\) is a  CSBP$(\psi)$ started from \(x\) and for any $t\geq 0$, the law of $L_t^{\lambda}$, conditionally on $X_t^{\lambda}$, is Poisson with parameter $\lambda X_t^{\lambda}$.

The dynamics of \((X^\lambda, L^\lambda)\) depends on whether $\lambda>\rho$, $\lambda=\rho$ or $\lambda<\rho$. When \(\lambda =\rho\), we recover the two-type process studied in \cite{BF08} for which the skeleton is the prolific discrete tree. When \(\lambda >\rho\), a death term arises along the skeleton and various results from \cite{DW07} are reobtained. The intertwining approach also enables us to investigate the setting $\lambda<\rho$, the decomposition here involves the Esscher transform \textit{at the left} of $\rho$, a scenario that, to our knowledge, has not been previously studied. As we shall see, there is a significant change in the dynamics: the skeleton is no longer autonomous, and the continuous mass  now generates discrete-type individuals through a proportional birth term. 
\\

Next, we observe that if the CSBP explodes without being killed, it does so simultaneously with any of its skeletons, see Proposition~\ref{prop:explosion}. Last, we establish that, for any branching mechanism (including the explosive and immortal ones), the skeletons, once rescaled by $\lambda$, converge weakly in the Skorokhod topology toward the CSBP, as $\lambda$ goes to $\infty$, see Theorem~\ref{thm:convergence}.
\\

\noindent \textbf{Notation.} We set $\mathbb{R}_+:=[0,\infty)$ and let $\mathbb{N}$ and $\mathbb{Z}_+$ be  respectively the sets of positive integers and non-negative integers. Denote by $C_0(\mathbb{R}_+)$ and $C_0(\mathbb{R}_+\times \mathbb{Z}_+)$ the spaces of continuous functions on $\mathbb{R}_+$ and $\mathbb{R}_+\times \mathbb{Z}_+$, respectively, that are vanishing at infinity. Denote by $C^2_0(\mathbb{R}_+)$, the functions that are twice-differentiable with first two derivatives in $C_0(\mathbb{R}_+)$.  For any function $f$ on $\mathbb{R}_+\times \mathbb{Z}_+$, such that $x\mapsto f(x,\ell) \in C_0^2(\mathbb{R}_+)$, the first two derivatives with respect to $x$ are denoted by $f'(x,\ell)$ and $f''(x,\ell)$.
\section{Preliminaries}\label{sec:preliminaries}
\subsection{$\mathbb{R}_+\times\mathbb{Z}_+$-valued continuous-time branching processes}
CSBPs have been introduced by Jirina \cite{zbMATH03270278}, Lamperti \cite{Lamperti2,Lamperti1} and Silverstein \cite{zbMATH03294035}. They are the scaling limits of Bienaymé-Galton-Watson processes
and represent the random evolution of a \textit{continuous} population. Two-dimensional branching processes with continuous-state space $\mathbb{R}_+\times \mathbb{R}_+$ have been defined  by Watanabe \cite{WA69}.

They are specific affine processes, see Duffie et al. \cite{DFS03} and Caballero et al. \cite{zbMATH06775439} and are also known to be strong solutions to certain stochastic differentials equations with jumps, see Barczy et al. \cite{zbMATH06433930}.  

Two-type branching processes taking values in \( \mathbb{R}_+ \times \mathbb{Z}_+ \) do not form a subclass of the branching processes with values in \(\mathbb{R}_+^2 \) studied in \cite{WA69}, for the same reason that branching processes taking values in \(\mathbb{Z}_+ \) do not form a subclass of the branching processes taking values in \(\mathbb{R}_+\) (the CSBPs): the former may have jumps of size $-1$, and not the latter.

The case where one component evolves in $\mathbb{Z}_+$ is thus structurally distinct and, to our knowledge, not reducible to any case studied in the literature.

While their general form will certainly not come as a surprise to the reader, they are central to this article and might prove useful in other contexts. We therefore begin by presenting them in detail. 
\\

Let $\gamma\in \mathbb{R}$ and  $b,\sigma,d, \kappa, \mathrm{k}\in \mathbb{R}_+$. Let $\pi(\ddr y,\ddr k)$ and $\rho(\ddr y,\ddr k)$ be two measures on $\mathbb{R}_+\times \mathbb{Z}_+$

such that
\begin{equation}\label{eq:integrabilitypirho}\int_{0}^{\infty}(1\wedge y^2)\, \pi(\ddr y,\{0\})+\sum_{k\geq 1}\int_{0}^{\infty}\pi(\ddr y,\ddr k)<\infty, \ \int_{0}^{\infty}\,(1\wedge y)\,\rho(\ddr y,\{0\})+ \sum_{k\geq 1}\int_{0}^{\infty}\,\rho(\ddr y,\ddr k)<\infty .\end{equation}
We call \textit{admissible} such parameters. Define the operator 
\begin{align}\label{eq:genL}
\mathscr{L}f(&x,\ell):=\gamma x f'(x,\ell)+b \ell f'(x,\ell)+\frac{\sigma^2}{2}xf''(x,\ell)-\kappa xf(x,\ell)-\mathrm{k} \ell f(x,\ell)\\
&+x\sum_{k\geq 0}\int_{\mathbb{R}_+}\left(f(x+y,\ell+k)-f(x,\ell)-y\ind_{(0,1)}(y)f'(x,\ell)\right)\pi(\ddr y,\ddr k)\nonumber\\
&+\ell\sum_{k\geq 0}\int_{\mathbb{R}_+}\left(f(x+y,\ell+k)-f(x,\ell)\right)\rho(\ddr y,\ddr k)+d\ell \big(f(x,\ell-1)-f(x,\ell)\big),\nonumber
\end{align}
with $(x,\ell) \in \mathbb{R}_+\times \mathbb{Z}_+$ and $f:(x,\ell)\mapsto f(x,\ell)$ a function  in the space $\mathcal{D}$:
\begin{align}\label{eq:domainofgeneratorL}
\mathcal{D}:=\Big\{ f:&(x,\ell)\in \mathbb{R}_+ \times \mathbb{Z}_+  \mapsto f(x,\ell)\in \mathbb{R}, \text{ such that: } \nonumber \\ 
&\forall \ell\in \mathbb{Z}_+,\ 
x\mapsto f(x,\ell) \in C^2_0(\mathbb{R}_+),
\quad \underset{x\rightarrow \infty}{\lim}x\big(|f(x,\ell)|+|f'(x,\ell)|+|f''(x,\ell)|\big)=0, \nonumber \\
&\qquad \qquad \qquad \qquad \qquad 
\text{ and } \forall x\in \mathbb{R}_+,  \underset{\ell \rightarrow \infty}{\lim} \ell \big(|f(x,\ell)|+|f'(x,\ell)|+|f''(x,\ell)|\big)=0 \Big\}.
\end{align} 
Consider the one-point compactification of $E:=\mathbb{R}_+\times \mathbb{Z}_+$, 

\vspace{2mm}
$E^{\Delta}:=\big(\mathbb{R}_+\times \mathbb{Z}_+\big) \cup \{\Delta\}$, with $\Delta:=\{(x,\ell): x+\ell=\infty\}$ and set $\mathscr{L}f(\Delta):=0$. 

\begin{theo}\label{thm:branchingbitype}  For any admissible parameters $(\gamma,b,\sigma,\kappa, \mathrm{k},\pi, \rho)$, there exists a unique $E^{\Delta}$-valued c\`adl\`ag strong Markov process $\mathbf{X}=(X,L)$, with cemetery state $\Delta$, solution to the martingale problem 
\begin{equation}\label{eq:MPbitype}\mathrm{MP}_{\mathbf{X}}(\mathscr{L},\mathcal{D}):\quad \forall f \in \mathcal{D}, \left(f(\mathbf{X}_t)-\int_{0}^{t}\mathscr{L}f(\mathbf{X}_s)\ddr s,t\geq 0\right) \text{ is a martingale}.
\end{equation}
Moreover, the semigroup of $\mathbf{X}$ satisfies for any $(x,n)$, $t\geq 0$ and $(q,r)\in (0,\infty)\times (0,1)$
\begin{equation}\label{dualsemigroup}
\mathbb{E}_{(x,n)}[e^{-qX_t}r^{L_t}]=e^{-x u_t(q,r)}f_t(q,r)^n,
\end{equation}
with $t\mapsto F_t(q,r):=\left(
u_t(q,r), f_t(q,r)\right)$, the unique solution to the two-dimensional o.d.e
\begin{align}\label{odeuf}
&\frac{\ddr }{\ddr t}
F_t(q,r)
=-\mathbf{\Psi}\big(F_t(q,r)\big),\\
&F_0(q,r)=\left(u_0(q,r),f_0(q,r)\right)=\left(q, r\right),
\end{align}
where $\mathbf{\Psi}=\big(\Psi_c, -\Psi_d\big)$, with for any $q\in (0,\infty)$ and $r\in [0,1)$, \begin{align}
\Psi_c(q,r)&:=\sum_{k\geq 0}\int_{\mathbb{R}_+}\left(e^{-qy}r^{k}-1+qy\ind_{(0,1)}(y)\right)\pi(\ddr y, \ddr k)-\gamma q+\frac{\sigma^2}{2}q^2-\kappa,\label{eq:Psic}\\
\Psi_d(q,r)&:=\sum_{k\geq 0}\int_{\mathbb{R}_+}\left(e^{-qy}r^{k+1}-r\right)\rho(\ddr y, \ddr k)-bqr+d(1-r)-\mathrm{k}r.\label{eq:Psid}
\end{align}
\end{theo}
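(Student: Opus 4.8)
The plan is to construct the process $\mathbf{X}=(X,L)$ by the standard Markov-process machinery, then verify the Laplace-functional identity \eqref{dualsemigroup} separately. For existence and uniqueness of a solution to the martingale problem $\mathrm{MP}_{\mathbf{X}}(\mathscr{L},\mathcal{D})$, I would first check that $\mathscr{L}$ maps $\mathcal{D}$ into $C_0(E^{\Delta})$ and that $(\mathscr{L},\mathcal{D})$ satisfies the positive maximum principle: the integrability conditions \eqref{eq:integrabilitypirho} together with the decay conditions built into $\mathcal{D}$ make every term in \eqref{eq:genL} well-defined and vanishing at $\Delta$, and the drift/diffusion/jump/killing structure is exactly of the type for which the positive maximum principle holds (this is where the $(1\wedge y^2)$ truncation for $\pi(\ddr y,\{0\})$ and the $(1\wedge y)$ truncation for $\rho(\ddr y,\{0\})$ are used, the latter being the integrability needed because the $\rho$-part has no compensator). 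Then, since $E^{\Delta}$ is compact and $\mathcal{D}$ is (dense enough and) an algebra separating points, the Hille–Yosida–Ray theorem gives a Feller semigroup whose generator extends $(\mathscr{L},\mathcal{D})$, and hence a càdlàg strong Markov process solving the martingale problem; uniqueness follows from well-posedness of the martingale problem, which here can be obtained either from the Feller property or, more robustly, by a duality/intertwining argument — one exhibits the exponential functions $(x,\ell)\mapsto e^{-qx}r^{\ell}$ as a separating, duality-determining class and uses \eqref{dualsemigroup} itself to pin down one-dimensional distributions, hence the law of the process.

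For the branching identity \eqref{dualsemigroup}, the key computation is to apply $\mathscr{L}$ to the test function $g_{q,r}(x,\ell):=e^{-qx}r^{\ell}$ (which, after a harmless approximation, lies in $\mathcal{D}$ or at least in the domain of the extended generator). A direct substitution gives
\begin{equation*}
\mathscr{L}g_{q,r}(x,\ell)=-\big(x\,\Psi_c(q,r)+\ell\,\Psi_d(q,r)\big)\,g_{q,r}(x,\ell),
\end{equation*}
where $\Psi_c,\Psi_d$ are precisely \eqref{eq:Psic}–\eqref{eq:Psid}: the $\gamma x f'$ term yields $-\gamma q$, the $\tfrac{\sigma^2}{2}xf''$ term yields $\tfrac{\sigma^2}{2}q^2$, the killing terms yield $-\kappa$ and $-\mathrm{k}r$ (the latter after dividing by $r^\ell$, using $r^{\ell}\cdot r = r^{\ell+1}$... more precisely the $\mathrm{k}\ell f$ term contributes $-\mathrm{k}\ell g_{q,r}$, consistent with the $-\mathrm{k}r$ in $\Psi_d$ once one factors $g_{q,r}$), the $\pi$-integral yields $\int(e^{-qy}r^k-1+qy\ind_{(0,1)}(y))\pi(\ddr y,\ddr k)$, the $\rho$-integral yields $\int(e^{-qy}r^{k}-1)\rho(\ddr y,\ddr k)$ which after the factor $\ell r^{\ell}$ matches the $r^{k+1}-r$ form once rewritten, the $b\ell f'$ term yields $-bqr$ (again up to the factoring), and the $d\ell(f(x,\ell-1)-f(x,\ell))$ term yields $d\ell(r^{-1}-1)g_{q,r}$, i.e. the $d(1-r)$ contribution after multiplying by $r$. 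Granting this, one runs the standard argument: fix $(x,n)$, set $h(t):=\mathbb{E}_{(x,n)}[g_{q,r}(\mathbf{X}_t)]$, and use the martingale property to get $h$ of the asserted product form by checking that $e^{-xu_t(q,r)}f_t(q,r)^n$ solves the same linear evolution equation. Concretely, let $(u_t,f_t)$ solve the ODE \eqref{odeuf}; by the branching/multiplicative structure it suffices to treat $(x,n)=(x,0)$ and $(x,n)=(0,1)$. In each case one differentiates $t\mapsto \mathbb{E}_{(x,n)}\big[G(t-s,\mathbf{X}_s)\big]$ with $G(t,(y,m)):=e^{-y u_t(q,r)}f_t(q,r)^m$, showing via the martingale problem and the generator identity above that this is constant in $s\in[0,t]$, so that $s=0$ and $s=t$ give \eqref{dualsemigroup}; the ODE \eqref{odeuf} is exactly the compatibility condition $\partial_t G + \mathscr{L}G = 0$. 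Existence/uniqueness and non-explosion of $(u_t,f_t)$ before an explosion time, and the convention $\mathscr{L}f(\Delta)=0$, handle the behaviour at $\Delta$.

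The main obstacle is the verification that the martingale problem is well-posed, and in particular the careful check that $(\mathscr{L},\mathcal{D})$ generates a Feller semigroup on the compact space $E^{\Delta}$ — the delicate points being (i) that $\mathscr{L}f$ genuinely extends continuously to $\Delta$ with value $0$, which is where the weighted-decay conditions in the definition of $\mathcal{D}$ are essential, (ii) that the unbounded coefficients $x$ and $\ell$ multiplying the various operators are controlled by precisely those decay conditions, and (iii) that $\mathcal{D}$ is a core, i.e. large enough to determine the semigroup. A secondary technical point is that $g_{q,r}$ is not itself in $\mathcal{D}$ (it does not decay in $\ell$ unless $r<1$, and its $x$-decay is only exponential, which is fine, but the $\ell$-decay requires $r<1$, which is assumed); one should either argue directly that $g_{q,r}$ is in the domain of the full generator by an approximation argument (truncating in $\ell$ and localizing in $x$) or restrict attention to the martingale problem and pass to the limit. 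Once these functional-analytic points are settled, the remainder — the generator computation and the ODE identification — is a routine but somewhat lengthy calculation, which I would organize by treating the four groups of terms (linear drift, diffusion, the two jump measures, the death term) in turn and matching them against \eqref{eq:Psic}–\eqref{eq:Psid}.
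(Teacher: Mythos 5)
Your overall plan coincides with the paper's: show $(\mathscr{L},\mathcal{D})$ satisfies the positive maximum principle and maps into $C_0(E^\Delta)$, obtain a c\`adl\`ag solution of the martingale problem from Ethier--Kurtz [Thm.~5.4, p.~199], compute $\mathscr{L}$ on the exponential test functions, and close via a duality with the ODE to pin down one-dimensional laws and hence uniqueness (Ethier--Kurtz Thm.~4.2, p.~184). Three points to tighten, one of which is a genuine error as written.

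First, the sign and form of your generator identity are off. The correct computation (the paper's Lemma on algebraic duality) gives
\[
\mathscr{L}f_{q,r}(x,\ell)= x\,\Psi_c(q,r)\,e^{-qx}r^{\ell}+\ell\,\Psi_d(q,r)\,e^{-qx}r^{\ell-1}
= -\Psi_c(q,r)\,\partial_q g_{x,\ell}(q,r)+\Psi_d(q,r)\,\partial_r g_{x,\ell}(q,r),
\]
where $g_{x,\ell}(q,r)=e^{-qx}r^{\ell}$, i.e. $\mathscr{L}f_{q,r}=-\mathbf{\Psi}\cdot\nabla g_{x,\ell}$. A quick sanity check: the diffusion term $\tfrac{\sigma^2}{2}x\,\partial_x^2 f_{q,r}=\tfrac{\sigma^2}{2}q^2\,x\,f_{q,r}$ carries a plus sign, matching the $+\tfrac{\sigma^2}{2}q^2$ in $\Psi_c$. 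Your version $\mathscr{L}g_{q,r}=-\big(x\Psi_c+\ell\Psi_d\big)g_{q,r}$ has both an overall sign flip and the $\ell\Psi_d$ (rather than $\tfrac{\ell}{r}\Psi_d$) coefficient; carrying it forward in the backward-Kolmogorov bookkeeping produces $\dot u_t=+\Psi_c$ and $\dot f_t=-f_t\Psi_d$, neither of which is the stated ODE.

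Second, your ``secondary technical point'' about $g_{q,r}$ not being in $\mathcal{D}$ is a non-issue: for $q>0$ and $r\in(0,1)$ the function $f_{q,r}$ lies in $\mathcal{D}$ directly (both decay conditions in \eqref{eq:domainofgeneratorL} hold thanks to the exponential decay in $x$ and the geometric decay in $\ell$), and the paper in fact uses $D=\mathrm{Vect}\{f_{q,r}\}$ as the core. No truncation or localization is needed.

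Third, you underweight the substantive analytic lemma: one must prove that the solution $(u_t(q,r),f_t(q,r))$ exists \emph{globally} with $u_t\in(0,\infty)$ and $f_t\in(0,1)$ for all $t\geq 0$. This is not automatic (the vector field $\mathbf{\Psi}$ need not be globally Lipschitz) and requires one-sided comparison bounds on $\Psi_c$ and $\Psi_d$; without it, the duality formula \eqref{dualsemigroup} could degenerate or fail in finite time, and the integrability hypothesis needed to apply Ethier--Kurtz's duality theorem [Thm.~4.11, p.~192, condition (4.50)] — the route the paper takes rather than a direct backward-Kolmogorov argument, since the latter presumes differentiability of the semigroup that is not yet available at this stage — could not be verified. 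Once these points are fixed, the remaining computation matching the generator against \eqref{eq:Psic}–\eqref{eq:Psid} term by term goes through as you describe.
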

\begin{rem}
The killing terms in \eqref{eq:genL} with parameters $\kappa$ and $\mathrm{k}$  can be interpreted as single jumps to the boundary $\infty$. Indeed for any $f\in \mathcal{D}$, 
\begin{center}
$-\kappa xf(x,\ell)=\kappa x\big(f(\infty,\ell)-f(x,\ell)\big) \text{ and } -\mathrm{k} \ell f(x,\ell)=\mathrm{k} \ell\big(f(x,\infty)-f(x,\ell)\big)$.
\end{center}
\end{rem}
\begin{proposition}\label{Fellerprop}
The semigroup $(P_t)$ of the process $\mathbf{X}$ satisfying \eqref{dualsemigroup} is Feller with absorbing state $\Delta$. For all $(q,r)\in (0,\infty)\times (0,1)$, set $f_{q,r}:(x,\ell)\mapsto e^{-qx}r^{\ell}$. The space \begin{equation}\label{eq:coreD} D:=\emph{Vect}\left\{f_{q,r}, q\in (0,\infty), r\in (0,1)\right\}, \end{equation} satisfies $P_tD\subset D$ and is a core for the generator $\mathscr{L}$ of $\mathbf{X}$. Moreover, $\mathbf{X}$ possesses the branching property. Specifically, its transition kernel $P_t\big((x,\ell),\cdot\big)$ satisfies:
\begin{equation}\label{branchingprop} \forall x,y\in [0,\infty),\ \forall n,m\in \mathbb{Z}_+, \qquad P_t\big((x+y,n+m),\cdot\big)=P_t\big((x,n),\cdot\big)\star P_t\big((y,m),\cdot\big),
\end{equation}
where $\star$ stands for the convolution of measures. \end{proposition}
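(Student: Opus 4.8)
The plan is to read off all three assertions from the transform identity \eqref{dualsemigroup}; throughout write $F_t(q,r)=(u_t(q,r),f_t(q,r))$ for the solution of \eqref{odeuf}, so that \eqref{dualsemigroup} reads $P_t f_{q,r}(x,\ell)=e^{-xu_t(q,r)}f_t(q,r)^{\ell}$. Everything rests on one keystone, which I would prove first: \emph{for $(q,r)\in(0,\infty)\times(0,1)$ the solution of \eqref{odeuf} is global and $F_t(q,r)\in(0,\infty)\times(0,1)$ for all $t\ge0$.} Two routes are available. The analytic one uses a priori bounds on $\mathbf\Psi=(\Psi_c,-\Psi_d)$: the integrability conditions \eqref{eq:integrabilitypirho} together with $0\le 1-e^{-z}\le z$ give $-\Psi_c(q,r)\le C(1+q)$ uniformly in $r\in[0,1]$ (the $\pi$-integral in \eqref{eq:Psic} being $\le\pi([1,\infty)\times\{0\})+\sum_{k\ge1}\pi(\mathbb{R}_+\times\{k\})<\infty$), so Grönwall forbids blow-up of $u_t$; the elementary sign facts $-\Psi_c(0,r)=\sum_{k\ge1}\int(1-r^k)\,\pi(\ddr y,\ddr k)+\kappa\ge0$, $\Psi_d(q,0)=d\ge0$ and $\Psi_d(q,r)\le d(1-r)$ (each term of \eqref{eq:Psid} being controlled via $e^{-qy}r^k\le1$), combined with uniqueness of the flow (which rules out crossing the constant solutions $f\equiv0$ when $d=0$ and $f\equiv1$ when $\Psi_d(\cdot,1)\equiv0$), confine the trajectory to a compact subrectangle of $(0,\infty)\times(0,1)$ on each finite interval; since $\mathbf\Psi$ is locally Lipschitz there (differentiation under the integral signs in \eqref{eq:Psic}--\eqref{eq:Psid} being justified by \eqref{eq:integrabilitypirho}), the solution is global. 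The probabilistic route instead notes that $e^{-xu_t(q,r)}f_t(q,r)^{n}=\mathbb{E}_{(x,n)}[e^{-qX_t}r^{L_t}\ind_{\mathbf{X}_t\in E}]$ lies in $(0,1)$ as soon as $\mathbb{P}_{(x,n)}(\mathbf{X}_t\in E)>0$ and $\mathbf{X}_t$ is not a.s. equal to $(0,0)$; evaluating at $(x,n)=(1,0)$ and $(0,1)$ yields all four inequalities. I expect this keystone — in particular the non-explosion of $u_t$ and the strict bounds $0<f_t<1$ — to be the main obstacle; the rest is soft.

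For the branching property, fix $t$. For $z\in E$ the probability measure $P_t(z,\cdot)$ on $E^{\Delta}=E\sqcup\{\Delta\}$ is determined by its restriction $m_z$ to $E$, a finite measure on $\mathbb{R}_+\times\mathbb{Z}_+$; and $m_z$ is determined by the double transform $(q,r)\mapsto\int e^{-qx}r^{\ell}\,m_z(\ddr x,\ddr\ell)$ (for fixed $q$, a power series in $r$ whose coefficients are injective Laplace transforms). By \eqref{dualsemigroup} this transform is multiplicative in $z$ under coordinatewise addition; since $\star$ on $E^{\Delta}$ sends any pair containing $\Delta$ to $\Delta$, the measures $P_t((x+y,n+m),\cdot)$ and $P_t((x,n),\cdot)\star P_t((y,m),\cdot)$ have the same restriction to $E$, hence coincide, which is \eqref{branchingprop}.

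For the Feller property I would observe first that $D$ of \eqref{eq:coreD} is a point-separating subalgebra of $C_0(E)$ vanishing nowhere ($f_{q,r}f_{q',r'}=f_{q+q',rr'}$), hence dense by Stone--Weierstrass. The keystone and \eqref{dualsemigroup} give $P_tf_{q,r}=f_{u_t(q,r),f_t(q,r)}\in D$, so $P_tD\subset D$; as $P_t$ is sub-Markovian ($\Delta$ absorbing by construction) it is a positive $\|\cdot\|_\infty$-contraction, and approximating an arbitrary $f\in C_0(E)$ uniformly by elements of $D$ shows $P_tf\in C_0(E)$. Strong continuity reduces, by density and $\|P_t\|\le1$, to $\|P_tf_{q,r}-f_{q,r}\|_\infty\le\sup_{x\ge0}|e^{-xu_t}-e^{-xq}|+\sup_{\ell\ge0}|f_t^{\ell}-r^{\ell}|\to0$ as $t\to0$, which follows from continuity of $F$ at $0$ (so $u_t\to q>0$, $f_t\to r<1$) together with $\sup_x|e^{-xa}-e^{-xb}|\le|a-b|/(e\min(a,b))$ and $\sup_\ell|a^{\ell}-b^{\ell}|\le|a-b|/(1-\max(a,b))^2$. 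Finally, for the core: $f_{q,r}\in\mathcal{D}$ (the decay in \eqref{eq:domainofgeneratorL} is immediate for $q>0$, $r<1$), and differentiating $t\mapsto f_{u_t(q,r),f_t(q,r)}$ at $t=0$ — legitimate since $(a,b)\mapsto f_{a,b}$ is $C^1$ into $C_0(E)$ on $(0,\infty)\times(0,1)$ and $F$ solves \eqref{odeuf} — puts $f_{q,r}$ in the domain of the generator of $(P_t)$, which by $\mathrm{MP}_{\mathbf{X}}(\mathscr{L},\mathcal{D})$ agrees there with $\mathscr{L}$. Thus $D$ is a dense, $P_t$-invariant subspace of the generator's domain, hence a core by the standard criterion for strongly continuous contraction semigroups.
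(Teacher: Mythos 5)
Your proposal is correct and follows essentially the paper's own route: the keystone you isolate is precisely the paper's Lemma~\ref{lem:uniquenessode} (global existence of $F_t(q,r)$ confined to $(0,\infty)\times(0,1)$, established by a priori linear-growth bounds plus local-Lipschitz and comparison arguments much as you sketch), the branching property is read off from multiplicativity of the joint Laplace--generating transform in \eqref{dualsemigroup}, and the Feller and core assertions come from Stone--Weierstrass density of $D$, the invariance $P_tD\subset D$, and the standard core criterion. One caveat worth noting: your alternative ``probabilistic route'' for the keystone would be circular in the paper's logical order, since \eqref{dualsemigroup} is itself derived \emph{from} Lemma~\ref{lem:uniquenessode} via the Ethier--Kurtz duality theorem, so only the analytic route serves as a first proof (and your strong-continuity step is marginally more self-contained than the paper's, which invokes the standard fact that a positive contraction semigroup on $C_0$ that converges pointwise at $0^+$ is automatically strongly continuous rather than exhibiting uniform bounds).
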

Theorem \ref{thm:branchingbitype} and Proposition \ref{Fellerprop} are established in Section \ref{sec:constructionbitype}. We call $\mathbf{\Psi}$ the \textit{joint branching mechanism} of $\mathbf{X}$.
\begin{rem}
Any $\mathbb{R}_+\times \mathbb{Z}_+$-valued Markov process  satisfying the branching property \eqref{branchingprop} and the following condition on its semigroup 
\begin{center} $t\mapsto \mathbb{E}_{(x,n)}[e^{-qX_t}r^{L_t}]$ is differentiable at $0$, \end{center} will fall into the class studied here. We do not enter into this study here and refer for instance to Gihman and Skorokhod's book \cite[Chapter V]{GS04}. 
\end{rem}

Theorem \ref{thm:branchingbitype} covers the two classical settings, namely the discrete-state branching processes and those in continuous-state space, as we now show. We recall some well-known facts about them. More background can be found in the books by Harris \cite{zbMATH02001586}, Kyprianou \cite{Kyprianoubook} and Li \cite{zbMATH07687769}. 
\vspace*{2mm}

\subsubsection{One-dimensional discrete-state branching processes.}
Let $\Psi_d$ as in \eqref{eq:Psid}, with $b\equiv 0$ and a measure $\rho$ shrinking to $\rho(\ddr y,\ddr k)=\delta_0(\ddr y)\rho_d(\ddr k)$, for some finite measure $\rho_d(\ddr k)$ on $\mathbb{N}$. In this case there is no dependence on the variable $q$ in $\Psi_d$ and the function $$\varphi: [0,1)\ni r\to \Psi_d(0,r)=\Psi_d(q,r)=\sum_{k\geq 1}(r^{k+1}-r)\mu(k)+d(1-r)-\mathrm{k}r,$$ is the mechanism of a continuous-time Markov branching process with reproduction measure $\mu:=d\delta_{-1}+\rho_d+\mathrm{k}\delta_{\infty}$. 
Furthermore in the case $\Psi_c\equiv 0$, the component $X$ in the process $(X,L)$ is degenerated to the constant process and $L$ is a classical discrete branching process with reproduction measure $\mu$. Notice that when $\mathrm{k}=0$, $$\varphi'(1-)=\sum_{k\in \mathbb{Z}_+\cup\{-1\}}k\mu(k)=\sum_{k\in \mathbb{N}}k\mu(k)-d\in (-\infty,\infty],$$
and $L$ is supercritical if and only if $\varphi'(1-)>0$. We say that $L$ is immortal if there is no death in its dynamics, i.e. $d\equiv 0$.
A necessary and sufficient condition for the process $L$ to be non-explosive is $\int^{1}\frac{\ddr x}{|\varphi(x)|}=\infty$, see \cite{Harris}.

\subsubsection{One-dimensional continuous-state branching processes}\label{sec:csbps} 
Let $\Psi_c$ as in \eqref{eq:Psic} with $\pi(\ddr y,\ddr k)=\delta_0(\ddr k)\nu(\ddr y)$, for a certain L\'evy measure $\nu$ on $(0,\infty)$, such that $\int_0^{\infty}(1\wedge y^2)\nu(\ddr y)<\infty$. In this case there is no dependence on the variable $r$ in $\Psi_c$, and the function \begin{equation}\label{eq:branchingmechanismpsi}
\psi:(0,\infty) \ni q\mapsto \Psi_c(q,1)=\Psi_c(q,r)=\frac{\sigma^2}{2} q^{2}-\gamma q+ \int_{0}^{\infty}\big(e^{-qy}-1+qy\ind_{(0,1)}(y)\big)\nu(\ddr y)-\kappa,
\end{equation}
is the branching mechanism of a CSBP with parameters $(\frac{\sigma^2}{2},\gamma,\nu, \kappa)$. In this case the process $L$ is \textit{autonomous} as its dynamics does not depend on the component $X$. 
\vspace*{1.5mm}
\\
Furthermore in the case $\Psi_d\equiv 0$, the component $L$ degenerates to a constant process and $X$ is a classical CSBP$(\psi)$.  When $\kappa=0$, we recall that $X$ is said to be supercritical if $\psi'(0+)\in [-\infty,0)$, critical if $\psi'(0+)=0$ and subcritical when $\psi'(0+)>0$. 
The CSBP $X$ is immortal, i.e. $X_t\underset{t\rightarrow \infty}{\longrightarrow} \infty$ a.s. if and only if $\psi\leq 0$, see e.g. \cite[Chapter 12]{Kyprianoubook}.

\vspace*{1.5mm}
Theorem \ref{thm:branchingbitype} and Proposition \ref{Fellerprop} applied in this special setting ensure the following facts:  the CSBP$(\psi)$, $X$, is a $[0,\infty]$-valued càdlàg Feller process, with $\infty$ as absorbing state and its generator is 
 
\begin{equation}\label{eq:G}
\mathcal{G}f(x):=\frac{\sigma^2}{2}xf''(x)+\gamma x f'(x)+x\int_{0}^{\infty}\big(f(x+y)-f(x)-yf'(x)\ind_{(0,1)}(y)\big)\nu(\ddr y)-\kappa xf(x),
\end{equation}
acting on
\begin{equation}\label{Dc}
\mathcal{D}_c:=\left\{f\in C^{2}_0: \underset{x\rightarrow \infty}{\lim} x\big(|f(x)|+|f'(x)|+|f''(x)|\big)=0\right\}.
\end{equation} 
Denote by $Q_t$ the semigroup associated with $X$. Setting $e_q(x):=e^{-qx}$ for all $q>0$ and $x\in [0,\infty)$, one has $\mathcal{G}e_q(x)=x\psi(q)e_q(x) \text{ and }$
\[Q_te_q(x)=e^{-xu_t(q)} \text{ with } \frac{\ddr }{\ddr t}u_t(q)=-\psi(u_t(q)), u_0(q)=q.\] 
Moreover, the space
\begin{equation}\label{Dexpoc}
D_c:=\mathrm{Vect}\{e_q(\cdot),q\in (0,\infty)\}
\end{equation}
is a core and the semigroup $(Q_t)$  uniquely satisfies the backward Kolmogorov equation
\[\forall f\in D_c,\quad \frac{\ddr }{\ddr t}Q_tf=\mathcal{G}Q_tf.\]
 When $\kappa=0$, we recall that $X$ is said to be supercritical if $\psi'(0+)<0$, critical if $\psi'(0+)=0$ and subcritical when $\psi'(0+)>0$. The CSBP $X$ is immortal, i.e. $X_t\underset{t\rightarrow \infty}{\longrightarrow} \infty$ a.s. if and only if $\psi\leq 0$, see e.g. \cite[Chapter 12]{Kyprianoubook}. A necessary and sufficient condition for the continuous-state process $X$ to be non-explosive is $\int_{0}\frac{\ddr x}{|\psi(x)|}=\infty$, see Grey \cite{zbMATH03471400}. 
\\

We now give natural conditions under which the first and second  coordinates are autonomous (discrete or continuous) branching processes.

\begin{proposition}[Explosion and autonomous coordinates]
\label{prop:autonomous}\

\begin{enumerate}
\item
In case $\pi(\ddr y,\ddr k)= \nu(\ddr y) \delta_0(\ddr k)$ for a measure  $\nu(\ddr y)$ on $(0,\infty)$ such that $\int_0^\infty (1\wedge y^2) \nu(\ddr y)<\infty$, and under the condition $\int_{0}\frac{\ddr q}{|\Psi_c(q,1)|}=\infty$, 
the coordinate $(L_t)$  is a discrete branching process with branching mechanism
$$\Psi_d(0,r)=\sum_{k \geq 1} (r^{k+1}-r) \rho(\R_+,\{k\}) +d(1-r)-\mathrm{k}r.$$ 
 
\item 

In case $\rho(\ddr y,\ddr k)= \delta_0(\ddr y) \rho_d(\ddr k)$ for a finite measure  $\rho_d(\ddr k)$ on $\Z_+$, $b=0$, and under the condition
$\int^1\frac{\ddr r}{|\Psi_d(0,r)|}=\infty$, the coordinate $(X_t)$ is a CBSP with branching mechanism 
\begin{equation*}
\Psi_c(q,1)=\frac{\sigma^2}{2} q^{2}-\gamma q+ \int_{0}^{\infty}\big(e^{-qy}-1+qy\ind_{(0,1)}(y)\big)\pi(\ddr y,\Z_+)-\kappa.
\end{equation*}
\end{enumerate}
\end{proposition}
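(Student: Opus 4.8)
The plan is to treat both items by the same mechanism: specializing the parameters decouples one coordinate of the ODE system \eqref{odeuf}, and the corresponding integral hypothesis is exactly a conservativeness criterion that permits pushing the spectral variable to the boundary of its range — $q\downarrow 0$ in item (1), $r\uparrow 1$ in item (2) — while the other coordinate stays under control; the branching property of $\mathbf{X}$ then upgrades the resulting one‑dimensional identity into an identification of the relevant coordinate process through its transition semigroup.

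For item (1), inserting $\pi(\ddr y,\ddr k)=\nu(\ddr y)\delta_0(\ddr k)$ into \eqref{eq:Psic} gives $\Psi_c(q,r)=\Psi_c(q,1)=:\psi(q)$, free of $r$, so the first line of \eqref{odeuf} reads $\frac{\ddr}{\ddr t}u_t(q,r)=-\psi(u_t(q,r))$: hence $u_t(q,r)=u_t(q)$ is independent of $r$ and $t\mapsto u_t(q)$ is the cumulant semigroup of a CSBP$(\psi)$ as in Section~\ref{sec:csbps}. I would then invoke Grey's criterion recalled there: since $\psi(0)=-\kappa$, the hypothesis $\int_0\ddr q/|\psi(q)|=\infty$ forces $\kappa=0$ (otherwise $|\psi|$ is bounded below near $0$ and the integral converges), and it makes the CSBP$(\psi)$ conservative, so that $u_t(0+):=\lim_{q\downarrow 0}u_t(q)$ satisfies $e^{-xu_t(0+)}=\mathbb{P}_x(X_t<\infty)=1$, i.e. $u_t(0+)=0$ for every $t\geq 0$. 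Setting $g_t(r):=\lim_{q\downarrow 0}f_t(q,r)$ (the limit exists by monotonicity in $q$), and using continuous dependence of the second equation in \eqref{odeuf} on the parameter $u_\cdot(q)\downarrow 0$, the function $g_t$ solves $\frac{\ddr}{\ddr t}g_t(r)=\Psi_d(0,g_t(r))$ with $g_0(r)=r$; and \eqref{eq:Psid} at $q=0$ gives $\Psi_d(0,r)=\sum_{k\geq 1}(r^{k+1}-r)\rho(\R_+,\{k\})+d(1-r)-\mathrm{k}r$, a discrete branching mechanism in the sense of Section~\ref{sec:preliminaries}.

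To identify the law of $(L_t)$, I would use the Markov property of $\mathbf{X}$ (Theorem~\ref{thm:branchingbitype}) at a time $s$, evaluate \eqref{dualsemigroup} at $\mathbf{X}_s$, and let $q\downarrow 0$ (conditional monotone convergence, using $e^{-qX_{t+s}}\uparrow\ind_{\{X_{t+s}<\infty\}}$ and, on $\{\mathbf{X}_s\neq\Delta\}$, $e^{-X_su_t(q)}\uparrow 1$), obtaining
\begin{equation*}
\mathbb{E}_{(x,n)}\!\left[r^{L_{t+s}}\ind_{\{\mathbf{X}_{t+s}\neq\Delta\}}\mid\mathcal{F}_s\right]=e^{-X_su_t(0+)}g_t(r)^{L_s}=g_t(r)^{L_s}.
\end{equation*}
As $r$ ranges over $(0,1)$, this expresses the conditional sub‑probability law of $L_{t+s}$ on $\Z_+$ as a function of $L_s$ alone, so $(L_t)$ — given the value $\infty$ on $\{\mathbf{X}_t=\Delta\}$ — is $(\mathcal{F}_t)$‑Markov with transition semigroup $(\ell,r)\mapsto g_t(r)^\ell$, which is precisely that of a discrete branching process with mechanism $\Psi_d(0,\cdot)$; moreover the defect $1-g_t(1-)^n=\mathbb{P}_{(x,n)}(\mathbf{X}_t=\Delta)$ does not involve $x$, so the continuous coordinate never drives $\mathbf{X}$ to the cemetery. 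Item (2) is symmetric: $\rho(\ddr y,\ddr k)=\delta_0(\ddr y)\rho_d(\ddr k)$ and $b=0$ (killing the $-bqr$ term) make $\Psi_d(q,r)=\Psi_d(0,r)=:\varphi(r)$ free of $q$, so $f_t(q,r)=f_t(r)$ solves $\frac{\ddr}{\ddr t}f_t(r)=\varphi(f_t(r))$ with $f_0(r)=r$; the hypothesis $\int^1\ddr r/|\varphi(r)|=\infty$ is Harris's conservativeness criterion for this discrete process, forcing $\mathrm{k}=0$ (as $\varphi(1)=-\mathrm{k}$) and $f_t(1-):=\lim_{r\uparrow 1}f_t(r)=1$ for all $t$; then $\tilde u_t(q):=\lim_{r\uparrow 1}u_t(q,r)$ solves $\frac{\ddr}{\ddr t}\tilde u_t(q)=-\Psi_c(\tilde u_t(q),1)$, and the same Markov‑property/monotone‑convergence argument (now with $r^{L_{t+s}}\uparrow\ind_{\{L_{t+s}<\infty\}}$ and $f_t(r)\uparrow 1$) yields $\mathbb{E}_{(x,n)}[e^{-qX_{t+s}}\ind_{\{\mathbf{X}_{t+s}\neq\Delta\}}\mid\mathcal{F}_s]=e^{-X_s\tilde u_t(q)}$, identifying $(X_t)$ as a CSBP with mechanism $\Psi_c(\cdot,1)$ as in the statement.

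I expect the main obstacle to be the implication ``integral hypothesis $\Rightarrow$ boundary behaviour of the decoupled flow'': one must argue rigorously that $\int_0\ddr q/|\Psi_c(q,1)|=\infty$ (resp. $\int^1\ddr r/|\Psi_d(0,r)|=\infty$) not only forces $\kappa=0$ (resp. $\mathrm{k}=0$) but also yields $u_t(0+)=0$ (resp. $f_t(1-)=1$) for every $t$, and that the limiting equation for $g_t$ (resp. $\tilde u_t$) is well posed so that the interchanges of limits above are legitimate — precisely the point at which the conservativeness results of Grey \cite{zbMATH03471400} and Harris \cite{Harris} recalled in Section~\ref{sec:preliminaries} enter, now applied to the single decoupled coordinate of \eqref{odeuf}. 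The remaining ingredients — the conditional monotone convergence, and the fact that the functions $\ell\mapsto r^\ell$ ($r\in(0,1)$), resp. $x\mapsto e^{-qx}$ ($q>0$), separate sub‑probability measures on $\Z_+$, resp. $\R_+$ — are routine.
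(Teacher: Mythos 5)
Your argument is correct and follows essentially the same route as the paper's proof: specialize the parameters so that $\Psi_c$ loses its $r$-dependence, invoke the integral (Grey/Osgood) criterion to force $u_t(0+,r)=0$, and conclude that the one-dimensional law of $L_t$ is governed by the decoupled o.d.e. for $f_t(0+,r)$ with mechanism $\Psi_d(0,\cdot)$. Your proposal adds a useful extra step that the paper leaves implicit — the Markov-property/monotone-convergence computation of the conditional law of $L_{t+s}$ given $\mathcal{F}_s$, upgrading the one-dimensional identity to an identification of $(L_t)$ as a Markov branching process — but the underlying mechanism is the same.
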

The integral conditions in Proposition \ref{prop:autonomous} ensure that the autonomous coordinate does not explode, hence preventing the situation where the other coordinate stops evolving by being in the cemetery point. The proposition is proved at the end of Section \ref{sec:constructionbitype}.

\section{Main results}

\label{sec:mainresult}
Let $\psi$ be a branching mechanism with quadruplet by $(\sigma,\gamma,\nu,\kappa)$, see \eqref{eq:branchingmechanismpsi}. The largest root of $\psi$ is denoted by \[\rho=\psi^{-1}(0):=\sup\{x>0:\psi(x)<0\}\in [0,\infty].\] 

As explained in the Introduction, we shall see hereafter that the CSBP$(\psi)$ hides a family of  $\mathbb{R}_+\times \mathbb{Z}_+$-valued branching processes. 

For any $\lambda \in (0,\infty)$, the Esscher transform of $\psi$ at $\lambda$, is given by $\psi_{\lambda}(\cdot):=\psi(\lambda+\cdot)-\psi(\lambda)$, see Figure \ref{figure:Esscher}. This defines a new branching mechanism with no killing term, i.e. $\psi_{\lambda}(0+)=0$. Specifically $\psi_\lambda$ takes the explicit form
\begin{equation}\label{eq:esscher}
\psi_\lambda(q)=\frac{\sigma^2}{2}q^2+\psi'(\lambda)q+\int_{0}^\infty ye^{-\lambda y}\nu(\ddr y)\left(e^{-qy}-1+qy\right).
\end{equation}
We denote by $\mathcal{G}^{\psi_{\lambda}}$ the infinitesimal generator of the CSBP$(\psi_\lambda)$.
\\
\begin{figure}[h!]
\centering \noindent
\includegraphics[height=.20	 \textheight]{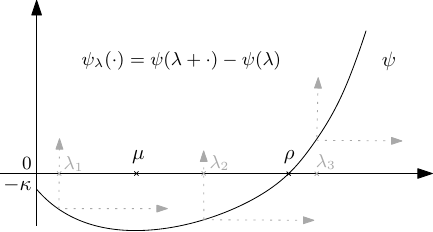}
\caption{A supercritical mechanism and its Esscher transforms}
\label{figure:Esscher}
\end{figure}

Define the Poisson kernel $K$: 
\begin{equation}\label{Poissonkernel}K(x,\ell):=e^{-\lambda x}\frac{(\lambda x)^{\ell}}{\ell !}, \forall x\in [0,\infty), \forall \ell \in \mathbb{Z}_+.
\end{equation}
For any function $f:(x,\ell)\mapsto f(x,\ell)$, belonging to $\mathcal{D}$, we set 
\[Kf(x):=\sum_{\ell=0}^{\infty}e^{-\lambda x}\frac{(\lambda x)^{\ell}}{\ell !}f(x,\ell),\text{ and }\mathcal{G}^{\psi_\lambda}f(x,l):=\mathcal{G}^{\psi_\lambda}f_\ell(x) \text{ with } f_{\ell}:x\mapsto f(x,l).\]

We will introduce an operator $\mathcal{H}$, acting on $\mathcal{D}$, that will satisfy an \textit{algebraic} intertwining relationship with the one-dimensional generator $\mathcal{G}$ via the kernel $K$.
\vspace*{2mm} 

To begin, we define an operator $\mathcal{J}$ that plays a central role. For any $f\in \mathcal{D}$ and $(x,\ell)\in \mathbb{R}_+\times \mathbb{Z}_+$, set
\begin{align}\label{J=massonnodes}\mathcal{J}f(x,\ell):= \ell\sum_{k \geq 0} \int_{(0,\infty)} &\big(f(x+y,\ell+k)-f(x,\ell)\big)\mathrm{s}(\ddr y,\ddr k) \nonumber \\
&+\frac{\sigma^2}{2}\lambda \ell[f(x,\ell+1)-f(x,\ell)]+\sigma^2\ell f'(x,\ell),
\end{align}
with
\begin{equation}\label{sdydk}
\mathrm{s}(\ddr y,\ddr k):=ye^{-\lambda y}\frac{(\lambda y)^{k}}{(k+1)!}\nu(\ddr y)\ind_{\mathbb{Z}_+}(k).\end{equation}

\begin{theo}[Algebraic intertwining]\label{thm1:main} 
Let $\mathcal{H}$ be the operator defined on $\mathcal{D}$ as follows:
\begin{itemize}
\item[i)] When $\lambda \geq \rho$, set
\begin{align}\label{Hi}
\mathcal{H}f(x,\ell):=-\kappa xf(x,\ell)+\mathcal{G}^{\psi_{\lambda}}f(x,\ell)+\mathcal{J}f(x,\ell)+ \ell \;  \frac{\psi(\lambda)}{\lambda} [f(x,\ell-1)-f(x,\ell)].
\end{align}
\item[ii)] When $\lambda \leq \rho$, set
\begin{align}\label{Hii}
\mathcal{H}f(x,\ell):=-\kappa xf(x,\ell)+\mathcal{G}^{\psi_{\lambda}}f(x,\ell)&+ \mathcal{J}f(x,\ell)- x\psi(\lambda)[f(x,\ell+1)-f(x,\ell)].
\end{align}
\end{itemize}
In both cases, one has \begin{equation}\label{eq:intertwiningHKG} K\mathcal{H}f(x)=\mathcal{G}Kf(x),\ \forall x\geq 0.
\end{equation}
\end{theo}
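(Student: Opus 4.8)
The plan is to verify the identity $K\mathcal{H}f(x)=\mathcal{G}Kf(x)$ by a direct, if somewhat lengthy, computation, splitting both sides according to the structural pieces of the operators. Since $D=\mathrm{Vect}\{f_{q,r}\}$ is a core (Proposition~\ref{Fellerprop}) and both $K\mathcal{H}$ and $\mathcal{G}K$ are linear in $f$ and continuous in an appropriate sense, it suffices to check \eqref{eq:intertwiningHKG} on the exponential test functions $f_{q,r}(x,\ell)=e^{-qx}r^{\ell}$ for $q\in(0,\infty)$, $r\in(0,1)$; this collapses all the sums over $\ell$ and $k$ into geometric/exponential generating functions and is the cleanest route. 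For such $f$ one computes $Kf_{q,r}(x)=\sum_\ell e^{-\lambda x}(\lambda x)^\ell/\ell!\, e^{-qx}r^\ell = e^{-x(q+\lambda(1-r))}=e_{Q}(x)$ with $Q:=q+\lambda(1-r)$, and then $\mathcal{G}Kf_{q,r}(x)=x\psi(Q)e_Q(x)$ using $\mathcal{G}e_Q(x)=x\psi(Q)e_Q(x)$ from Section~\ref{sec:csbps}.

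**Next I would** compute $K\mathcal{H}f_{q,r}(x)$ term by term. The operator $\mathcal{H}$ has four pieces: the killing $-\kappa x f$, the Esscher generator $\mathcal{G}^{\psi_\lambda}f$, the operator $\mathcal{J}f$, and the last (birth or death) term. Applying $\mathcal{G}^{\psi_\lambda}$ in the $x$-variable to $f_{q,r}(\cdot,\ell)=r^\ell e_q(\cdot)$ gives $r^\ell x\psi_\lambda(q)e_q(x)$, and after applying $K$ one gets $x\psi_\lambda(q)e_Q(x)$; the killing term contributes $-\kappa x e_Q(x)$. The term $\mathcal{J}f_{q,r}$ produces, after the $k$-sum against $\mathrm{s}(\ddr y,\ddr k)$, an expression of the form $\ell\, e^{-qx}r^{\ell-1}\,[\text{integral in }y]$ involving $\int_0^\infty y e^{-\lambda y}\nu(\ddr y)(\cdots)$; the key algebraic observation — this is the engine of the whole identity — is that summing $\ell r^{\ell-1}$ against the Poisson weights $e^{-\lambda x}(\lambda x)^\ell/\ell!$ yields $\lambda x\, e^{-\lambda x(1-r)}$, i.e. each factor of $\ell$ under $K$ turns into $\lambda x$ times a shift $r\to$ (nothing, but lowering the exponent count), so that $K(\ell\, g(x)r^{\ell-1})(x)=\lambda x\, e^{-\lambda x(1-r)}g(x)$. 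The Gaussian pieces $\frac{\sigma^2}{2}\lambda\ell[f(x,\ell+1)-f(x,\ell)]+\sigma^2\ell f'(x,\ell)$ should, under $K$, reconstruct exactly the cross-terms needed to upgrade $\psi_\lambda(q)$ to $\psi(Q)$ in the $\sigma^2$-part, via $\frac{\sigma^2}{2}(Q^2-q^2)=\frac{\sigma^2}{2}(q+\lambda(1-r))^2-\frac{\sigma^2}{2}q^2$; one must check this matches. Similarly the jump part of $\mathcal{J}$ must, under $K$ together with the jump part of $\psi_\lambda$, reassemble $\int_0^\infty (e^{-Qy}-1+Qy)\nu(\ddr y)$ out of $\int_0^\infty y e^{-\lambda y}(e^{-qy}r^k\cdots)\nu(\ddr y)$ — the combinatorial identity $\sum_{k\ge0}\frac{(\lambda y r)^k}{(k+1)!}=\frac{e^{\lambda y r}-1}{\lambda y}$ (or its analogue) is what makes the $\nu$-integrals collapse correctly.

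**The final bookkeeping step** is to collect everything and read off that $K\mathcal{H}f_{q,r}(x)$ equals $x e_Q(x)\big[\psi_\lambda(q)-\kappa + (\text{linear drift correction}) + (\text{birth/death term})\big]$ and to check that the bracket is exactly $\psi(Q)$. Here the two cases enter: since $\psi_\lambda(q)=\psi(\lambda+q)-\psi(\lambda)$ and $Q=q+\lambda(1-r)\ne \lambda+q$ in general (they agree only formally after the $\ell$-sums), the discrepancy $\psi(\lambda)$ and the mismatch between $\lambda$ and $\lambda(1-r)+\lambda$... more precisely, one finds a leftover term proportional to $\psi(\lambda)$ times either $\ell$ (handled by the death term $\ell\frac{\psi(\lambda)}{\lambda}[f(x,\ell-1)-f(x,\ell)]$, which under $K$ contributes $\psi(\lambda)x(1-r)e_Q(x)$ — wait, rather $-\psi(\lambda)\lambda x(1-r)\cdot$...) or $x$ (handled by $-x\psi(\lambda)[f(x,\ell+1)-f(x,\ell)]$, contributing $x\psi(\lambda)(r-1)e_Q(x)$ after $K$). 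One verifies that in case~i) ($\lambda\ge\rho$) the death term exactly cancels the leftover, and in case~ii) ($\lambda\le\rho$) the birth term does — the two formulas \eqref{Hi} and \eqref{Hii} produce the same $K\mathcal{H}$ because $[f(x,\ell-1)-f(x,\ell)]$ and $[f(x,\ell+1)-f(x,\ell)]$ have, after integration against the Poisson kernel, contributions that differ precisely by the sign and factor absorbed into $\psi(\lambda)/\lambda$ versus $\psi(\lambda)$. I expect \textbf{the main obstacle} to be this last reconciliation: carefully tracking which leftover terms are proportional to $x$ and which to $\ell$ (equivalently, to $\lambda x$ after $K$), and confirming that the seemingly different corrective terms in i) and ii) both repair the identity — this requires being scrupulous about the identity $Kf(x,\ell\pm1)$ versus $x\partial_x$ acting on $Kf$, i.e. that $K$ intertwines the discrete shift $\ell\mapsto\ell-1$ (weighted by $\ell$) with multiplication-type operations, which is exactly the Poissonian structure being exploited. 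Everything else is routine generating-function manipulation.
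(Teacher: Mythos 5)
Your plan attacks the theorem by a genuinely different route than the paper: you propose to check the identity on the exponential test functions $f_{q,r}(x,\ell)=e^{-qx}r^{\ell}$, using $Kf_{q,r}=e_{Q}$ with $Q=q+\lambda(1-r)$ so that both sides collapse to the scalar identity $\Psi_c(q,r)+\lambda\Psi_d(q,r)=\psi(Q)$, and you have correctly identified the central mechanisms (Poisson generating functions turning every factor $\ell r^{\ell-1}$ into $\lambda x$; the key combinatorial collapse $\sum_{k\geq0}(\lambda y r)^k/(k+1)!=(e^{\lambda yr}-1)/(\lambda yr)$ for the jump integrals; the fact that the death term in case~i) and the birth term in case~ii) produce the \emph{same} quantity after applying $K$ — this last observation is exactly the paper's Lemma~\ref{lem:KRf}). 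The paper instead proves the identity by a direct term-by-term expansion of $\mathcal{G}Kf$ for general $f\in\mathcal{D}$, splitting into the diffusive and pure-jump parts; it then derives $\Psi_c+\lambda\Psi_d=\psi(Q)$ as Corollary~\ref{cor:identifyjointbranchingmech} \emph{from} Theorem~\ref{thm1:main}. Your route inverts that dependency: verify the scalar identity directly from the definitions of $\Psi_c,\Psi_d$, and deduce the operator identity — which is leaner and arguably the more conceptual path.

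The gap is in your reduction step. You assert that because $D=\mathrm{Vect}\{f_{q,r}\}$ is a core and the two operators are ``continuous in an appropriate sense,'' agreement on $D$ extends to all of $\mathcal{D}$. That does not follow. The core property of $D$ is a statement about the closure of the semigroup generator; it does not license extending an \emph{algebraic} identity between two unbounded integro-differential operators from $D$ to the larger set $\mathcal{D}$. Concretely, $D$ is dense in $C_0(\mathbb{R}_+\times\mathbb{Z}_+)$ only for the sup norm, while $K\mathcal{H}$ and $\mathcal{G}K$ involve first and second $x$-derivatives, so $\|f_n-f\|_\infty\to 0$ with $f_n\in D$ gives no control over $K\mathcal{H}f_n-K\mathcal{H}f$ or $\mathcal{G}Kf_n-\mathcal{G}Kf$. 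To make your plan rigorous you would either need to (a) restrict the statement to $f\in D$ — which, it is worth noting, is all that is actually used downstream in the paper (Lemma~\ref{lem:intertwinedsemigroup} only invokes the algebraic identity for $P_tf$ with $f\in D$, relying on $P_tD\subset D$), or (b) supply an explicit approximation argument in a $C^2$-type topology in which $D$ is dense in $\mathcal{D}$, with corresponding continuity of both operators in that topology. As written, the density step is the one place where the argument would not go through, and it is precisely the step the paper avoids by doing the unglamorous direct computation for a general $f\in\mathcal{D}$.

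Two smaller points. First, your write-up is still in exploratory form (``wait, rather $-\psi(\lambda)\lambda x(1-r)\cdot$…''), so the final bookkeeping needs to be actually carried out; but the plan is sound and I have verified that the scalar identity does close up: the $\sigma^2$ pieces reassemble $\frac{\sigma^2}{2}Q^2-\gamma Q$, the $\nu$-integrals reassemble $\int(e^{-Qy}-1+Qy\ind_{(0,1)})\,\nu(\ddr y)$, and the leftover proportional to $\psi(\lambda)$ is cancelled by the birth/death term exactly as you predict. Second, be careful with the killing parameter $\kappa$: if one carries it through the identity $\Psi_c+\lambda\Psi_d=\psi(Q)$ with $\Psi_c=\psi_\lambda(q)-\kappa$ and the birth/death rate literally equal to $\psi(\lambda)/\lambda$ (resp.\ $-\psi(\lambda)$), one finds a mismatch of $\kappa(r-1)$; the identity closes if the rate is built from $\psi(\lambda)-\psi(0)=\psi(\lambda)+\kappa$. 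This is a bookkeeping subtlety already latent in the paper and worth flagging in your verification so it does not derail the ``final bookkeeping step'' you anticipate as the main obstacle.
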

The proof of Theorem \ref{thm1:main} is  provided in Section \ref{sec:proofmainthm1}. Notice that in case i), $\psi(\lambda)\geq 0$ while in case ii) $\psi(\lambda)\leq 0$. This ensures that the multiplicative factor preceding the last term in \eqref{Hi} and \eqref{Hii} is non-negative.

\vspace*{2mm}

The operator $\mathcal{H}$ given by \eqref{Hi}-\eqref{Hii} is the generator of a  $\mathbb{R}_+\times \mathbb{Z}_+$-valued branching process $\mathbf{X}:=(X^{\lambda},L^{\lambda})$. We  represent and explain further its dynamics in Figure \ref{figure:dessin}.  The jump measures $\rho$ and $\pi$, as they appear in the general form of the generator \eqref{eq:genL}, are given as follows
\vspace*{-2mm}
\begin{equation}\label{eq:rho}
\rho(\ddr y,\ddr k):=\mathrm{s}(\ddr y, \ddr k)+\frac{\sigma^2}{2}\lambda\delta_{0}(\ddr y)\delta_1(\ddr k)
\end{equation} and 
\vspace*{-2mm}
\begin{equation}\label{eq:pi}\pi(\ddr y, \ddr k):=\begin{cases}ye^{-\lambda y}\nu(\ddr y)\delta_0(\ddr k) & \text{ if } \lambda \geq \rho,\\
ye^{-\lambda y}\nu(\ddr y)\delta_0(\ddr k)-\psi(\lambda)\delta_{0}(\ddr y)\delta_1(\ddr k) & \text{ if } \lambda <\rho.\end{cases}
\end{equation}
The following corollary gives an analytic expression for the joint branching mechanism $\mathbf{\Psi}=(\Psi_c,-\Psi_d)$ of $(X^{\lambda},L^{\lambda})$, thereby elucidating its connection to $\psi$. 

\begin{cor}[Joint branching mechanism]\label{cor:identifyjointbranchingmech} For any $\lambda>0$, 

\begin{equation}\label{eq:relationship-psiPsi}
\psi\big(q+\lambda(1-r)\big)=\Psi_c(q,r)+\lambda\Psi_d(q,r), \ \forall (q,r)\in (0,\infty)\times (0,1).
\end{equation}
Moreover,
 
\begin{align}
i) \text{ When } \lambda\geq \rho: \quad \Psi_c(q,r)&=\psi_\lambda(q)-\kappa, \label{Psic-(i)}\\
\Psi_d(q,r)&=\sum_{k\in \mathbb{Z}_+}\int_{(0,\infty)}\!\!\left(e^{-qy}r^{k+1}-r\right)\rho(\ddr y,\ddr k)+\frac{\psi(\lambda)}{\lambda}(1-r)-\sigma^2qr \label{Psid-(i)}\\
&=\frac{\psi\left(q+\lambda(1-r)\right)-\psi(q+\lambda)+\psi(\lambda)+\kappa}{\lambda}. \label{Psid-(i)2}\\
ii) \text{ When } \lambda\leq \rho: \quad \Psi_c(q,r)&=\psi_{\lambda}(q)-\kappa-\psi(\lambda)(r-1), \label{Psic-(ii)}\\
\Psi_d(q,r)&=\sum_{k\in \mathbb{Z}_+}\int_{(0,\infty)}\!\!\left(e^{-qy}r^{k+1}-r\right)\rho(\ddr y,\ddr k)-\sigma^2qr \label{Psid-(ii)}\\
&=\frac{\psi\left(q+\lambda(1-r)\right)-\psi(q+\lambda)+r\psi(\lambda)+\kappa}{\lambda}.\label{Psid-(ii)2}
\end{align}
\end{cor}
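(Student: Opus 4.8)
The plan is to extract every statement from the algebraic intertwining \eqref{eq:intertwiningHKG} tested on the exponential functions $f_{q,r}\in D$, combined with the explicit shape of $\mathcal{H}$ in \eqref{Hi}--\eqref{Hii} and the fact (recorded just after Theorem~\ref{thm1:main}) that $\mathcal{H}$ is the generator of $(X^\lambda,L^\lambda)$, whose transform solves \eqref{dualsemigroup}--\eqref{odeuf}.

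\emph{Step 1 (the identity \eqref{eq:relationship-psiPsi}).} First I would observe that a direct Poisson computation gives $Kf_{q,r}(x)=e^{-qx}\sum_{\ell\geq 0}e^{-\lambda x}(\lambda x)^\ell r^\ell/\ell!=e^{-(q+\lambda(1-r))x}=e_{q+\lambda(1-r)}(x)$, hence $\mathcal{G}Kf_{q,r}(x)=x\,\psi\big(q+\lambda(1-r)\big)\,e_{q+\lambda(1-r)}(x)$ by $\mathcal{G}e_s(x)=x\psi(s)e_s(x)$. On the other hand, differentiating \eqref{dualsemigroup} at $t=0$ and reading off from \eqref{odeuf} that $\frac{\ddr}{\ddr t}u_t|_{0}=-\Psi_c(q,r)$ and $\frac{\ddr}{\ddr t}f_t|_{0}=\Psi_d(q,r)$ yields $\mathcal{H}f_{q,r}(x,\ell)=\big(x\,\Psi_c(q,r)+\ell\, r^{-1}\Psi_d(q,r)\big)f_{q,r}(x,\ell)$. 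Applying $K$ and using $\sum_{\ell\geq 0}\ell\, e^{-\lambda x}(\lambda x)^\ell r^\ell/\ell!=\lambda x r\,e^{-\lambda x(1-r)}$ gives $K\mathcal{H}f_{q,r}(x)=x\big(\Psi_c(q,r)+\lambda\Psi_d(q,r)\big)e_{q+\lambda(1-r)}(x)$. Equating this with $\mathcal{G}Kf_{q,r}(x)$ through \eqref{eq:intertwiningHKG} and dividing by $x\,e_{q+\lambda(1-r)}(x)$ (legitimate for $x>0$) produces \eqref{eq:relationship-psiPsi}.

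\emph{Step 2 (the formulas for $\Psi_c$ and $\Psi_d$).} Next I would identify $\Psi_c$ by evaluating $\mathcal{H}f_{q,r}$ at $\ell=0$, where the operator $\mathcal{J}$ and the one-unit death term vanish (both carry a factor $\ell$): in case i) this leaves $\mathcal{H}f_{q,r}(x,0)=\mathcal{G}^{\psi_\lambda}e_q(x)-\kappa x e_q(x)=x(\psi_\lambda(q)-\kappa)e_q(x)$, and in case ii) the additional term $-x\psi(\lambda)[f_{q,r}(x,1)-f_{q,r}(x,0)]=-x\psi(\lambda)(r-1)e_q(x)$; comparison with $\mathcal{H}f_{q,r}(x,0)=x\,\Psi_c(q,r)e_q(x)$ from Step~1 gives \eqref{Psic-(i)} and \eqref{Psic-(ii)}. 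The closed forms \eqref{Psid-(i)2} and \eqref{Psid-(ii)2} then follow from \eqref{eq:relationship-psiPsi}, rewritten as $\Psi_d(q,r)=\lambda^{-1}\big(\psi(q+\lambda(1-r))-\Psi_c(q,r)\big)$, after substituting $\psi_\lambda(q)=\psi(q+\lambda)-\psi(\lambda)$. The series representations \eqref{Psid-(i)} and \eqref{Psid-(ii)} are obtained by inserting into the general expression \eqref{eq:Psid} the parameters read off from \eqref{Hi}--\eqref{Hii}, namely $b=\sigma^2$, $\mathrm{k}=0$, $\rho$ as in \eqref{eq:rho} and $d=\psi(\lambda)/\lambda\geq 0$ in case i), $d=0$ in case ii).

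\emph{Main obstacle.} Everything above is essentially substitution, except for the one genuinely computational point: checking that the series and closed forms coincide. There one uses the elementary identity $\sum_{k\geq 0}a^k/(k+1)!=(e^{a}-1)/a$ with $a=\lambda y r$ and $a=\lambda y$ to evaluate $\sum_{k\geq 0}\int_{(0,\infty)}(e^{-qy}r^{k+1}-r)\,\mathrm{s}(\ddr y,\ddr k)$ as $\lambda^{-1}\int_{(0,\infty)}\big(e^{-(q+\lambda(1-r))y}-e^{-(q+\lambda)y}-r(1-e^{-\lambda y})\big)\nu(\ddr y)$, and then must reorganise the linear compensator terms — the truncation $qy\ind_{(0,1)}(y)$ hidden in $\psi$ and the $\psi'(\lambda)q$ term in $\psi_\lambda$, see \eqref{eq:esscher} — so that the diffusive, drift, death and atomic pieces telescope into \eqref{Psid-(i)2}, resp. \eqref{Psid-(ii)2}. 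The integrability conditions \eqref{eq:integrabilitypirho} are exactly what make the individual integrals, or the relevant combinations, converge; this bookkeeping of compensators is the only step requiring real care.
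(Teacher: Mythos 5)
Your proposal is correct and follows essentially the same route as the paper: establish the algebraic identity \eqref{eq:relationship-psiPsi} by applying the intertwining \eqref{eq:intertwiningHKG} to $f_{q,r}$ together with $Kf_{q,r}=e_{q+\lambda(1-r)}$, read off $\Psi_c$ from the generator, and deduce the closed forms for $\Psi_d$. Two small stylistic differences: you obtain the key evaluation $\mathcal{H}f_{q,r}(x,\ell)=\big(x\Psi_c(q,r)+\ell r^{-1}\Psi_d(q,r)\big)f_{q,r}(x,\ell)$ by differentiating the semigroup representation \eqref{dualsemigroup} at $t=0$, whereas the paper invokes its algebraic-duality lemma \eqref{eq:Lfqr} (proved earlier by direct computation on $\mathscr{L}$) — both are fine; and your $\ell=0$ trick for isolating $\Psi_c$ is a neat shortcut for the same parameter reading the paper does.

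The one point worth correcting is the ``main obstacle'' you flag at the end. There is in fact nothing to check. The series expressions \eqref{Psid-(i)}/\eqref{Psid-(ii)} are what $\Psi_d$ \emph{is} by Theorem~\ref{thm:branchingbitype}, once you have identified the parameters $(b,d,\mathrm{k},\rho)$ of the generator $\mathcal{H}$ and plugged them into \eqref{eq:Psid}; and the closed forms \eqref{Psid-(i)2}/\eqref{Psid-(ii)2} are obtained by solving \eqref{eq:relationship-psiPsi} for $\Psi_d$ once $\Psi_c$ is known. Both describe the $\Psi_d$ of the same generator, so they coincide automatically; no reconciliation via $\sum_{k\geq 0}a^k/(k+1)!=(e^a-1)/a$ is needed. (That kind of bookkeeping of compensators is genuinely required in the proof of Theorem~\ref{thm1:main} itself, which is where the real work of the paper lives — but the corollary just harvests it.) Aside from this overcautious sanity check, the argument is sound and complete.
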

The proof of Corollary \ref{cor:identifyjointbranchingmech} is in Section \ref{sec:proofprop1}.
\begin{rem}
Notice that when $\rho\in (0,\infty)$, the map $\lambda\mapsto (\Psi_c,\Psi_d)$ is continuous (for the uniform norm) at $\lambda=\rho$, since $\psi$ is continuous and $\psi(\rho)=0$.
\end{rem} 
For any $\lambda>0$ and $x\in [0,\infty)$, denote by $\mathrm{Poi}(\lambda x)$ the Poisson law with parameter $\lambda x$ and notice that $K(x,\ell)=\mathbb{P}(\mathrm{Poi}(\lambda x)=\ell)$ for all $\ell \in \mathbb{Z}_+$.  Let $(P_t)$ be the semigroup of the branching process $(X^{\lambda},L^{\lambda})$ with mechanism $(\Psi_c,-\Psi_d)$, and $(Q_t)$ be the semigroup of a CSBP$(\psi)$. Call $(\mathcal{F}^{X^{\lambda}}_t)$ the usual augmentation of the natural filtration of $X^{\lambda}$, see e.g. Revuz-Yor's book \cite[pages 45 and 93]{zbMATH02150787}. 

\begin{theo}[Intertwined semigroups and skeleton decomposition]\label{thm:main}
For any function $f\in C_0(\mathbb{R}_+\times \mathbb{Z}_+)$, one has

\begin{equation}\label{eq:intertwinedsemigroups}
\Lambda P_tf(x)=Q_t\Lambda f(x),\quad \forall t,x\geq 0,\end{equation}
where for all $x\in \bar{\mathbb{R}}_+$ and $\ell\in \bar{\mathbb{Z}}_+$,
\begin{equation}
\label{eq:defLambda}
\Lambda\big(x,(x,\ell)\big):=K(x,\ell)
\ind_{\{x<\infty\}}+\delta_{\Delta}\big(x,\ell)\ind_{\{x=\infty\}}.
\end{equation}

Moreover, if $(X^{\lambda},L^{\lambda})$ has for initial law $X^{\lambda}_0=x$ and $L^{\lambda}_0\overset{\text{law}}{=}\mathrm{Poi}(\lambda x)$, then the following holds:
\begin{enumerate}
\item The process $X^{\lambda}$ is a CSBP$(\psi)$ started from $x$.

\item For all $\ell\in \mathbb{Z}_+$ and any $(\mathcal{F}^{X^{\lambda}}_t)$-stopping time $\tau$,  \[\mathbb{P}(L^{\lambda}_{\tau}=\ell|\mathcal{F}^{X^{\lambda}}_{\tau})=K(X^{\lambda}_{\tau},\ell) \text{ a.s.} \text{ on the set } \{X_\tau< \infty\}. \]
\end{enumerate}
\end{theo}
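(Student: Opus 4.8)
The plan is to deduce Theorem~\ref{thm:main} from the algebraic intertwining of Theorem~\ref{thm1:main} in two stages: first lift it to an intertwining of semigroups \eqref{eq:intertwinedsemigroups}, then apply the Pitman--Rogers/Markov mapping machinery (Theorem~\ref{thmRG}) to obtain the probabilistic statements (1) and (2).

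First I would establish \eqref{eq:intertwinedsemigroups}. By Theorem~\ref{thm1:main} we have $K\mathcal{H}f=\mathcal{G}Kf$ on $\mathcal{D}$, and by construction (via \eqref{eq:rho}--\eqref{eq:pi} and Theorem~\ref{thm:branchingbitype}) $\mathcal{H}=\mathscr{L}$ is the generator of $(X^\lambda,L^\lambda)$ with core $D=\mathrm{Vect}\{f_{q,r}\}$, while $\mathcal{G}$ has core $D_c=\mathrm{Vect}\{e_q\}$. The key observation is that $K$ maps the core $D$ into $D_c$: indeed $Kf_{q,r}(x)=\sum_\ell e^{-\lambda x}\frac{(\lambda x)^\ell}{\ell!}e^{-qx}r^\ell=e^{-qx}e^{-\lambda x(1-r)}=e_{q+\lambda(1-r)}(x)\in D_c$. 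So I would take $f\in D$, write $u\mapsto P_uf$, and compute $\frac{\ddr}{\ddr u}Q_{t-u}KP_uf = -\mathcal{G}Q_{t-u}KP_uf + Q_{t-u}K\mathcal{H}P_uf = Q_{t-u}(K\mathcal{H}-\mathcal{G}K)P_uf = 0$, using that $P_uf\in D$ (since $P_tD\subset D$ by Proposition~\ref{Fellerprop}), that $K\mathcal{H}=\mathcal{G}K$ there, and the backward Kolmogorov equations on the respective cores. Integrating from $u=0$ to $u=t$ gives $KP_tf=Q_tKf$ for $f\in D$; then extend to all $f\in C_0(\mathbb{R}_+\times\mathbb{Z}_+)$ by the fact that $D$ is dense in $C_0$ (Stone--Weierstrass) together with the Feller continuity of both semigroups, and finally reconcile $K$ with $\Lambda$ by checking the boundary behaviour at $x=\infty$, i.e. that $x\to\infty$ forces $\mathrm{Poi}(\lambda x)$-mass to escape to $\Delta$, which is exactly the $\ind_{\{x=\infty\}}\delta_\Delta$ term in \eqref{eq:defLambda} (this uses the simultaneous-explosion/absorption statement and that $\Delta$ is absorbing for $\mathbf{X}$).

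Second, with the semigroup intertwining $\Lambda P_t = Q_t\Lambda$ in hand and the defining relation $\Lambda(x,\cdot)=\mathrm{Poi}(\lambda x)\otimes\delta$-on-the-fibre, I would invoke the Pitman--Rogers theorem (Theorem~\ref{thmRG}) with the deterministic ``link'' kernel $\Lambda$. Its hypotheses are: $\Lambda$ is a Markov kernel from $\bar{\mathbb{R}}_+$ to $E^\Delta$ (clear, since $\sum_\ell K(x,\ell)=1$); the intertwining $\Lambda P_t = Q_t\Lambda$ (just proved); and the consistency of initial laws, namely that $(X^\lambda_0,L^\lambda_0)$ has law $\delta_x\,\Lambda = \mathrm{Poi}(\lambda x)$ on the second coordinate — which is precisely the assumed initial condition $X^\lambda_0=x$, $L^\lambda_0\overset{\text{law}}{=}\mathrm{Poi}(\lambda x)$. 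The conclusion of Theorem~\ref{thmRG} then yields simultaneously that the $\Lambda$-projection $\pi_1(X^\lambda,L^\lambda)=X^\lambda$ is Markov with semigroup $Q_t$ and initial law $\delta_x$ — i.e. statement (1), that $X^\lambda$ is a CSBP$(\psi)$ from $x$ — and the conditional law identity $\mathbb{P}(L^\lambda_t=\ell\mid\mathcal{F}^{X^\lambda}_t)=K(X^\lambda_t,\ell)$, which upgrades to arbitrary $(\mathcal{F}^{X^\lambda}_t)$-stopping times $\tau$ on $\{X_\tau<\infty\}$ by the strong Markov property of $\mathbf{X}$ (Theorem~\ref{thm:branchingbitype}) and an optional-sampling/monotone-class argument — statement (2).

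I expect the main obstacle to be the first stage, specifically the rigorous passage from the \emph{algebraic} identity $K\mathcal{H}f=\mathcal{G}Kf$ on the core to the \emph{semigroup} identity, and the careful handling of the cemetery state. The differentiation argument $\frac{\ddr}{\ddr u}Q_{t-u}KP_uf=0$ needs the two one-sided Kolmogorov equations to hold in a compatible sense and needs $KP_uf$ to stay in a set where $\mathcal{G}$ acts as a genuine generator — this is where $P_tD\subset D$ and $KD\subset D_c$ (both noted above, the latter by the explicit computation $Kf_{q,r}=e_{q+\lambda(1-r)}$) are essential, and one must control uniformity in $u\in[0,t]$ to justify differentiating under $Q_{t-u}$. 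The boundary bookkeeping — making sure that no mass is ``lost'' when $X^\lambda$ hits $\infty$ and that this matches the $\delta_\Delta$ part of $\Lambda$ — is the other delicate point; it relies on $\Delta=\{x+\ell=\infty\}$ being absorbing and on the compatibility of explosion times, which is where Proposition~\ref{prop:explosion} enters. Everything else (Stone--Weierstrass density, Feller continuity to extend from $D$ to $C_0$, strong Markov extension to stopping times) is routine.
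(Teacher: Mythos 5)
Your overall architecture is the same as the paper's: lift the algebraic identity $K\mathcal{H}f=\mathcal{G}Kf$ of Theorem~\ref{thm1:main} to the semigroup level by working on the exponential cores, then feed the resulting $\Lambda P_t=Q_t\Lambda$ into Pitman--Rogers (Theorem~\ref{thmRG}). The interpolation trick $\frac{\ddr}{\ddr u}Q_{t-u}KP_uf=0$ is a legitimate variant of the paper's argument, which instead observes that $t\mapsto KP_tf$ and $t\mapsto Q_tKf$ solve the same backward Kolmogorov equation and invokes uniqueness; both need exactly the same ingredients ($P_tD\subset D$, $KD\subset D_c$, uniform differentiability) and are equivalent. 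The Pitman--Rogers step, the identification $\phi=\pi_1$, and the upgrade to stopping times via Feller/strong Markov are all in line with the paper (the stopping-time upgrade is in fact built into the paper's statement of Theorem~\ref{thmRG}-(2), so you need not rederive it).

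There is, however, a genuine circularity in your handling of the boundary. You propose to reconcile $K$ with the full kernel $\Lambda$ at $x=\infty$ by invoking ``the simultaneous-explosion/absorption statement,'' which you explicitly identify with Proposition~\ref{prop:explosion}. But Proposition~\ref{prop:explosion} is itself \emph{derived from} Theorem~\ref{thm:main} (its proof in Section~\ref{sec:proofprop2} begins from the semigroup identity \eqref{eq:KPt1}, a consequence of \eqref{intertwinedsemigroupfqr}), so using it to establish $\Lambda P_t=Q_t\Lambda$ would be circular. It is also unnecessary: the passage from $KP_t=Q_tK$ on $[0,\infty)$ to $\Lambda P_t=Q_t\Lambda$ on $\bar{\mathbb{R}}_+$ is a purely deterministic bookkeeping step. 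Writing out $\Lambda P_tf(x)=\ind_{[0,\infty)}(x)KP_tf(x)+\ind_{\{\infty\}}(x)P_tf(\Delta)$ and $Q_t\Lambda f(x)=\ind_{[0,\infty)}(x)Q_tKf(x)+\ind_{\{\infty\}}(x)f(\Delta)$, the only extra facts needed are that $\Delta$ is absorbing for $(P_t)$ (so $P_tf(\Delta)=f(\Delta)$) and that $\infty$ is absorbing for the CSBP semigroup $(Q_t)$; both of these are already established in Theorem~\ref{thm:branchingbitype}/Proposition~\ref{Fellerprop} and Section~\ref{sec:csbps}, well before Theorem~\ref{thm:main}. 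Replacing your appeal to Proposition~\ref{prop:explosion} with this direct check removes the circularity and brings your proof into full agreement with the paper's Lemma~\ref{lem:intertwinedsemigroup-Lambda}.
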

Theorem \ref{thm:main} is established in  Section \ref{sec:proofmainthm} and relies on Pitman-Rogers theorem, 
see Theorem \ref{thmRG}.
\vspace*{1mm}
\\ 
Let us provide a more detailed explanation of the statement of Theorem \ref{thm:main}. Consider first the case of a supercritical branching mechanism $\psi$, i.e. $\psi'(0+)<0$, with no killing ($\kappa=0$) and with $\rho\in (0,\infty)$. Regarding the event of asymptotic extension, one has:
\vspace*{-2mm}
\begin{center}
$\mathbb{P}_x(X_t\underset{t\rightarrow \infty}{\longrightarrow}0)=e^{-x\rho}=\P(L^{\rho}_0=0)\in (0,1)$,
\end{center} 
\vspace*{-3mm}
see e.g. \cite[Theorem 12.7]{Kyprianoubook} for the first equality. In the case $\lambda=\rho$, the process $(X^{\lambda},L^{\lambda})$ coincides with the process studied in \cite[pages 722-723]{BF08}. It is a branching process with two types: the \textit{prolific} individuals, which are represented along the discrete component $(L^\rho_t,t\geq 0)$ and the non-prolific ones, evolving in $\mathbb{R}_+$. Since $\psi(\rho)=0$, the discrete branching process $(L^\rho_t,t\geq 0)$ has no death term, see \eqref{Psid-(i)}, and is thus immortal. The event of having no prolific individuals in the CSBP, $\{L_0^\rho=0\}$, also matches with the event of asymptotic extinction. Furthermore, the continuous dynamics is governed by $\psi_\rho$, which is the mechanism of a CSBP$(\psi)$ conditioned on its extinction, see \cite[Exercice 12.4]{Kyprianoubook}. Finally, Theorem~\ref{thm:main} asserts that the projection onto the continuous component results in a CSBP$(\psi)$.

More generally, for any branching mechanism $\psi$ such that $\rho\in [0,\infty)$, when $\lambda\geq \rho$, case i), the branching mechanism $\psi_\lambda$ is \textit{subcritical}, see Figure \ref{figure:Esscher}. The CSBPs$(\psi_\lambda)$, with generator $\mathcal{G}^{\psi_\lambda}$, are thus converging towards $0$ almost surely. Notice that $\Psi_c$, see \eqref{Psic-(i)} does not depend on the variable $r$. Thus, the discrete component $L^{\lambda}$ is autonomous. In particular, conditionally on $\{L^{\lambda}_0=0\}$, $L_t^{\lambda}=0$ for all $t\geq 0$ a.s. and  $X^\lambda$ is a CSBP($\psi_\lambda$) started from $x$.

We now explain the dynamics of the process $\mathbf{X}=(X^{\lambda},L^{\lambda})$ for both scenarios: $\lambda>\rho$ and  $\lambda<\rho$, see Figure \ref{figure:dessin}.   We call \textit{skeleton}, the discrete component $L^{\lambda}$, and refer to the process $\mathbf{X}$ as a skeleton decomposition.

\begin{figure}[h!]
\centering \noindent
\includegraphics[height=.30	 \textheight]{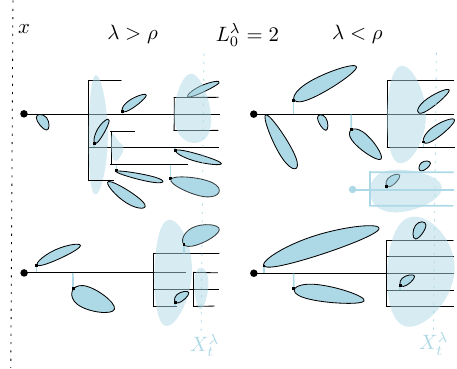}
\caption{Schematic representation of the intertwined two-type branching process}
\label{figure:dessin}
\end{figure}

The skeleton is depicted by lines, while the blue parts represent the continuous mass. The bubbles encircled in black represent CSBPs($\psi_\lambda$) that are grafted along each line. The CSBP starting from a black square starts from a macroscopic mass governed by the Lévy measure\footnote{One has $\int_0^1y\,\mathrm{s}(\ddr y,\{0\})<\infty$} $\mathrm{s}(\ddr y,\{0\})=ye^{-\lambda y}\nu(\ddr y)$. The bubbles that stick to the skeleton only exist if $\sigma>0$. They arrive on each line at rate $\sigma^2$ and represent CSBPs started from an infinitesimal mass \footnote{A rigorous formalisation requires to work with the canonical measure, see e.g. \cite{zbMATH05598047}. We shall not need it in our construction here}. 

At any splitting time of the skeleton, say into $k\geq 1$ new lines, a continuous mass, governed by $\mathrm{s}(\ddr y,\{k\})$, is spread and starts evolving as a CSBP($\psi_\lambda$). This is depicted by the blue shadow behind the lines. In the case $\psi(0+)=-\kappa<0$, the continuous mass is furthermore killed at rate $\kappa x$. In other words, we graft CSBPs with mechanism $\psi_\lambda-\kappa$. In any case, the skeleton and the grafted CSBPs accomodate in a way so that the first coordinate process $X^{\lambda}$ keeps the law of a CSBP$(\psi)$.

\begin{itemize}
\item[i)] When $\lambda>\rho$, a death term, with rate $\frac{\psi(\lambda)}{\lambda}$, in the skeleton $L^{\lambda}$ emerges, see \eqref{Psid-(i)}. The latter has therefore leaves as represented in Figure \ref{figure:dessin}.  Heuristically, since $\psi_\lambda'(0+)=\psi'(\lambda)$ and $\psi(\lambda)/\lambda$ increase on $[\rho,\infty)$, the greater $\lambda$ is, the more subcritical are the CSBP$(\psi_\lambda)$ and the more leaves has the skeleton.

\vspace*{1mm}

\item[ii)] When $\lambda<\rho$, case ii), the dynamics changes as the continuous mass $X^{\lambda}$ also begets discrete individuals. It is acting on the skeleton through an additional linear birth term with positive rate $-\psi(\lambda)$. In this setting, the skeleton is immortal (it has no leaf) and is not autonomous, as $\Psi_c$ depends on the variable $r$, see \eqref{Psic-(ii)}. 

Let $\mu:=\text{argmin}{\psi}\in [0,\infty]$ be the location of the minimum of $\psi$, see Figure \ref{figure:Esscher}. The branching mechanism $\psi_\lambda$ is subcritical, critical or supercritical when respectively $\lambda>\mu$, $\lambda=\mu$ and $\lambda<\mu$. 
When $\lambda<\mu$, not only does the continuous mass generate new lines, but the grafted CSBPs must also be supercritical to ensure that the total continuous mass process $X^{\lambda}$ retains the law of the CSBP$(\psi)$.

\end{itemize}  
Notice that when the CSBP$(\psi)$ is \textit{immortal}, i.e. it tends to $\infty$ a.s., one has $\rho=\mu=\infty$ and only the decomposition (ii), with supercritical grafted CSBPs, makes sense. A classical example in this framework is the stable mechanism $\psi(\lambda)=-c\lambda^{\alpha}$ with $\alpha\in (0,1), c>0$.
\begin{rem}[Conditional law of $X^\lambda$ given  $L^{\lambda}$] Let $n\geq 0$ and let $A$ be a Borel set. Since conditionally on $X_t^{\lambda}$,  $L_t^{\lambda}\overset{\text{law}}{=} \mathrm{Poi}(\lambda X_t^{\lambda})$, we see  that:
\[\mathbb{P}(X^{\lambda}_t\in A|L_t^{\lambda}=n)=\frac{\mathbb{P}\big(X_t^{\lambda}\in A,L^{\lambda}_t=n\big)}{\mathbb{P}(L_t^{\lambda}=n)}=\frac{\mathbb{E}\big[e^{-\lambda X_t^{\lambda}}(X^{\lambda}_t)^n \ind_A(X^{\lambda}_t)\big]}{\mathbb{E}\big[e^{-\lambda X_t^{\lambda}}(X^{\lambda}_t)^n\big]}.\]

\end{rem}

\begin{rem}[A one-dimensional intertwining] 
The identities \eqref{eq:intertwiningHKG} and \eqref{eq:intertwinedsemigroups} encapsulate an intertwining relationship between the CSBP and its skeleton alone. Let $G^{\lambda}$ and $Q^{\lambda}_t$ denote the infinitesimal generator and the semigroup of the process $L^{\lambda}$. For any bounded function $f:\mathbb{Z}_+\mapsto \mathbb{R}_+$, set $Kf(x)=\sum_{\ell \geq 0}f(\ell)\frac{(\lambda x)^{\ell}}{\ell!}e^{-\lambda x}$, one has for all $x\geq 0$ and $t\geq 0$,
\[KG^{\lambda}f(x)=\mathcal{G} Kf (x) \text{ and } KQ_t^{\lambda}f(x)=Q_tK f(x).\]
We emphasize that these identities alone do not establish any direct coupling between the processes governed by the semigroups $Q_t^{\lambda}$ and $Q_t$. In general, finding such a coupling is challenging - see the pioneering article \cite{zbMATH04192817} and for instance \cite{zbMATH07206399}. Within the framework of Theorem \ref{thm:main}, the intertwining relationship is directly related to a function (the projection on the first coordinate) and Pitman-Rogers theorem, see Theorem \ref{thmRG}, applies.
\end{rem}

As an application of Theorem \ref{thm1:main}, we now study the phenomenon of explosion of a CSBP. 

Notice that for all $\lambda\in (0,\infty)$, $\psi_\lambda'(0+)=\psi'(\lambda)\in (-\infty,\infty)$, the grafted CSBPs have thus finite mean. In particular, none is exploding. In the case $\kappa=0$, if the CSBP$(\psi)$ can explode, i.e. $\int_{0}\frac{\ddr x}{|\psi(x)|}<\infty$, see e.g. \cite[Theorem 12.3]{Kyprianoubook},  its explosion occurs simultaneously with any of its skeleton $L^{\lambda}$. Heuristically, the only way to reach infinity by accumulation of large jumps is to graft CSBPs on an infinite number of lines. This is stated in the next proposition whose proof will appeal Theorem \ref{thm:main}.

\begin{prop}[Simultaneous explosion]
\label{prop:explosion} If $(X^{\lambda},L^{\lambda})$ has for initial law $X^{\lambda}_0=x$ and $L^{\lambda}_0\overset{\text{law}}{=}\mathrm{Poi}(\lambda x)$, then the following identity of events holds a.s. 
\[\{L_t^{\lambda}=\infty\}=\{X_t^{\lambda}=\infty\}, \text{ for all } t\geq 0.\]
In particular, when $\int_{0}\frac{\ddr x}{|\Psi(x)|}<\infty$, explosion has positive probability and occurs simultaneously in the CSBP($\psi$), $X^{\lambda}$, and  the skeleton $L^{\lambda}$. 
\end{prop}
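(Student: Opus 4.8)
The plan is to exploit the intertwining of Theorem~\ref{thm:main}, together with the explicit dynamics of $(X^\lambda,L^\lambda)$, to transfer the well-known simultaneous-explosion property of one-dimensional CSBPs across the two components. First I would fix the initial law as in the statement, so that by Theorem~\ref{thm:main} the projection $X^\lambda$ is a genuine CSBP$(\psi)$ and, for every deterministic $t\geq 0$, $\mathbb{P}(L^\lambda_t=\ell\mid \mathcal{F}^{X^\lambda}_t)=K(X^\lambda_t,\ell)$ a.s.\ on $\{X^\lambda_t<\infty\}$. The inclusion $\{L^\lambda_t=\infty\}\subset\{X^\lambda_t=\infty\}$ is essentially free: on $\{X^\lambda_t<\infty\}$ the conditional law of $L^\lambda_t$ is $\mathrm{Poi}(\lambda X^\lambda_t)$, which puts no mass at $\infty$, so $\mathbb{P}(L^\lambda_t=\infty,\,X^\lambda_t<\infty)=\mathbb{E}[K(X^\lambda_t,\infty)\ind_{\{X^\lambda_t<\infty\}}]=0$. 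Taking a countable dense set of times and using right-continuity of both coordinates (recall $\mathbf{X}$ is càdlàg with cemetery $\Delta=\{x+\ell=\infty\}$, so $X^\lambda_t=\infty\Leftrightarrow\mathbf{X}_t=\Delta\Leftrightarrow L^\lambda_t=\infty$ once we establish the reverse inclusion) upgrades this to ``for all $t$'' simultaneously.

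For the reverse inclusion $\{X^\lambda_t=\infty\}\subset\{L^\lambda_t=\infty\}$, the cleanest route is to observe that $\infty$ (the state $\Delta$) is absorbing for the joint process, and that once $X^\lambda$ hits $\infty$ the whole process is at $\Delta$, hence $L^\lambda$ is also at $\infty$: if $X^\lambda_{t}=\infty$ then $\mathbf{X}_t=\Delta$ by definition of $E^\Delta$, so $L^\lambda_t=\infty$ as well. Thus in fact $\{X^\lambda_t=\infty\}=\{L^\lambda_t=\infty\}=\{\mathbf{X}_t=\Delta\}$ as events, for every fixed $t$, and by càdlàg-ness of $\mathbf{X}$ and the fact that $\Delta$ is absorbing, the identity holds simultaneously for all $t\geq 0$ a.s.: indeed $\{X^\lambda_t=\infty\text{ for all }t\geq\sigma\}$ where $\sigma=\inf\{s: \mathbf{X}_s=\Delta\}$, and likewise for $L^\lambda$, so both explosion times coincide with $\sigma$. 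I should be careful to phrase this using the one-point compactification $E^\Delta$ exactly as set up before Theorem~\ref{thm:branchingbitype}, so that ``$X^\lambda_t=\infty$'', ``$L^\lambda_t=\infty$'' and ``$\mathbf{X}_t=\Delta$'' are literally the same event; the subtlety is only that a priori one component could be finite while the joint process is at $\Delta$, which is excluded by the very definition $\Delta=\{(x,\ell):x+\ell=\infty\}$ being a single point — there are no partial-infinity states in $E^\Delta$. So the genuine content is the first paragraph: that $X^\lambda$ finite forces $L^\lambda$ finite, which comes from the Poissonian conditional law.

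For the second assertion, I would invoke the characterisation of explosion for CSBPs: when $\kappa=0$, the CSBP$(\psi)$ explodes with positive probability iff $\int_0\frac{\ddr x}{|\psi(x)|}<\infty$, see \cite[Theorem 12.3]{Kyprianoubook} (note the statement writes $\Psi$ for $\psi$). Since by item~1 of Theorem~\ref{thm:main} $X^\lambda$ is exactly a CSBP$(\psi)$, this gives $\mathbb{P}(X^\lambda_t=\infty)>0$ for $t$ large under the integrability condition, and then the identity of events just proved forces $\mathbb{P}(L^\lambda_t=\infty)>0$ with the two events coinciding, so explosion occurs simultaneously in both. The main (and only real) obstacle is making the first-paragraph argument fully rigorous for all $t$ simultaneously rather than for a fixed $t$: one must argue that the explosion time $T_\infty:=\inf\{t:X^\lambda_t=\infty\}=\inf\{t:L^\lambda_t=\infty\}$ by combining the fixed-$t$ identity over rationals with the a.s.\ right-continuity (and the absorption at $\Delta$) of $\mathbf{X}$; nothing deep is involved, but the bookkeeping around the cemetery state and the filtration augmentation $(\mathcal{F}^{X^\lambda}_t)$ should be handled with care.
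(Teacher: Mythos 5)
Your reverse inclusion $\{X^\lambda_t = \infty\} \subset \{L^\lambda_t = \infty\}$ is in fact the hard direction, and your argument for it is circular. You declare it trivial because $\Delta$ is a single point of the one-point compactification $E^\Delta$, so that ``$X^\lambda_t=\infty$'', ``$L^\lambda_t=\infty$'' and ``$\mathbf{X}_t=\Delta$'' are ``literally the same event''. But if that were the right reading, your first inclusion would be equally trivial: $\{L^\lambda_t=\infty,\;X^\lambda_t<\infty\}$ would just be $\{\mathbf{X}_t=\Delta,\;\mathbf{X}_t\neq\Delta\}=\emptyset$, and the Poisson conditional-law argument you invoke would be superfluous. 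That internal inconsistency is the warning sign. The real content of the proposition — which the paper's proof makes explicit by working with $\zeta_c=\inf\{t:X^\lambda_{t-}=\infty\}$ and $\zeta_d=\inf\{t:L^\lambda_{t-}=\infty\}$ — is that \emph{both coordinate processes actually diverge} at the joint explosion time $\zeta$, i.e.\ $X^\lambda_{\zeta-}=\infty$ \emph{and} $L^\lambda_{\zeta-}=\infty$ a.s.\ on $\{\zeta<\infty\}$. The direction you call trivial is exactly the assertion $L^\lambda_{\zeta-}=\infty$, and this needs a genuine argument.

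The paper proves it via the passage times $\zeta_n^+:=\inf\{t:X^\lambda_t>n\}$, which increase strictly to $\zeta_c$ because $\kappa=0$ precludes a single jump to infinity. Applying Theorem~\ref{thm:main}-(2) with the \emph{stopping time} $\zeta_n^+$, one gets that conditionally on $X^\lambda_{\zeta_n^+}$, the variable $L^\lambda_{\zeta_n^+}$ is $\mathrm{Poi}(\lambda X^\lambda_{\zeta_n^+})$ with $X^\lambda_{\zeta_n^+}>n$, hence $\mathbb{P}(L^\lambda_{\zeta_n^+}\le\ell\mid X^\lambda_{\zeta_n^+})\le 2^\ell e^{-n\lambda/2}$; letting $n\to\infty$ yields $\mathbb{P}(L^\lambda_{\zeta_c-}\le\ell,\;\zeta_c<\infty)=0$ for all $\ell$. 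Note that the fixed-time Poisson conditional law, which you use, \emph{cannot} give this: it only tells you about $L^\lambda_t$ on the event $\{X^\lambda_t<\infty\}$, which excludes precisely the event $\{X^\lambda_t=\infty\}$ you need to control. The passage to stopping times together with the limit $n\to\infty$ is essential, and your proof has no analogue of it — this is the gap. (Your first inclusion — $X^\lambda_t$ finite forces $L^\lambda_t$ finite, giving $\zeta_c\le\zeta_d$ — is fine; the paper instead extracts the identity $\zeta=\zeta_c$ from the intertwining \eqref{intertwinedsemigroupfqr} applied to $\ind_{\mathbb{R}_+\times\mathbb{Z}_+}$, but either route works for that part.)
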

\begin{rem}
Notice that the event of simultaneous extinction of both types is not almost sure, indeed for any $\lambda>0$, when $L_0^\lambda\overset{\text{law}}{=}\mathrm{Poi}(\lambda x)$ and $X^{\lambda}_0=x$, we have by Theorem \ref{thm:main}-(2): 
\begin{center}
$\mathbb{P}(L^{\lambda}_t=0)=\mathbb{E}[e^{-\lambda X^{\lambda}_t}]>\mathbb{P}(X^{\lambda}_t=0)$, $\forall t\geq 0$.
\end{center}
\end{rem}
The proof of Proposition \ref{prop:explosion} is in Section \ref{sec:proofprop2}.
\vspace*{2mm}

We study now the family of skeletons, with $\lambda\geq \rho$, and will establish that suitably renormalized, they converge in the sense of Skorokhod, towards a CSBP$(\psi)$, see Theorem \ref{thm:convergence} below. 

We start by providing explicitely the reproduction law of the autonomous skeletons. The following formula can be found in \cite{DW07,FFK19,zbMATH07940610} in the case $\kappa=0$. 
We will shed some light on the calculations behind. 

\begin{prop}[Offspring distribution of skeletons]\label{prop:mechanismskeleton} 
Assume $\lambda\geq \rho$. In this case, the skeleton $(L_t^{\lambda},t\geq 0)$ is autonomous and its branching mechanism is given by
\begin{equation}\label{varphi}\varphi_\lambda(r):=\Psi_d(0,r)=\frac{1}{\lambda}\Big(\psi\big(\lambda(1-r)\big)+\kappa\Big)=\psi'(\lambda)\sum_{k=-1}^{\infty}(r^{k+1}-r)p_k^{\lambda},
\end{equation}
with $(p_{k}^{\lambda},k\geq -1)$ the probability measure given by
\begin{equation}\label{pk} p_{-1}^{\lambda}=\frac{1}{\lambda\psi'(\lambda)}\psi(\lambda),\quad  p_0^{\lambda}=0,\quad p^{\lambda}_k=\frac{1}{\lambda\psi'(\lambda)}\left(\frac{\sigma^2\lambda^2}{2}\ind_{\{k=1\}}+\int_{0}^{\infty}\!\!e^{-\lambda y}\frac{(\lambda y)^{k+1}}{(k+1)!}\nu(\ddr y)\right), \forall
 k\geq 1.\end{equation}
\end{prop}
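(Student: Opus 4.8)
\textbf{Proof plan for Proposition \ref{prop:mechanismskeleton}.}

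The plan is to compute $\varphi_\lambda(r) = \Psi_d(0,r)$ starting from the identity \eqref{Psid-(i)2} of Corollary \ref{cor:identifyjointbranchingmech}, which in the autonomous case $\lambda \geq \rho$ reads $\Psi_d(q,r) = \frac{1}{\lambda}\big(\psi(q+\lambda(1-r)) - \psi(q+\lambda) + \psi(\lambda) + \kappa\big)$. Setting $q=0$ and using $\psi_\lambda(0+)=0$, i.e. $\psi(\lambda)$ cancels against $-\psi(\lambda)$ coming from $-\psi(0+\lambda)+\psi(\lambda)$, one is left with $\varphi_\lambda(r) = \frac{1}{\lambda}\big(\psi(\lambda(1-r)) + \kappa\big)$, which is the first displayed equality. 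The value at $r=1$ is $\frac{1}{\lambda}(\psi(0+)+\kappa) = \frac{1}{\lambda}(-\kappa+\kappa)=0$, consistent with $\varphi_\lambda$ being a branching mechanism with no killing; this is a good sanity check to record.

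Next I would expand $\psi(\lambda(1-r))$ using the explicit Lévy--Khintchine form \eqref{eq:branchingmechanismpsi} of $\psi$ evaluated at the argument $\lambda(1-r)$. Writing $s:=\lambda(1-r)$ so that $e^{-sy} = e^{-\lambda y}e^{\lambda r y}$ and expanding $e^{\lambda r y} = \sum_{j\geq 0} \frac{(\lambda r y)^j}{j!}$, the integral term $\int_0^\infty (e^{-sy}-1+sy\ind_{(0,1)}(y))\nu(\ddr y)$ should be reorganized as a power series in $r$. The constant ($r$-independent) part and the linear-in-$r$ part together with the $-\gamma s$ term and the quadratic $\frac{\sigma^2}{2}s^2 = \frac{\sigma^2\lambda^2}{2}(1-r)^2$ term must be collected so that, after dividing by $\lambda$, the whole thing takes the branching-mechanism normal form $\psi'(\lambda)\sum_{k\geq -1}(r^{k+1}-r)p_k^\lambda$. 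The key bookkeeping identity is that for a branching mechanism $\varphi$ with $\varphi(1)=0$ and slope $\varphi'(1-) = -\psi'(\lambda)$ (note the sign: $\frac{\ddr}{\ddr r}\psi(\lambda(1-r)) = -\lambda \psi'(\lambda(1-r))$, which at $r=1$ gives $-\lambda\psi'(0+) \cdot$; more carefully $\varphi_\lambda'(r) = -\psi'(\lambda(1-r))$, so $\varphi_\lambda'(1-) = -\psi'(0+)$ — but since we want the form with mean $\psi'(\lambda)$, the normalization is chosen so that $\varphi_\lambda(r) = \psi'(\lambda)(g(r)-r)$ with $g$ a probability generating function shifted by one, matching $\sum_k (r^{k+1}-r)p_k^\lambda$). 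I would then read off $p_{-1}^\lambda$ from the constant term $\frac{1}{\lambda}(\psi(\lambda) - \text{something})$... more directly: the coefficient structure forces $\psi'(\lambda) p_{-1}^\lambda \cdot 1 = \varphi_\lambda(0) = \frac{1}{\lambda}(\psi(\lambda)+\kappa)$, but $\kappa$ must be absorbed — here one uses that $\psi(\lambda) = \psi_\lambda(\lambda)\cdots$; cleaner is to note $\varphi_\lambda(0) = \frac{\psi(\lambda)}{\lambda}$ when $\kappa=0$ and handle $\kappa>0$ by the identity $\psi(\lambda)+\kappa = \frac{\sigma^2\lambda^2}{2} - \gamma\lambda + \int_0^\infty(e^{-\lambda y}-1+\lambda y\ind_{(0,1)})\nu(\ddr y) + \kappa - \kappa$, i.e. the $-\kappa$ in $\psi$ exactly cancels the $+\kappa$, leaving a quantity with no killing. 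Then matching powers of $r^{k+1}$ for $k\geq 1$ against $\frac{1}{\lambda}\big[\frac{\sigma^2\lambda^2}{2}(1-r)^2 + \int_0^\infty e^{-\lambda y}\sum_{j\geq 2}\frac{(\lambda r y)^j}{j!}\nu(\ddr y)\big]$ yields $\psi'(\lambda) p_k^\lambda = \frac{1}{\lambda}\big(\frac{\sigma^2\lambda^2}{2}\ind_{\{k=1\}} + \int_0^\infty e^{-\lambda y}\frac{(\lambda y)^{k+1}}{(k+1)!}\nu(\ddr y)\big)$, which is \eqref{pk}; and $p_0^\lambda = 0$ because there is no $r^1$ term on the right that is not of the form $(\text{const})\cdot(-r)$. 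Finally one checks $\sum_{k\geq -1} p_k^\lambda = 1$, equivalently that $\varphi_\lambda'(1-)/\psi'(\lambda) = \cdots$ — concretely $\sum_{k\geq -1} p_k^\lambda = 1$ follows from $\varphi_\lambda(r)/(\psi'(\lambda)) \to 0$ appropriately, or directly by summing the closed forms and recognizing $\frac{1}{\lambda\psi'(\lambda)}\big(\frac{\sigma^2\lambda^2}{2} + \int_0^\infty \lambda y e^{-\lambda y}\nu(\ddr y) + \psi(\lambda)\big)$... which equals $\frac{1}{\lambda\psi'(\lambda)}\cdot \lambda\psi'(\lambda) = 1$ since $\psi'(\lambda) = \sigma^2\lambda - \gamma + \int_0^\infty y(1 - e^{-\lambda y}\ind\cdots)\nu$ — I would verify this identity $\frac{\sigma^2\lambda^2}{2} + \int_0^\infty \lambda y e^{-\lambda y}\nu(\ddr y) + \psi(\lambda) + \kappa = \lambda\psi'(\lambda)$ by differentiating $\psi$, which is the last routine check.

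The main obstacle I anticipate is purely the careful power-series bookkeeping: correctly separating the linear-in-$r$ contributions (which belong to the $-r$ part of every term $(r^{k+1}-r)p_k^\lambda$ and must sum with the right weights) from the genuine $r^{k+1}$ contributions, and making sure the $\gamma$ and $\kappa$ and the truncation function $\ind_{(0,1)}(y)$ in the Lévy integral all cancel or get reabsorbed correctly so that $\varphi_\lambda$ comes out as a bona fide branching mechanism with $p_0^\lambda = 0$. A clean way to sidestep sign errors is to verify the final formula \eqref{varphi}--\eqref{pk} in reverse: plug the proposed $(p_k^\lambda)$ into $\psi'(\lambda)\sum_{k\geq -1}(r^{k+1}-r)p_k^\lambda$, resum using $\sum_{k\geq 1}\frac{(\lambda y r)^{k+1}}{(k+1)!} = e^{\lambda y r} - 1 - \lambda y r$, and check it equals $\frac{1}{\lambda}(\psi(\lambda(1-r))+\kappa)$; this reduces the proof to the single differentiation identity for $\psi'(\lambda)$ mentioned above and avoids any delicate interchange of sum and integral beyond Tonelli (justified since all terms in the relevant range are nonnegative).

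\findemo
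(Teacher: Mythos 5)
Your plan is essentially the paper's proof: both pass to $q\to 0$ in \eqref{Psid-(i)2} to get $\varphi_\lambda(r)=\frac{1}{\lambda}\big(\psi(\lambda(1-r))+\kappa\big)$, then read off the rates attached to $(r^{k+1}-r)$ directly from the integral expression \eqref{Psid-(i)} of $\Psi_d(0,r)$ (which already sits in the required sum form, so no power-series gymnastics are needed beyond $ye^{-\lambda y}\frac{(\lambda y)^k}{(k+1)!}=\frac{1}{\lambda}e^{-\lambda y}\frac{(\lambda y)^{k+1}}{(k+1)!}$), and finally check $\sum_{k\geq -1}p_k^\lambda=1$ by showing the total branching rate equals $\psi'(\lambda)$. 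The paper does that normalization by the resummation $\sum_{k\geq 1}\frac{(\lambda y)^{k+1}}{(k+1)!}=e^{\lambda y}-1-\lambda y$ and term-by-term comparison with the expansion of $\psi(\lambda)/\lambda$, which is exactly your proposed reverse verification in a different dress.

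Your hesitation about where $\kappa$ goes flags a real wrinkle and is worth resolving rather than waving away. At $r=0$ the first equality of \eqref{varphi} gives $\varphi_\lambda(0)=\frac{\psi(\lambda)+\kappa}{\lambda}$, while the stated $p_{-1}^\lambda$ gives $\psi'(\lambda)p_{-1}^\lambda=\frac{\psi(\lambda)}{\lambda}$; these match only when $\kappa=0$, and the paper's own normalization step uses the expansion \eqref{psilsurl} of $\psi(\lambda)/\lambda$ with the $-\kappa/\lambda$ contribution silently dropped. So either one reads the proposition with $\kappa=0$, or $p_{-1}^\lambda$ should carry a $+\kappa$, i.e.\ the death rate of the autonomous skeleton is $\big(\psi(\lambda)+\kappa\big)/\lambda$, the killing-free part of $\psi$ evaluated at $\lambda$ over $\lambda$. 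Your remark that ``the $-\kappa$ in $\psi$ exactly cancels the $+\kappa$'' is correct for $\varphi_\lambda(1)=0$, but it does not make $\kappa$ vanish from $\varphi_\lambda(0)$: track it through and state $p_{-1}^\lambda$ accordingly.
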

The proof of Proposition \ref{prop:mechanismskeleton} is in Section \ref{sec:proofprop:mechanismsskeleton}.
\begin{rem} Notice that $\varphi'_\lambda(1-)=-\psi'(0+)$. In particular, $L^{\lambda}$ is (sub)critical if and only if the CSBP($\psi$) is (sub)critical. Observe also that when $\lambda<\rho$, the formula \eqref{pk} cannot define an offspring distribution as at least one of the following holds $\psi'(\lambda)<0$ or $\psi(\lambda)<0$, which violates the condition $p_k\geq 0$ for all $k$. The formula \eqref{pk} is reminiscent to the coalescence rates of ancestral lineages in CSBPs backwards in time, see Foucart et al. \cite[Theorem 5.10]{zbMATH07142897} and Johnston and Lambert \cite{zbMATH07789648}.
\end{rem}
\begin{rem}[Binomial intertwining]
The family of continuous-time Galton-Watson processes $(L^\lambda,\lambda\geq\rho)$ satisfies an intertwining relationship with one another through a binomial kernel. This can be seen as a counterpart of the Bernoulli leaf coloring investigated in \cite[Section 4.1]{DW07}.  Recall $G^{\lambda}$ and $Q^{\lambda}_t$, the generator and semigroup of $L^{\lambda}$. For all $\mu>\lambda>\rho$ and any bounded function $f:\mathbb{Z}_+\rightarrow \mathbb{R}$, we have for all $n\geq 0$ and $t\geq 0$,
\[G^{\mu}B_{\lambda/\mu}f(n)=B_{\lambda/\mu}G^{\lambda}f(n) \text{ and } Q^{\mu}_tB_{\lambda/\mu}f(n)=B_{\lambda/\mu}Q_t^{\lambda}f(n)\]
with $B_{\lambda/\mu}f(n):=\sum_{k=0}^{n}\binom{n}{k}(\lambda/\mu)^k(1-\lambda/\mu)^{n-k}f(k)$.

\end{rem}
\vspace*{2mm}
The next theorem shows that for any branching mechanism $\psi$, the skeletons converge towards the CSBP$(\psi)$ in the Skorokhod sense.
\begin{theo}\label{thm:convergence} Let $x\in [0,\infty)$. Denote by $(X_t(x),t\geq 0)$ a CSBP$(\psi)$ starting from $x$, and  let $(L^{\lambda}_t(x),t\geq 0)$ be a discrete branching Markov process with mechanism $\varphi_\lambda$, see \eqref{varphi}, starting from an independent Poisson number of individuals with parameter $\lambda x$.  Then, as $\lambda$ go to $\infty$,  we have, 
\begin{equation}\label{eq:conv}\left(\frac{1}{\lambda}L^{\lambda}_t(x),t\geq 0\right)\overset{\mathbbm{D}}{\Longrightarrow} (X_t(x),t\geq 0).\end{equation}
\end{theo}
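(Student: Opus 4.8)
The plan is to prove convergence of the generators on a core and then invoke a standard weak-convergence theorem for Feller (or Markov) processes. First I would identify the generator of the rescaled skeleton $Y^\lambda := \lambda^{-1} L^\lambda$: since $L^\lambda$ is a continuous-time branching Markov chain with mechanism $\varphi_\lambda$, for $e_q(x)=e^{-qx}$ one computes directly that $\mathbb{E}_n[e^{-q L^\lambda_t}]$ solves the branching ODE driven by $\varphi_\lambda$, hence the Laplace functional of $Y^\lambda$ started from $\lfloor \lambda x\rfloor$-type data satisfies $\mathbb{E}[e^{-q Y^\lambda_t}] = e^{-x v^\lambda_t(q)}$ (up to the Poissonization of the initial condition, which turns $r^n$ at $r=e^{-q/\lambda}$ into $e^{-\lambda x(1-e^{-q/\lambda})}$, i.e. $\to e^{-qx}$), with $\frac{\ddr}{\ddr t} v^\lambda_t(q) = -\psi^{(\lambda)}(v^\lambda_t(q))$ where $\psi^{(\lambda)}(q) := \lambda\,\varphi_\lambda(e^{-q/\lambda}) = \psi\bigl(\lambda(1-e^{-q/\lambda})\bigr)+\kappa$ by \eqref{varphi}. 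The key analytic observation is then elementary: as $\lambda\to\infty$, $\lambda(1-e^{-q/\lambda})\to q$, so $\psi^{(\lambda)}(q)\to \psi(q)+\kappa$; but wait — one must be careful about the killing term: inspecting \eqref{varphi}, $\varphi_\lambda(r)=\frac1\lambda(\psi(\lambda(1-r))+\kappa)$, and since we want convergence to $\psi$ (not $\psi+\kappa$) the $\kappa$ must be accounted for by recalling that $\psi(q)=\Psi_c(q,1)$ already contains $-\kappa$; more precisely $\psi^{(\lambda)}(q)=\psi(\lambda(1-e^{-q/\lambda}))+\kappa\to\psi(q)+\kappa$, and since $\psi(q)+\kappa$ is the branching mechanism of the CSBP killed-part removed — so the cleanest route is to track the pair (Laplace exponent, cumulative killing) or, equivalently, work on the one-point compactified state space so that the limiting process is exactly CSBP$(\psi)$ including its explosion/killing behaviour.

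The concrete steps I would carry out, in order: (1) write the generator $G^{(\lambda)}$ of $Y^\lambda$ explicitly on exponential functions $e_q$, namely $G^{(\lambda)} e_q(x) = x\,\psi^{(\lambda)}(q)\,e_q(x) + (\text{error from } x \mapsto \lfloor\cdot\rfloor)$, using the discrete branching structure and the substitution $r=e^{-q/\lambda}$; (2) show $\psi^{(\lambda)}(q)\to \psi(q)$ uniformly on compact $q$-subsets of $(0,\infty)$ together with the corresponding convergence of the Lévy triplet pieces (the drift $\psi'(\lambda)\cdot\lambda^{-1}$-type terms, the Gaussian part $\sigma^2\lambda^2/2$ rescaled, and the jump part $e^{-\lambda y}(\lambda y)^{k+1}/(k+1)!\,\nu(\ddr y)$ pushed forward under $k\mapsto k/\lambda$) converging to the CSBP$(\psi)$ ingredients — this is a law-of-large-numbers/Poisson-approximation computation: a jump of the skeleton of size $k$ contributes a jump of size $k/\lambda$ to $Y^\lambda$, and under $\mathrm{s}(\ddr y,\ddr k)$ the conditional law of $k$ given $y$ is Poisson$(\lambda y)$, so $k/\lambda\to y$; (3) conclude $G^{(\lambda)}f \to \mathcal{G}f$ uniformly for $f$ in the core $D_c=\mathrm{Vect}\{e_q\}$ of \eqref{Dexpoc}, with uniform control of $\|G^{(\lambda)}e_q\|_\infty$; (4) check the initial conditions converge, $\mathrm{Poi}(\lambda x)/\lambda \Rightarrow \delta_x$ (law of large numbers for Poisson); (5) apply the Trotter–Kurtz / Ethier–Kurtz convergence theorem (e.g. \cite[Chapter 4]{Kyprianoubook}-type statement, or Ethier–Kurtz Theorem 4.2.11 / 1.6.5) for convergence of Feller semigroups on a common core, which upgrades convergence of generators plus tightness to convergence in law in the Skorokhod space $D([0,\infty),[0,\infty])$.

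The main obstacle I anticipate is twofold. First, the floor/ceiling discrepancy in the initial type-count and, more seriously, the fact that $Y^\lambda$ lives on the lattice $\lambda^{-1}\mathbb{Z}_+$ while $X$ is genuinely $[0,\infty]$-valued: the semigroups act on different spaces, so one needs the ``bounded-pointwise'' version of the Trotter–Kurtz theorem (extended limits, in the Ethier–Kurtz sense) rather than a naive Banach-space statement — I would set this up via the restriction maps $\eta_\lambda: C_0([0,\infty])\to C_0(\lambda^{-1}\mathbb{Z}_+)$ and verify $\|\eta_\lambda \mathcal{G}f - G^{(\lambda)}\eta_\lambda f\|\to 0$. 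Second, handling the non-conservative / explosive cases uniformly: since $\psi$ may be negative (immortal CSBP) or satisfy $\int_0\ddr q/|\psi(q)|<\infty$ (explosive CSBP), the flow $v^\lambda_t(q)$ may blow up in finite time; the clean fix is to work throughout on the one-point compactification $[0,\infty]$ with $\infty$ absorbing (consistent with Proposition \ref{prop:explosion} and the Feller property in Proposition \ref{Fellerprop}), so that $e_q$ extended by $e_q(\infty)=0$ stays in the domain and the ODE comparison $v^\lambda_t(q)\to u_t(q)$ holds on $[0,\infty]$ including escape to $+\infty$; this requires a short continuity-of-flows argument for the ODE $\dot v = -\psi^{(\lambda)}(v)$ as $\psi^{(\lambda)}\to\psi$ locally uniformly, which gives $v^\lambda_t(q)\to u_t(q)$ for every $t$ where $u_t(q)$ is finite, and hence, by \eqref{dualsemigroup}-type identities and a standard argument, convergence of finite-dimensional distributions; tightness then follows from the branching-property moment estimates or from the generator bound in step (3).
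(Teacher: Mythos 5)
Your outline --- identify the Laplace functional, pass to the limit, invoke an Ethier--Kurtz criterion --- is correct and would lead to a proof, but it is heavier than the paper's argument, which rests on an algebraic identity that you never isolate. Setting $v_t^\lambda(q):=\lambda\bigl(1-f_t^\lambda(e^{-q/\lambda})\bigr)$ as you do, so that $\mathbf{E}_x[e^{-qL_t^\lambda/\lambda}]=e^{-xv_t^\lambda(q)}$ thanks to the Poisson initial law, one substitutes $f_t^\lambda(e^{-q/\lambda})=1-v_t^\lambda(q)/\lambda$ into $\dot f_t^\lambda=\varphi_\lambda(f_t^\lambda)$ and obtains
\[\frac{\ddr}{\ddr t}\,v_t^\lambda(q)=-\lambda\,\varphi_\lambda\bigl(1-v_t^\lambda(q)/\lambda\bigr),\]
and by \eqref{varphi} the map $q\mapsto\lambda\varphi_\lambda(1-q/\lambda)$ equals $\psi(q)+\kappa$ \emph{identically in $\lambda$}. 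So there is no $\lambda$-dependent vector field $\psi^{(\lambda)}$ to control as $\lambda\to\infty$: $v_t^\lambda$ is the exact CSBP flow $u_t$ started from the shifted datum $\lambda(1-e^{-q/\lambda})$, i.e. $v_t^\lambda(q)=u_t\bigl(\lambda(1-e^{-q/\lambda})\bigr)$. Pointwise convergence then follows from continuity of $u_t(\cdot)$, and the uniform-in-$x$ convergence needed to upgrade to $\sup$-norm semigroup convergence is the one-line estimate $\sup_{x\geq 0}|e^{-xa}-e^{-xb}|\leq |a-b|/(a\wedge b)$. This collapses your steps (1)--(3) --- generator identification on $e_q$, L\'evy-triplet bookkeeping, local-uniform convergence of $\psi^{(\lambda)}\to\psi$ and the continuity-of-flows argument --- into a single observation, and it replaces your step (5) (Trotter--Kurtz on a core) with the lighter semigroup-convergence criterion \cite[Theorem 2.5, p.~167]{EK}, applied after a density argument as in Lemma~\ref{lem:unifconvsemigroup}. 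Your concern about lattice-valued state spaces and restriction maps also evaporates: because the initial law is Poisson$(\lambda x)$ for \emph{every} $x\in[0,\infty]$, the rescaled skeleton's semigroup $\mathbf{Q}_t^{1/\lambda}$ is already a Markov kernel on the same compact space $[0,\infty]$ as $Q_t$, so no extended-limit formalism is needed. Two corrections to your setup worth recording: your $\psi^{(\lambda)}(q):=\lambda\varphi_\lambda(e^{-q/\lambda})$ is \emph{not} the vector field governing $v_t^\lambda$ --- the correct substitution is the affine one, $\lambda\varphi_\lambda(1-\cdot/\lambda)$, and that is precisely why the $\lambda$-dependence vanishes; and your instinct to track $\kappa$ carefully was sound, since $\lambda\varphi_\lambda(1-v/\lambda)=\psi(v)+\kappa$, whereas Lemma~\ref{lem:convonexpo} writes $\dot v_t^\lambda=-\psi(v_t^\lambda)$, so the displayed ODE identity is correct as stated only when $\kappa=0$.
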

Theorem \ref{thm:convergence} has been  established in \cite{zbMATH07107460} in the setting of non-immortal and non-explosive CSBPs, i.e. $\rho<\infty$ and $\int_{0}\frac{\ddr x}{|\psi(x)|}=\infty$ respectively. The main argument in \cite{zbMATH07107460} relies on a work of Helland \cite{helland1978continuity} about random time-change transformations. We will follow here a different approach  allowing us to treat all CSBPs. 
\vspace*{1mm}

The proofs of Proposition \ref{prop:mechanismskeleton} and Theorem \ref{thm:convergence} are given in Section \ref{sec:proofsonskeletons}.

\begin{rem} The fact that we considered the continuous-time Galton-Watson processes $L^{\lambda}$ with a Poisson number of initial individuals simplifies many calculations. Note that by Theorem \ref{thm:main}, $Z_t^{\lambda}$ has a Poisson law with parameter $\lambda X_t$ where $X$ is a CSBP$(\psi)$. The convergence of the one-dimensional law can therefore be seen as an application of the weak law of large numbers. The convergence in the finite-dimensional sense stays  true if one considers instead the processes $(L_t^{\lambda}/\lambda,t\geq 0)$ with $L_0^{\lambda}=[\lambda x]$. We refer the reader to \cite[pages 20-21]{LeGalllectures} for calculations in this direction.
\end{rem}
\section{Proofs of Theorem \ref{thm:branchingbitype} and Proposition \ref{Fellerprop}}\label{sec:constructionbitype}
In this section, we investigate the class of branching processes, potentially explosive, comprising one continuous and one discrete component. The construction is made in three steps. First, we establish the existence of a càdlàg solution to the martingale problem $\mathrm{MP}_{\mathbf{X}}(\mathscr{L},\mathcal{D})$ (at this stage the process is not yet known to be markovian). Second, we observe a duality relationship between the operator $\mathscr{L}$ and a system of o.d.es through the functions indexed by $(q,r)\in (0,\infty)\times (0,1)$ and defined by
\begin{equation}\label{eq:fqr}f_{q,r}(x,\ell):=e^{-qx}r^{\ell}, \text{ for all } (x,\ell) \in E=\mathbb{R}_+\times \mathbb{Z}.
\end{equation}
We show existence and uniqueness of a global solution to this system. 
Last, standard results, see for instance Ethier and Kurtz's book \cite[Chapter 4]{EK}, will apply and ensure that the duality holds true at the level of the one-dimensional laws, entailing uniqueness of the solution to the martingale problem and consequently the strong Markov property. Recall $\Delta=\{(x,\ell):x+\ell=\infty\}$ the cemetery point.
\begin{lem}\label{lem:existenceMP}  There exists a c\`adl\`ag solution $(\mathbf{X}_t)_{t\geq 0}=(X_t,L_t)_{t\geq 0}$ to the martingale problem associated to $(\mathscr{L},\mathcal{D})$, taking values in $E\cup \{\Delta\}$ and killed at the first explosion time, $\zeta:=\inf\{t>0: (X_{t-},L_{t-})=\Delta \text{ or } (X_t,L_t)=\Delta\}$. 
\end{lem}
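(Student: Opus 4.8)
The goal is to construct a càdlàg process solving the martingale problem $\mathrm{MP}_{\mathbf{X}}(\mathscr{L},\mathcal{D})$, allowing for explosion (a jump to $\Delta$). The strategy I would follow is the standard localization/patching argument: build the process piece-by-piece on bounded regions of $E=\mathbb{R}_+\times\mathbb{Z}_+$, then glue these pieces together, and finally kill the resulting process at the explosion time $\zeta$. The key point is that on a bounded region the generator $\mathscr{L}$ restricts (after truncating the big jumps) to a \emph{bounded} operator, so there the martingale problem is solved by a pure jump process with explicit bounded jump rates, whose existence is elementary (e.g. via a Poisson-clock construction, or by appealing to the general existence theory for martingale problems with bounded generators, cf.\ Ethier--Kurtz \cite[Chapter 4]{EK}).

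\textbf{Step 1: Localization.} For $n\ge 1$ let $E_n:=[0,n]\times\{0,1,\dots,n\}$ and consider the operator $\mathscr{L}_n$ obtained from $\mathscr{L}$ by (i) discarding all jumps that would leave $E_n$ or that are smaller than $1/n$ in the $y$-variable, equivalently integrating $\pi,\rho$ only over $\{y\ge 1/n\}$ and $k$ bounded, and (ii) compensating the drift accordingly. By the admissibility condition \eqref{eq:integrabilitypirho}, the total jump intensity of $\mathscr{L}_n$ out of any point of $E_n$ is finite and bounded on $E_n$; the drift and diffusion coefficients are bounded on $E_n$ as well. Hence $\mathscr{L}_n$ generates a nice Feller (indeed piecewise-deterministic, or for $\sigma>0$ a jump-diffusion) process $\mathbf{X}^n$ on $E_n$, which one may build directly: run the deterministic flow $\dot x=\gamma x$, $\dot\ell=0$ (plus Brownian part $\sigma\sqrt{x}$ in the $x$-coordinate when $\sigma>0$) between jumps, with jumps occurring at the prescribed state-dependent rates. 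This $\mathbf{X}^n$ solves the martingale problem for $(\mathscr{L}_n,\mathcal D)$.

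\textbf{Step 2: Consistency and patching.} Let $\tau_n:=\inf\{t:\mathbf{X}_t^n\notin E_{n-1}\ \text{or a suppressed jump "would have" occurred}\}$; more cleanly, couple the constructions $\mathbf{X}^n$ for all $n$ simultaneously on one probability space driven by a single Poisson random measure on $E\times\mathbb{R}_+\times[0,\infty)$ plus one Brownian motion, reading off jumps from the full measures $\pi,\rho$ but only those landing in the current local region; then $\mathbf{X}^{n+1}$ agrees with $\mathbf{X}^n$ up to the first exit time from $E_{n-1}$. The exit times increase; set $\zeta:=\sup_n\tau_n$ and define $\mathbf{X}_t$ to be the common value for $t<\zeta$, and $\mathbf{X}_t:=\Delta$ for $t\ge\zeta$. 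This gives a càdlàg $E^\Delta$-valued process, killed at $\zeta$, and $\zeta$ coincides with the first time $(X_{t-},L_{t-})=\Delta$ or $(X_t,L_t)=\Delta$ since reaching the region boundary for all $n$ forces $x+\ell\to\infty$.

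\textbf{Step 3: The martingale property.} For $f\in\mathcal{D}$ and each $n$, optional stopping gives that $f(\mathbf{X}_{t\wedge\tau_n})-\int_0^{t\wedge\tau_n}\mathscr{L}f(\mathbf{X}_s)\,\ddr s$ is a martingale, because on $[0,\tau_n)$ the process coincides with $\mathbf{X}^n$ and $\mathscr{L}f$ agrees with $\mathscr{L}_n f$ there up to terms controlled by the tails of $\pi,\rho$ and the decay built into $\mathcal{D}$ (the conditions $\lim_{x\to\infty}x(|f|+|f'|+|f''|)=0$ and $\lim_{\ell\to\infty}\ell(|f|+|f'|+|f''|)=0$ are exactly what make $\mathscr{L}f$ bounded on all of $E$ and make the small-jump compensator converge). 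Then let $n\to\infty$: since $f\in C_0$ and $\mathscr{L}f$ is bounded, dominated convergence passes the martingale property through to the stopped-at-$\zeta$ process, and the convention $\mathscr{L}f(\Delta)=0$, $f(\Delta)=0$ handles $t\ge\zeta$.

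\textbf{Main obstacle.} The delicate point is Step 3, checking that $\mathscr{L}f$ is genuinely bounded on $E$ for every $f\in\mathcal{D}$ and that the localized martingales converge, since $\mathscr{L}$ has unbounded coefficients ($\gamma x$, $\sigma^2 x/2$, $x\,\pi(\ddr y,\ddr k)$, $\ell\,\rho(\ddr y,\ddr k)$) multiplied by small quantities that must compensate. The small-jump term $x\int_{(0,1)}(f(x+y,\ell)-f(x,\ell)-yf'(x,\ell))\pi(\ddr y,\{0\})$ is controlled by $\tfrac12 x\|f''(\cdot,\ell)\|_\infty\int(1\wedge y^2)\pi(\ddr y,\{0\})$ near $y=0$ and by $x(\ldots)$ decaying for large $x$; the large-jump and $k\ge1$ terms are controlled by $x\cdot\sup\{|f(x+y,\ell+k)|+|f(x,\ell)|\}$ together with the finiteness of $\sum_{k\ge1}\int\pi(\ddr y,\ddr k)$ and the uniform decay $\sup_{z\ge x}z|f(z,\cdot)|\to 0$; similarly for $\rho$ using the $(1\wedge y)$-integrability. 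Carrying out these estimates carefully — and doing the analogous bookkeeping to verify $\mathbf{X}^n$ solves $\mathrm{MP}(\mathscr{L}_n,\mathcal D)$ — is the bulk of the work, but it is routine; no new idea is needed beyond the localization scheme.
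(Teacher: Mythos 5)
Your proposal follows a genuinely different route from the paper. You propose a constructive localization scheme: build the process on compacta $E_n$ via explicit jump/diffusion dynamics with truncated jumps and bounded rates, couple the family on a common probability space, glue up to exit times, and kill at $\zeta=\sup_n\tau_n$. The paper instead invokes the abstract existence theorem of Ethier--Kurtz \cite[Theorem 5.4, p.~199]{EK}, whose hypotheses are (i) $\mathcal{D}$ is dense in $C_0(\mathbb{R}_+\times\mathbb{Z}_+)$ --- verified via Stone--Weierstrass applied to $\mathrm{Vect}\{f_{q,r}\}$ --- (ii) $\mathscr{L}$ maps $\mathcal{D}$ into $C_0(\mathbb{R}_+\times\mathbb{Z}_+)$, and (iii) the positive maximum principle holds for $(\mathscr{L},\mathcal{D})$. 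Observe that the verification you single out as ``the delicate point'' in your Step 3, namely that $\mathscr{L}f$ is genuinely continuous and vanishing at $\Delta$ for every $f\in\mathcal{D}$, is precisely hypothesis (ii) and is therefore needed in both approaches; the extra ingredient the paper needs is only the positive maximum principle, whereas your approach requires the full bookkeeping of construction, coupling, and passage to the limit, which is substantially more work for the same conclusion (the Ethier--Kurtz theorem already produces a $E^\Delta$-valued solution killed at explosion, so no separate gluing argument is needed).

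There is also a concrete flaw in your Step 2 as written. If $\mathscr{L}_n$ is obtained by discarding jumps with $y<1/n$ and compensating the drift, then $\mathscr{L}_n$ and $\mathscr{L}_{n+1}$ do \emph{not} agree on $E_{n-1}$: they differ by the contribution of the jumps with $1/(n+1)\le y<1/n$, which affects not just the drift but also the fluctuation. Consequently $\mathbf{X}^{n+1}$ does not agree with $\mathbf{X}^n$ up to the exit time from $E_{n-1}$ under your coupling, and the consistency you assert fails. The standard fix is to keep the full small-jump measure in every $\mathscr{L}_n$ (so $\mathbf{X}^n$ has infinite jump activity on $E_n$ and must be built from a compensated Poisson integral rather than a Poisson clock), truncating only the large jumps and the exit from $E_n$. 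This is repairable but changes the character of the construction from ``elementary pure-jump dynamics'' to ``stochastic calculus with jumps,'' and it should be stated if you pursue this route.
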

\begin{proof} We apply here \cite[Theorem 5.4 page 199]{EK}. Recall the space of functions $\mathcal{D}$ defined in \eqref{eq:domainofgeneratorL}. Notice that by definition $C_0(\mathbb{R}_+\times \mathbb{Z})=\{f \text{ continuous s.t. } f(x,\ell)\rightarrow 0, \text{ as } (x,\ell)\rightarrow \Delta\}$. We must first check that $\mathscr{L}$ is a linear operator on $C_0(\mathbb{R}_+\times \mathbb{Z})$, see \cite[Page 8]{EK}. Plainly $\mathcal{D}\subset C_0(\mathbb{R}_+\times \mathbb{Z})$ and we only have to verify that the range of $\mathscr{L}$, i.e. $\{\mathscr{L}f: f\in \mathcal{D}\}$, is a subset of $C_0(\mathbb{R}_+\times \mathbb{Z})$. Recall $\mathscr{L}$ in \eqref{eq:genL}. We focus on the term of small jumps, the others are treated along similar arguments:
\begin{align*}
    &\Bigg\lvert x\sum_{k\geq 0}\int_0^1\left(f(x+y,\ell+k)-f(x,\ell)-yf'(x,\ell)\right)\pi(\ddr y,\ddr k)\Bigg\rvert \\
    &\leq  \int_0^1\!\!x\big\lvert f(x+y,\ell)-f(x,\ell)-yf'(x,\ell)\big\lvert\pi(\ddr y,\{0\})\\
    &\qquad +\sum_{k\geq 1}\int_0^1 x\big\lvert f(x+y,\ell+k)-f(x,\ell)-yf'(x,\ell)\big\lvert\pi(\ddr y,\ddr k).
\end{align*}
Since $f\in \mathcal{D}$, the integrand in the first integral above is continuous in $x$ and tends to zero as $x+\ell$ goes to $\infty$. It is moreover dominated by $Cy^2/2$ with $C:=\sup_{(x,\ell)\in E}|f''(x,\ell)|$, which is integrable on $(0,1)$ with respect to the measure $\pi(\ddr y,\{0\})$, see \eqref{eq:integrabilitypirho}. Hence, by Lebesgue theorem, the first integral term is continuous in $x$ and vanishes. This is also true for the second integral term, since the measure $\pi(\ddr y, \ddr k)$, restricted to $\mathbb{R}_+\times \mathbb{N}$, is finite, see \eqref{eq:integrabilitypirho}, and the integrand is continuous vanishing as $(x,\ell) \rightarrow \Delta$. We conclude that $\mathscr{L}f\in C_0(\mathbb{R}_+\times \mathbb{Z}_+)$.

We  argue now that $\mathcal{D}$ is dense in $C_0(\mathbb{R}_+\times \mathbb{Z}_+)$ for the uniform norm. The linear span of the functions $f_{q,r}$, $D=\mathrm{Vect}\left\{f_{q,r}, q>0,r<1\right\}$ is a subset of $\mathcal{D}$ and forms a subalgebra of $C_0(\mathbb{R}_+\times \mathbb{Z}_+)$, that is separating $\mathbb{R}_+\times \mathbb{Z}_+$. Therefore by Stone-Weierstrass theorem, see e.g. \cite[Theorem 7.32 page 162]{zbMATH03539473}, the latter is dense in $C_0(\mathbb{R}_+\times \mathbb{Z}_+)$.  Next, we show the positive maximum principle, see \cite[page 165]{EK}, namely  we check that for any $f\in \mathcal{D}$,  if $(x_0,\ell_0)\in E=\mathbb{R}_+\times \mathbb{Z}_+$ is such that $\sup f(x,\ell)=f(x_0,\ell_0)\geq 0$, then \[\mathscr{L}f(x_0,\ell_0)\leq 0.\]
First, plainly since $\kappa\geq 0$ and $\mathrm{k}\geq 0$, one has
$-\kappa x_0 f(x_0,\ell_0)\leq  0$  and $-\mathrm{k} \ell_0 f(x_0,\ell_0)\leq  0$.
Next, if $\ell_0\in \mathbb{Z}_+$ and $x_0\in (0,\infty)$  then $f(\ell_0+k,x_0)\leq f(\ell_0,x_0)$, similarly $f(\ell_0+k,x_0+y)\leq f(\ell_0,x_0)$ and necessarily $f'(x_0,\ell_0)=0$, so that the compensation term and the drift term vanish. Last, if $x_0=0$, then necessarily $f'(0,\ell_0)\leq 0$ and since $b\geq 0$, the drift term is also non-positive. In both cases $x_0>0$ and $x_0=0$, the second derivative term in $x_0$, coming from the diffusive part, is nonpositive since $x_0$ is a maximum and $\sigma\geq 0$. 
\end{proof}
The next lemma states an \textit{algebraic} duality relationship for the operator $\mathscr{L}$.
\begin{lem} 
For all $(x,\ell) \in \mathbb{R}_+\times \mathbb{Z}_+$, define on $(0,\infty)\times (0,1)$, the map 
\begin{center}
$g_{x,\ell}:(q,r)\mapsto e^{-qx}r^{\ell}$.
\end{center}
One has
\begin{align}\mathscr{L}f_{q,r}(x,\ell)&=xe^{-qx}r^{\ell}\Psi_c(q,r)+e^{-qx}\ell r^{\ell-1}\Psi_d(q,r)\label{eq:Lfqr}\\
&=-\Psi_c(q,r)\frac{\partial}{\partial q}g_{x,\ell}(q,r)+\Psi_d(q,r)\frac{\partial}{\partial r}g_{x,\ell}(q,r)\label{eq:Lfqrwithderivatives}\\
&=-\mathbf{\Psi}\cdot \nabla g_{x,\ell}(q,r) \label{eq:algebraicduality}
\end{align}
where $\mathbf{\Psi}=\big(\Psi_c, -\Psi_d\big)$. 
\end{lem}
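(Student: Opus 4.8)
The plan is to verify the identity \eqref{eq:Lfqr} by direct computation, applying the operator $\mathscr{L}$ from \eqref{eq:genL} to the exponential test function $f_{q,r}(x,\ell)=e^{-qx}r^{\ell}$, and then to re-express the result in the form \eqref{eq:Lfqrwithderivatives}--\eqref{eq:algebraicduality} by recognizing the partial derivatives of $g_{x,\ell}(q,r)=e^{-qx}r^{\ell}$ (note $g_{x,\ell}$ and $f_{q,r}$ are the \emph{same} function of the four variables, with the roles of arguments and parameters swapped). First I would record the elementary identities $f_{q,r}'(x,\ell)=-q e^{-qx}r^{\ell}$, $f_{q,r}''(x,\ell)=q^2 e^{-qx}r^{\ell}$, $f_{q,r}(x+y,\ell+k)=e^{-q(x+y)}r^{\ell+k}=e^{-qx}r^{\ell}\cdot e^{-qy}r^{k}$, and $f_{q,r}(x,\ell-1)=e^{-qx}r^{\ell-1}$. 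Substituting these into each of the six terms of $\mathscr{L}f_{q,r}(x,\ell)$ and factoring out $e^{-qx}r^{\ell}$ where possible, the drift, diffusion, killing, and $\pi$-integral terms all carry a prefactor $x e^{-qx}r^{\ell}$, while the $\rho$-integral and the $d$-death term carry a prefactor $\ell e^{-qx}r^{\ell-1}$ (after pulling out one factor of $r$).

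Concretely, the terms proportional to $x$ assemble, using \eqref{eq:Psic}, into
\[
x e^{-qx} r^{\ell}\left(\frac{\sigma^2}{2}q^2 - \gamma q - \kappa + \sum_{k\ge 0}\int_{\mathbb{R}_+}\big(e^{-qy}r^k - 1 + qy\ind_{(0,1)}(y)\big)\pi(\ddr y,\ddr k)\right) = x e^{-qx}r^{\ell}\,\Psi_c(q,r),
\]
where the $-1+qy\ind_{(0,1)}(y)$ correction inside the integral comes from the term $-y\ind_{(0,1)}(y)f_{q,r}'(x,\ell) = qy\ind_{(0,1)}(y)e^{-qx}r^{\ell}$ together with the $-f_{q,r}(x,\ell)$ subtraction. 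The terms proportional to $\ell$ assemble, using \eqref{eq:Psid}, into
\[
\ell e^{-qx} r^{\ell-1}\left(\sum_{k\ge 0}\int_{\mathbb{R}_+}\big(e^{-qy}r^{k+1} - r\big)\rho(\ddr y,\ddr k) - bqr + d(1-r) - \mathrm{k}r\right) = \ell e^{-qx}r^{\ell-1}\,\Psi_d(q,r),
\]
where the $b\ell f_{q,r}'(x,\ell)=-bq\ell e^{-qx}r^{\ell}$ drift term produces the $-bqr$ piece and the $d\ell(f_{q,r}(x,\ell-1)-f_{q,r}(x,\ell))=d\ell e^{-qx}(r^{\ell-1}-r^{\ell})$ term produces the $d(1-r)$ piece. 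Adding the two blocks gives \eqref{eq:Lfqr}.

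For the second equality, I would simply observe $\frac{\partial}{\partial q}g_{x,\ell}(q,r) = -x e^{-qx}r^{\ell}$ and $\frac{\partial}{\partial r}g_{x,\ell}(q,r) = \ell e^{-qx}r^{\ell-1}$, so that $x e^{-qx}r^{\ell}\Psi_c(q,r) = -\Psi_c(q,r)\frac{\partial}{\partial q}g_{x,\ell}(q,r)$ and $\ell e^{-qx}r^{\ell-1}\Psi_d(q,r) = \Psi_d(q,r)\frac{\partial}{\partial r}g_{x,\ell}(q,r)$, which is exactly \eqref{eq:Lfqrwithderivatives}; rewriting the pair $(\Psi_c,-\Psi_d)$ as $\mathbf{\Psi}$ and the pair of partials as $\nabla g_{x,\ell}$ yields \eqref{eq:algebraicduality}. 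There is no real obstacle here — the only points requiring a little care are the bookkeeping of the small-jump compensator (making sure the $-1$ and the $+qy\ind_{(0,1)}(y)$ land correctly inside the $\pi$-integral) and the single factor of $r$ that must be pulled out of the $\rho$-integral and the $d$-term to match the $r^{\ell-1}$ normalization; both are routine. The identity is purely algebraic (no convergence issues), since $f_{q,r}\in\mathcal{D}$ and the defining integrability conditions \eqref{eq:integrabilitypirho} guarantee every integral above is finite.
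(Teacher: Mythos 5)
Your computation is correct and follows exactly the same route as the paper: substitute $f_{q,r}$ into \eqref{eq:genL}, regroup the terms with prefactors $x e^{-qx}r^{\ell}$ and $\ell e^{-qx}r^{\ell-1}$ to recognize $\Psi_c$ and $\Psi_d$ from \eqref{eq:Psic}--\eqref{eq:Psid}, and then identify those prefactors as $-\partial_q g_{x,\ell}$ and $\partial_r g_{x,\ell}$. No differences in substance; the bookkeeping remarks you flag (compensator terms, the extra factor of $r$) are the same points the paper handles implicitly.
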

\begin{proof}
Recall $\mathscr{L}$ in \eqref{eq:genL}, one has 
\begin{align*}
\mathscr{L}f_{q,r}(x,\ell)=&-\gamma x qe^{-qx}r^{\ell}-b \ell qe^{-qx}r^{\ell}+\frac{\sigma^2}{2}xq^2e^{-qx}r^{\ell}-\kappa xe^{-qx}r^{\ell}-\mathrm{k} \ell e^{-qx}r^{\ell}\\
&+x\sum_{k\geq 0}\int_{\mathbb{R}_+}\left(e^{-q(x+y)}r^{\ell+k}-e^{-qx}r^{\ell}+y\ind_{(0,1)}(y)qe^{-qx}r^{\ell}\right)\pi(\ddr y,\ddr k)\nonumber\\
&+\ell\sum_{k\geq 0}\int_{\mathbb{R}_+}\left(e^{-q(x+y)}r^{\ell+k}-e^{-qx}r^{\ell}\right)\rho(\ddr y,\ddr k)+\underbrace{d\ell  \big( e^{-qx}r^{\ell-1}-e^{-qx}r^{\ell}\big)}_{=d\ell r^{\ell-1}e^{-qx}(1-r)}.\nonumber
\end{align*}
By rearranging everything in order to make appear $\Psi_c$ and $\Psi_d$, as defined in \eqref{eq:Psic} and \eqref{eq:Psid},
we get
$$\mathscr{L}f_{q,r}(x,\ell)=xe^{-qx}r^{\ell}\Psi_c(q,r)+e^{-qx}\ell r^{\ell-1}\Psi_d(q,r).$$
Using the facts that 
\begin{center}
$xe^{-qx}r^{\ell}=-\frac{\partial}{\partial q}g_{x,\ell}(q,r)$ and $\ell e^{-qx}r^{\ell-1}=\frac{\partial}{\partial r}g_{x,\ell}(q,r)$, 
\end{center}
we have finally 
\[\mathscr{L}f_{q,r}(x,\ell)=-\Psi_c(q,r)\frac{\partial}{\partial q}g_{x,\ell}(q,r)+\Psi_d(q,r)\frac{\partial}{\partial r}g_{x,\ell}(q,r).\]
\end{proof}
\begin{lem}\label{lem:uniquenessode}
There exists a unique solution $t\mapsto F_t(q,r):=\big(u_t(q,r),f_t(q,r)\big)$ to 
\begin{align}\label{odeuf2}
&\frac{\ddr }{\ddr t}
u_t(q,r)
=-\Psi_c\big(u_t(q,r),f_t(q,r)\big),\ \frac{\ddr }{\ddr t}
f_t(q,r)
=\Psi_d\big(u_t(q,r),f_t(q,r)\big)\\
&u_0(q,r)=q, \ f_0(q,r)=r.
\end{align}
Furthermore, for all $t\in [0,\infty)$, $r\in (0,1)$ and $q\in (0,\infty)$,
\begin{center} 
$0<u_t(q,r)<\infty$ and $0<f_t(q,r)<1$.
\end{center}
\end{lem}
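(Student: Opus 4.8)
The statement concerns the ODE system \eqref{odeuf2}, which is a finite-dimensional autonomous ODE. The plan is to verify the hypotheses of the Picard--Lindelöf theorem on an appropriate domain, establish \emph{a priori} bounds that confine the solution to that domain, and then bootstrap the local solution to a global one. First I would observe that $\Psi_c$ and $\Psi_d$, as given by \eqref{eq:Psic}--\eqref{eq:Psid}, are locally Lipschitz on the open set $(0,\infty)\times(0,1)$: the polynomial and bilinear terms in $q,r$ are smooth, while the integral terms are differentiable in $(q,r)$ with derivatives controlled, using the integrability conditions \eqref{eq:integrabilitypirho}, by dominated convergence (the small-jump part of $\pi$ contributes a term $O(q)$ after one differentiation, bounded on compacts, and the mass of $\pi$ and $\rho$ away from $\{k=0\}$ is finite). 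Hence for any initial datum $(q,r)\in(0,\infty)\times(0,1)$ there is a unique maximal solution on some interval $[0,T_{\max})$.

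The core of the argument is then to show $T_{\max}=\infty$ together with the stated confinement $0<u_t<\infty$, $0<f_t<1$. I would proceed by examining the boundary behavior of the vector field. On $\{r=1\}$ one has $\Psi_d(q,1)=\sum_{k}\int(e^{-qy}-1)\rho(\ddr y,\ddr k)-bq-\mathrm{k}\le 0$, so $\frac{\ddr}{\ddr t}f_t\le 0$ there, i.e. the field points inward (or tangentially) at the upper boundary; since $f_0=r<1$, the solution cannot cross $f_t=1$. For the lower boundary $f_t=0$: from \eqref{eq:Psid}, $\Psi_d(q,0)=d\ge 0$, so $\frac{\ddr}{\ddr t}f_t\ge 0$ there and $f$ cannot reach $0$ either; more carefully, a Grönwall-type comparison shows $f_t$ stays bounded below by a strictly positive function on each finite interval (the term $d(1-r)$ and $-\mathrm{k}r$ give $\dot f_t\ge d - (\text{const})f_t$ near $0$). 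Similarly, for $u_t$: near $q=0$, $\Psi_c(q,r)=-\kappa+O(q)$ as $q\downarrow0$ (using $\int(1\wedge y^2)\pi<\infty$), so $\dot u_t = -\Psi_c \ge \kappa - Cq \ge -Cu_t$ near $0$, preventing $u_t$ from hitting $0$ in finite time; and an upper bound $\dot u_t = -\Psi_c(u_t,f_t) \le C(1+u_t+u_t^2)$ — here the quadratic $\frac{\sigma^2}{2}u_t^2$ term is the dominant one, but combined with the bounded-coefficient structure it still only gives, \emph{a priori}, possible blow-up; this is the delicate point.

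Here is where I expect the main obstacle: controlling $u_t$ from above. Naively $-\Psi_c(u,f)\sim -\frac{\sigma^2}{2}u^2$ for large $u$, which is \emph{favorable} (the quadratic term drives $u$ \emph{down}, since it enters $\Psi_c$ with a $+$ sign and the ODE has a $-\Psi_c$). So in fact, for $u$ large, $\dot u_t = -\Psi_c(u_t,f_t) \le -\frac{\sigma^2}{2}u_t^2 + \gamma u_t + \kappa + \big(\text{bounded jump contribution}\big)$, where the jump term $\sum_k\int(e^{-uy}r^k - 1 + uy\ind_{(0,1)})\pi$ is, for $u$ large, dominated by $Cu$ (from the small-jump compensator $uy\ind_{(0,1)}$, integrable against $(1\wedge y^2)\pi$, giving $O(u)$ after bounding $\int_0^1 y\,\pi(\ddr y,\{0\})$ on the truncation... actually one must be slightly careful and use $\int_0^1 y^2\pi<\infty$ to get an $O(u)$ bound only after noting $e^{-uy}-1+uy \le Cu^2y^2$ for $uy\le1$ and $\le Cuy$ for $uy\ge1$, hence $\le C(u + u\cdot\int(1\wedge y^2)\pi)$ — this elementary but fiddly estimate is the technical heart). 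The upshot is $\dot u_t \le C_1 u_t - C_2 u_t^2 \le C_1 u_t$, whence $u_t \le q e^{C_1 t}<\infty$ on every finite interval, and in particular no explosion. Combining the four boundary estimates with the local existence/uniqueness yields $T_{\max}=\infty$ and the confinement, completing the proof. (The case $\sigma=0$ is even easier, as then $\Psi_c$ grows at most linearly in $q$ after the jump estimate.)
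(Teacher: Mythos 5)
Your overall architecture matches the paper's: local Lipschitz plus Picard--Lindel\"of, then a priori confinement to $(0,\infty)\times(0,1)$ via one-sided bounds on $\Psi_c,\Psi_d$ and Gr\"onwall-type comparisons, then bootstrap to global existence and uniqueness. The treatment of $f_t<1$ (via $\Psi_d(q,r)\le d(1-r)$) and of $u_t>0$ (via $\Psi_c(q,r)\le c\,q^2 - \gamma q$, or equivalently $\dot u_t\ge -Cu_t$ near $0$) is sound in both.

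There is, however, a genuine gap precisely at the point you flag as ``the technical heart'': the upper bound on $u_t$. You try to control the compensated small-jump integral $\int_0^1(e^{-uy}-1+uy)\,\pi(\ddr y,\{0\})$ in absolute value and assert it is $O(u)$, citing $\int_0^1 y\,\pi(\ddr y,\{0\})$ after truncation; but that quantity is not finite in general --- only $\int_0^1 y^2\pi(\ddr y,\{0\})<\infty$ is assumed, and the resulting estimate is $O(u^2)$, not $O(u)$. More importantly, the direction of the bound is confused: since $\dot u_t = -\Psi_c(u_t,f_t)$, what you need is a \emph{lower} bound on the integral term of $\Psi_c$ (equivalently an upper bound on its contribution to $-\Psi_c$), not an upper bound on its absolute value. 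The decisive observation, which you miss, is one of sign: $e^{-qy}-1+qy\ind_{(0,1)}(y)\ge 0$ on $(0,1)$, and $e^{-qy}r^k - 1\ge -1$ on $[1,\infty)\times\mathbb Z_+$ and on $\mathbb R_+\times\mathbb N$; these pieces have finite mass by \eqref{eq:integrabilitypirho}. Hence the integral in $\Psi_c$ is bounded \emph{below} by a constant uniformly in $q$, and therefore $-\Psi_c(q,r)\le c_{\pi,\kappa} + \gamma q$ --- a \emph{linear} bound in $q$, with no appeal to the quadratic $-\tfrac{\sigma^2}{2}q^2$ term. Your argument instead relies on $-\tfrac{\sigma^2}{2}u_t^2$ to cancel a supposed $Cu_t$ (in fact $Cu_t^2$) contribution; your closing parenthesis, claiming the case $\sigma=0$ is ``even easier,'' is precisely where this breaks down, since there is then no quadratic term to absorb the error and your stated jump estimate is wrong. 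A second, lesser issue: the lower bound $f_t>0$ requires a Gr\"onwall comparison whose rate depends on $u_t$ (because $\Psi_d(q,r)\gtrsim -C(1+q)r$), so the bound on $u_t$ must be fed into it; the paper handles this explicitly via the change of variables $f_t = e^{-u^d_t}$ and plugs in the exponential bound on $u_t$, whereas your sketch leaves this dependence implicit.
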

\begin{proof}
All first partial derivatives of the function $\mathbf{\Psi}(q,r)=\left(\Psi_c(q,r),
-\Psi_d(q,r)\right)$ are continuous and bounded on domains of the form $(a,b)\times (l,r)\subset (0,\infty)\times (0,1)$. This entails that $\mathbf{\Psi}$ is locally Lipschitz on $(0,\infty)\times (0,1)$, see e.g. \cite[Chapter 6, page 174]{zbMATH03587373}. Cauchy-Lipschitz theorem ensures the existence of a local solution. We now show that it is bounded below and above by positive and finite quantities.  

We first find an upper bound for $\Psi_c$. Rewrite the integrand of
\eqref{eq:Psic} with the help of the identity
\begin{equation}\label{identityintegrand}e^{-qy}r^{k}-1+qy\ind_{(0,1)}(y)= e^{-qy}r^{k}-e^{-qy}+e^{-qy}-1+qy\ind_{(0,1)}(y),\ \forall y\in (0,\infty),k\geq 0.
\end{equation}
Since
$e^{-qy}r^{k}-e^{-qy}\leq 0$, $e^{-qy}-1\leq 0$ 
and there is $C>0$ such that for all $y\in (0,1)$, $0\leq e^{-qy}-1+qy\leq C\frac{(qy)^2}{2}$, we get
\begin{align}
\Psi_c(q,r)&\leq \sum_{k\in \mathbb{N}}\int_{(0,\infty)}\left(e^{-qy}-1+qy\ind_{(0,1)}(y)\right) \pi(\ddr y, \ddr k)-\gamma q+\frac{\sigma^2}{2}q^2-\kappa\nonumber\\
&\leq \sum_{k\in \mathbb{N}}\int_{(0,1)}\left(e^{-qy}-1+qy\right) \pi(\ddr y, \ddr k)-\gamma q+\frac{\sigma^2}{2}q^2 \nonumber\\
&\leq c_{\sigma,\pi}q^2-\gamma q
\end{align}
with $c_{\sigma,\pi}:=\frac{\sigma^2}{2}+C\int_{0}^{1}y^2\pi(\ddr y,\mathbb{Z}_+)$.
By comparison, we see from the o.d.e
\[\frac{\ddr }{\ddr t}u_t(q,r)=-\Psi_c\big(u_t(q,r),f_t(q,r)\big)\]
that \[u_t(q,r)\geq \frac{q\gamma e^{-\gamma t}}{\gamma +\frac{qc_{\sigma,\pi}}{2}\big(e^{-\gamma t}-1\big)}>0.\]

We now show that $f_t(q,r)<1$. Recall \eqref{eq:Psid}. Plainly $e^{-qy}r^{k+1}-r=r\big(e^{-qy}r^{k}-1\big)\leq 0$ and $b,d\geq 0$, thus $\Psi_d(q,r)\leq d(1-r)$. Hence,
$$\frac{\ddr }{\ddr t}f_t(q,r)=\Psi_d(u_t(q,r),f_t(q,r))\leq d\big(1-f_t(q,r)\big).$$
By setting $k_t(q,r)=1-f_t(q,r)$, we get
$\frac{\ddr }{\ddr t}k_t(q,r)\geq -dk_t(q,r),$ so $k_t(q,r)\geq re^{-dt}>0$ and $f_t(q,r)<1$ for all $t\geq 0$.
\\ 

It remains to establish that for all $(q,r)\in (0,\infty)\times (0,1)$,
$$u_t(q,r)<\infty \text{ and } f_t(q,r)>0,  \forall t\geq 0.$$ 
Recall
\[\frac{\ddr }{\ddr t}u_t(q,r)=-\Psi_c\big(u_t(q,r),f_t(q,r)\big)\]
with
\begin{align*}
-\Psi_c\big(q,r\big) & =  - \sum_{k\geq 0}\int_{\mathbb{R}_+} \left(e^{-qy}r^{k}-1+qy\ind_{(0,1)}(y)\right)\pi(\ddr y, \ddr k)+\gamma q- \frac{\sigma^2}{2}q^2+\kappa \\
& \leq   \int_{\mathbb{R}_+} \left(1-e^{-qy}- qy \ind_{(0,1)}(y)\right)\pi(\ddr y, \{0\}) + \sum_{k\geq 1}\int_{\mathbb{R}_+}  \pi(\ddr y, \ddr k)+ \gamma q +\kappa\\
& \leq  \int_{1}^{\infty}  \pi(\ddr y, \{0\}) + \sum_{k\geq 1}\int_{\mathbb{R}_+}  \pi(\ddr y, \ddr k)+ \gamma q +\kappa=:c_{\pi,\kappa} +\gamma q  \\
\end{align*}
hence $\frac{\ddr }{\ddr t}u_t(q,r) \leq c_{\pi,\kappa} + \gamma u_t(q,r)$, which in turn implies, recalling $u_0(q,r)=q$,
\begin{align}
\label{bound:utqr}
u_t(q,r) \leq   \frac{(c_{\pi,\kappa}+\gamma q) e^{\gamma t}- c_{\pi,\kappa}}{\gamma}.
\end{align}

For any $\eta \in (0,\infty)$, define $u_t^{d}(q,\eta)$ such that 
\begin{equation}\label{def:utd}
e^{-u_t^{d}(q,\eta)}=f_t(q,e^{-\eta}).
\end{equation} 
Set \[\bar{\Psi}_d(q,\eta):=e^{\eta}\Psi_d(q,e^{-\eta}).\]
We see from the o.d.e  \eqref{odeuf2} solved by $f_t(q,r)$ that
\begin{align*}
\frac{\ddr }{\ddr t}u_t^{d}(q,\eta)=-\bar{\Psi}_d\big(u_t(q,e^{-\eta}),u_t^{d}(q,\eta)\big).
\end{align*}
By plugging $r=e^{-\eta}$ in the expression \eqref{eq:Psid} of $\Psi_d(q,r)$, we get
\begin{align}
-\bar{\Psi}_d\big(q,\eta \big) 
& = \sum_{k=0}^{\infty}\int_{\mathbb{R}_+}\left(1-e^{-qy} e^{-\eta k}\right)\rho(\ddr y,\ddr k)+ bq-d (e^\eta-1)-\mathrm{k} \label{eq:barPsid}\\
& \leq \int_{\mathbb{R}_+}\left(1-e^{-qy} \right)\rho(\ddr y,\{0\}) + \sum_{k=1}^{\infty}\int_{\mathbb{R}_+} \rho(\ddr y,\{k\})  + bq \nonumber\\
& \leq \left(\int_{0}^1 y \rho(\ddr y,\{0\}) +b\right) q + \int_{1}^{\infty} \rho(\ddr y,\{0\}) + \sum_{k=1}^{\infty}\int_{\mathbb{R}_+} \rho(\ddr y,\{k\})  =: c_{\rho,1} q +  c_{\rho,2}, \nonumber
\end{align}
hence $\frac{\ddr }{\ddr t} u_t^{d}(q,\eta) = c_{\rho,1} u_t(q,e^{-\eta})  +  c_{\rho,2}$, and given the bound \eqref{bound:utqr} above on $u_t(q,r)$, we get
\[u_t^{d}(q,\eta) \leq \eta  +  c_{\rho,1} \frac{(c_{\pi,\kappa}+\gamma q) e^{\gamma t}- \gamma c_{\pi,\kappa} t }{\gamma^2} +  c_{\rho,2} t,\] which ensures $u_t^{d}(q,\eta) <\infty$, and in turn, by \eqref{def:utd},
$f_t(q,r) >0$.\\

Let $(q,r)\in (0,\infty)\times (0,1)$. We have just established that up to any time $t\geq 0$, any solution $[0,t]\ni s\mapsto (u_s(q,r),f_s(q,r))$ to \eqref{odeuf2} stays in a domain $D\subset (0,\infty)\times (0,1)$. The function $\mathbf{\Psi}$ being Lipschitz on such domain $D$, there is a unique solution to the equation up to time $t$. The latter being arbitrary, the solution is global, namely it is defined on the whole half-line. 

\end{proof}
We now characterize the one-dimensional law of the solution $(X,L)$ with the help of $t\mapsto F_t(q,r)$.
\begin{lem}  
For all $(q,r)\in (0,\infty)\times [0,1)$, 
\begin{equation}\label{eq:dualsemigroupinlem}
\mathbb{E}_{(x,n)}[e^{-qX_t}r^{L_t}]=e^{-x u_t(q,r)}f_t(q,r)^n \qquad \forall (x,n)\in \mathbb{R}_+\times\mathbb{Z}_+, \forall t\geq 0.
\end{equation}
\end{lem}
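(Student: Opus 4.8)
The goal is to identify the Laplace--generating-function transform of the one-dimensional law of the càdlàg solution $(X,L)$ constructed in Lemma~\ref{lem:existenceMP} with the quantity $e^{-xu_t(q,r)}f_t(q,r)^n$ built from the o.d.e.\ of Lemma~\ref{lem:uniquenessode}. The natural route is a \emph{duality} argument à la Ethier--Kurtz: show that $f_{q,r}(x,\ell)=e^{-qx}r^{\ell}$ and the o.d.e.\ flow $F_t(q,r)=(u_t(q,r),f_t(q,r))$ are dual through the pairing $H\big((x,\ell),(q,r)\big):=e^{-qx}r^{\ell}$, and then invoke the standard uniqueness-of-duals theorem to conclude. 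Concretely, I would proceed as follows.

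First, fix $(q,r)\in(0,\infty)\times(0,1)$ and $(x,n)\in\mathbb{R}_+\times\mathbb{Z}_+$, and apply the martingale property of $\mathrm{MP}_{\mathbf X}(\mathscr L,\mathcal D)$ to the test function $f_{u,\rho}\in\mathcal D$ for $(u,\rho)$ ranging over compact subsets of $(0,\infty)\times(0,1)$ — note $f_{u,\rho}\in\mathcal D$ since $x\mapsto e^{-ux}$ and its derivatives decay faster than any power of $x$, and $\ell\mapsto \rho^{\ell}$ decays geometrically. By the algebraic duality identity \eqref{eq:algebraicduality}, $\mathscr L f_{u,\rho}(x,\ell)=-\mathbf\Psi(u,\rho)\cdot\nabla g_{x,\ell}(u,\rho)$, i.e.\ the action of $\mathscr L$ on the ``forward'' variables $(x,\ell)$ equals (up to sign) a first-order differential operator in the ``backward'' variables $(u,\rho)$. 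I would then fix $t>0$ and consider, for $s\in[0,t]$, the process $M_s:=f_{u_s(q,r),f_s(q,r)}(X_s,L_s)$, i.e.\ evaluate the exponential test function along the o.d.e.\ flow run backwards; the claim is that $\mathbb E_{(x,n)}[M_s]$ is constant in $s\in[0,t]$. Differentiating in $s$, the contribution from the o.d.e.\ \eqref{odeuf2} (chain rule on $(u_s,f_s)$) exactly cancels the contribution from the generator $\mathscr L$ by \eqref{eq:algebraicduality}, so $\tfrac{\ddr}{\ddr s}\mathbb E_{(x,n)}[M_s]=0$; evaluating at $s=0$ and $s=t$ gives $\mathbb E_{(x,n)}[f_{q,r}(X_t,L_t)]=f_{u_t(q,r),f_t(q,r)}(x,n)$, which is \eqref{eq:dualsemigroupinlem}.

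The technical care needed is in justifying the interchange of derivative and expectation and in controlling the stopped/killed nature of the process. Since the solution is killed at the explosion time $\zeta$ and sent to $\Delta$ with $\mathscr L f(\Delta)=0$ and $f_{q,r}(\Delta)=0$ (as $q,r$ are such that $e^{-qx}r^{\ell}\to 0$ at $\Delta$), the martingale problem already incorporates the boundary behaviour correctly, so no extra localization at $\Delta$ is needed beyond noting $f_{q,r}\in C_0$. For the differentiation under the expectation, I would use that on any compact time interval $[0,t]$ the flow $(u_s(q,r),f_s(q,r))$ stays in a compact $D\Subset(0,\infty)\times(0,1)$ by Lemma~\ref{lem:uniquenessode}, hence $(s,x,\ell)\mapsto \mathscr L f_{u_s,f_s}(x,\ell)$ is bounded on $E$ uniformly in $s\in[0,t]$ (this is exactly the $C_0$-boundedness established in the proof of Lemma~\ref{lem:existenceMP}), which licenses Fubini/dominated convergence in the standard ``duality implies equality of laws'' computation; this is precisely the setting of \cite[Chapter 4, Section 4, Theorem 4.2 and Corollary 4.3]{EK}. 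Alternatively, and perhaps cleanest to write, I would invoke \cite[Lemma 4.4.10 or Theorem 4.4.11]{EK} directly, after verifying the hypotheses: $\mathscr L$ satisfies the duality relation $\mathscr L f_{\cdot,\cdot}(x,\ell)$ expressed via $\nabla$ in the dual variables, the dual ``generator'' is the (deterministic) vector field $-\mathbf\Psi$ whose flow is the unique global solution from Lemma~\ref{lem:uniquenessode}, and the requisite integrability/boundedness holds on compacts.

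\textbf{Main obstacle.} The only genuinely delicate point is the boundary/explosion bookkeeping together with the domination needed to differentiate under the expectation: one must be sure that evaluating $f_{q,r}$ along paths that may jump to $\Delta$ causes no defect in the identity (handled by $f_{q,r}(\Delta)=0=\mathscr Lf_{q,r}(\Delta)$), and that the family $\{\mathscr L f_{u_s(q,r),f_s(q,r)}:s\in[0,t]\}$ is uniformly bounded in $C_0(E)$ so that the martingale-problem identity can be integrated against the (continuously varying) flow. Both are consequences of facts already in hand — the $C_0$-mapping property of $\mathscr L$ on $\mathcal D$ from Lemma~\ref{lem:existenceMP}, and the a priori bounds $0<u_t(q,r)<\infty$, $0<f_t(q,r)<1$ on compact time intervals from Lemma~\ref{lem:uniquenessode} — so the proof is routine modulo assembling them. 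Finally, once \eqref{eq:dualsemigroupinlem} is known for $r\in(0,1)$, the case $r=0$ follows by letting $r\downarrow 0$ using monotone/dominated convergence (with the convention $0^0=1$) on both sides, since $f_t(q,r)\downarrow f_t(q,0+)\in[0,1)$ by continuity of the flow in the initial condition.
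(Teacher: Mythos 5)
Your proposal matches the paper's proof in substance: both rest on the algebraic duality relation \eqref{eq:algebraicduality} together with the Ethier--Kurtz duality theorem (\cite[Theorem 4.11, p.~192]{EK}, which is the Theorem~4.4.11 you cite as an alternative), with the a priori bounds $u_t\in(0,\infty)$, $f_t\in(0,1)$ from Lemma~\ref{lem:uniquenessode} supplying the integrability of the dual pairing required by condition~(4.50). One small slip: in your direct argument the process should be $M_s:=f_{u_{t-s}(q,r),\,f_{t-s}(q,r)}(X_s,L_s)$ — you wrote $u_s$, $f_s$, but your parenthetical ``run backwards'' and the endpoint evaluation $\mathbb E_{(x,n)}[f_{q,r}(X_t,L_t)]=f_{u_t(q,r),f_t(q,r)}(x,n)$ make clear the time-reversed indexing was intended. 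Where you label the integrability ``routine,'' the paper makes it explicit: writing $g\big((x,\ell),(u,f)\big)=x\Psi_c(u,f)e^{-ux}f^{\ell}+e^{-ux}\ell f^{\ell-1}\Psi_d(u,f)$, it bounds this uniformly in $(x,\ell)$ via $xe^{-ux}\le 1/u$ and $\ell f^{\ell}\le 1/\ln(1/f)$ and then takes the supremum of $(u_t,f_t)$ over the compact time interval — a clean way to discharge the domination you would need for your interchange of $\ddr/\ddr s$ and expectation.
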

\begin{proof}
The task here is to establish the duality relationship between the solution to MP$(\mathscr{L},\mathcal{D})$, $(X,L)$ provided by Lemma \ref{lem:existenceMP}, and the deterministic process $t\mapsto F_t(q,r)=(u_t(q,r),f_t(q,r))$. Having noticed the algebraic relationship \eqref{eq:algebraicduality}, it remains to apply \cite[Theorem 4.11 page 192]{EK}, and thus to verify its condition (4.50). The latter shrinks here to the following. Let \[g\big((x,\ell),(u,f)\big):=x\Psi_c(u,f)e^{-ux}f^{\ell}+e^{-ux}\ell f^{\ell-1}\Psi_d(u,f),\]
one has to check that for any $T>0$,
\[S_T:=\sup_{s,t\leq T}\left\lvert g\big((X_s,L_s),(u_t,f_t)\big)\right\lvert,\]
is integrable, with $(u_t,f_t)=F_t(q,r)$. Using the inequalities \[xe^{-ux}\leq 1/u \text{ and } \ell f^{\ell}=\ell e^{-\ell \ln(1/f)}\leq 1/\ln(1/f),\]
we get for all $(x,\ell)\in \mathbb{R}_+\times \mathbb{Z}_+$, all $q\in (0,\infty),r\in (0,1)$ and  all $t\geq 0$
\[\left\lvert g\big((x,\ell),(u_t,f_t)\big)\right \lvert \leq \left\lvert\frac{\Psi_c(u_t,f_t)}{u_t} \right\lvert+\left\vert\frac{\Psi_d(u_t,f_t)}{f_t\ln(1/f_t)}\right\lvert.\]
According to Lemma \ref{lem:uniquenessode}, $u_t\in (0,\infty)$ and $f_t\in (0,1)$ for all $t$. By continuity, their extrema on the compact time interval $[0,T]$ belongs to $(0,\infty)$ and $(0,1)$, therefore $S_T$ is bounded.
\end{proof}
\begin{proof}[Proof of Theorem \ref{thm:branchingbitype}] It follows by applying \cite[Theorem 4.2 page 184]{EK}.\end{proof}

We establish now Proposition \ref{Fellerprop} and verify among other things the branching property of $\mathbf{X}$.
\begin{proof}[Proof of Proposition \ref{Fellerprop}] 
We start by verifying the Feller property and exhibiting a core.  
Recall $D$ the space generated by the linear combinations of the functions $f_{q,r}$. Since as $t$ goes to $0$, $u_t(q,r)\rightarrow q$ and $f_t(q,r)\rightarrow r$, one has for any $(x,\ell)\in \mathbb{R}_+\times \mathbb{Z}_+$, $P_tf_{q,r}(x,\ell)\underset{t\rightarrow 0}{\longrightarrow} f_{q,r}(x,\ell)$. Furthermore, $D\subset \mathcal{D}$ and $P_tD\subset C_0(\mathbb{R}_+\times \mathbb{Z}_+)$. Therefore, by density  of $D$ in $C_0(\mathbb{R}_+\times \mathbb{Z}_+)$, we see that for all $f\in C_0(\mathbb{R}_+\times \mathbb{Z}_+)$, $$P_tf\in C_0(\mathbb{R}_+\times \mathbb{Z}_+) \text{ and } P_tf(x,\ell)\underset{t\rightarrow 0}{\longrightarrow} f(x,\ell).$$
Last, since $D$ is a subset of the domain of $\mathscr{L}$, dense in $C_0(\mathbb{R}_+\times \mathbb{Z}_+)$ and $P_tD\subset D$, this is a core, see e.g. \cite[Proposition 19.9]{Kallenberg}. The branching property \eqref{branchingprop} readily follows from \eqref{dualsemigroup}, indeed
\begin{align*}
\mathbb{E}_{(x+y,n+m)}\big[e^{-qX_t}r^{L_t}\big]&=e^{-xu_t(q,r)}f_t(q,r)^{n}e^{-yu_t(q,r)}f_t(q,r)^{m}\\
&=\mathbb{E}_{(x,n)}\big[e^{-qX_t}r^{L_t}\big]\mathbb{E}_{(y,m)}\big[e^{-qX_t}r^{L_t}\big].
\end{align*}
\end{proof}
We now establish Proposition \ref{prop:autonomous} where conditions are given for the coordinates of the bi-type branching process $(X,L)$ to be autonomous.
\begin{proof}[Proof of Proposition \ref{prop:autonomous}]
We only give the proof in case 1, since the two proofs are the same. We thus assume that $\pi(\ddr y,\ddr k)= \nu(\ddr y) \delta_0(\ddr k)$ for a measure  $\nu(\ddr y)$ on $(0,\infty)$ such that $\int_0^\infty (1\wedge y^2) \nu(\ddr x)<\infty$. In this case, the map
 $(q,r) \mapsto \Psi_c(q,r)$ does not depend on the variable $r$ and  can therefore be simply denoted by $q\mapsto \Psi_c(q,1)$. By \eqref{odeuf}, the map $t \mapsto u_t(0+,r)$ is solution of the o.d.e
$$\frac{\ddr }{\ddr t}u_t = -\Psi_c(u_t,1), \ u_0=0.$$
The condition $\int_{0}\frac{\ddr x}{|\Psi_c(x,1)|}=\infty$ is necessary and sufficient for this o.d.e. to have no solution other than the null function. Hence under this condition, $u_t(0+,r)=0$ for all $t\geq 0$  and $(L_t)$ then satisfies for any $n \in \Z^+$,
$$
\mathbb{E}_{n}[r^{L_t}]=f_t(0+,r)^n,
$$
where $f_t(0+,r)$ is the unique solution to the o.d.e:
$$
\frac{\ddr }{\ddr t} f_t(0+,r)
=-\Psi_d \big(0+,f_t(0+,r)\big),\ f_0(0+,r)=r,$$ meaning that $(L_t)$ is a discrete branching process with branching mechanism
$$\varphi:[0,1)\ni r\to \Psi_d(0+,r)=\sum_{k \geq 1} (r^{k+1}-r) \rho(\R_+,\{k\}) +d(1-r)-\mathrm{k}r.$$ 
\end{proof}

\section{Proof of Theorem \ref{thm1:main}}\label{sec:proofmainthm1}
Let $\psi$ be a branching mechanism, see \eqref{eq:branchingmechanismpsi}.
Recall the infinitesimal generator $\mathcal{G}$ of the CSBP$(\psi)$, see \eqref{eq:G}. We will establish in this section the algebraic intertwining relationship \eqref{eq:intertwiningHKG} between $\mathcal{G}$ and $\mathcal{H}$.
We start by identifying the triplet of the Esscher transform of $\psi$, $\psi_\lambda(\cdot)=\psi(\lambda+\cdot)-\psi(\lambda)$, for any $\lambda\geq 0$, see \eqref{eq:esscher}.
\begin{lem} For any $\lambda\in [0,\infty)$,
\[\psi_\lambda(q)=\frac{\sigma^2}{2} q^2 +\int_{(0,\infty)}   \nu(\ddr y) \expp{-\lambda y} [\expp{-q y}- 1 + q y]+\psi'(\lambda)q,\]
with 
\[\psi'(\lambda)=\sigma^2 \lambda-\gamma+\int_{(0,1)}(1-e^{-\lambda y})y\nu(\ddr y)-\int_{1}^{\infty}\nu(\ddr y)ye^{-\lambda y}.\]
\end{lem}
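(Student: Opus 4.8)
The statement follows by a direct computation from the definition $\psi_\lambda(q) := \psi(\lambda+q) - \psi(\lambda)$, the only genuine care being the integrability bookkeeping after re-centering the Lévy integral. I would proceed as follows.

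First I would substitute the explicit form \eqref{eq:branchingmechanismpsi} of $\psi$ into $\psi(\lambda+q) - \psi(\lambda)$. The killing constant $-\kappa$ cancels, the quadratic part yields $\tfrac{\sigma^2}{2}(\lambda+q)^2 - \tfrac{\sigma^2}{2}\lambda^2 = \tfrac{\sigma^2}{2}q^2 + \sigma^2\lambda q$, the drift yields $-\gamma q$, and the jump part yields $\int_{(0,\infty)}\big(e^{-(\lambda+q)y} - e^{-\lambda y} + qy\ind_{(0,1)}(y)\big)\nu(\ddr y)$, all the integrals involved being absolutely convergent because $\int_{(0,\infty)}(1\wedge y^2)\,\nu(\ddr y) < \infty$.

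Next I would re-center the jump integral around the non-truncated compensator, weighted by $e^{-\lambda y}$, via the elementary identity
\[
e^{-(\lambda+q)y} - e^{-\lambda y} + qy\ind_{(0,1)}(y) = e^{-\lambda y}\big(e^{-qy}-1+qy\big) + qy\big(\ind_{(0,1)}(y) - e^{-\lambda y}\big).
\]
The delicate point here — and the main obstacle, such as it is — is to check that the two resulting integrals converge \emph{separately}: near $0$ one has $e^{-\lambda y}(e^{-qy}-1+qy) = O(y^2)$ and $y(1-e^{-\lambda y}) = O(y^2)$, both integrable against $\nu$ since $\int_0^1 y^2\,\nu(\ddr y) < \infty$; near $+\infty$, using $\lambda>0$, both integrands are bounded by $e^{-\lambda y}$ times a polynomial, hence integrable against the finite measure $\nu\vert_{[1,\infty)}$. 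One must in particular avoid splitting off $\int y\ind_{(0,1)}(y)\,\nu(\ddr y)$ on its own, since that integral may well diverge.

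Collecting the terms linear in $q$ then gives the coefficient
\[
\sigma^2\lambda - \gamma + \int_{(0,1)}(1-e^{-\lambda y})\,y\,\nu(\ddr y) - \int_1^\infty y\, e^{-\lambda y}\,\nu(\ddr y),
\]
and it remains to recognise this as $\psi'(\lambda)$. That follows by differentiating \eqref{eq:branchingmechanismpsi} under the integral sign, using $\tfrac{\ddr}{\ddr q}\big(e^{-qy}-1+qy\ind_{(0,1)}(y)\big) = y\big(\ind_{(0,1)}(y) - e^{-qy}\big)$, which for $q$ in a compact neighbourhood of $\lambda>0$ is dominated on $(0,1)$ by a multiple of $y^2$ and on $[1,\infty)$ by a multiple of $y\,e^{-(\lambda/2)y}$, both $\nu$-integrable; dominated convergence legitimises the differentiation, and evaluating at $q=\lambda$ produces precisely the displayed coefficient, which is finite for $\lambda>0$. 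Putting the three steps together gives the claimed formula; the endpoint $\lambda=0$ is covered by the same computation when $\int_1^\infty y\,\nu(\ddr y)<\infty$ and is otherwise to be read as an identity in $\Rb$.
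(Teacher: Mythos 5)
Your proof is correct and follows the same route as the paper: expand $\psi(\lambda+q)-\psi(\lambda)$ directly from \eqref{eq:branchingmechanismpsi}, re-center the Lévy integral around the compensator $qy$ weighted by $e^{-\lambda y}$, and identify the resulting coefficient of $q$ as $\psi'(\lambda)$. The paper carries out this regrouping purely algebraically and without comment; you additionally supply the integrability bookkeeping that legitimises splitting the jump integral, justify the differentiation under the integral sign by domination, and correctly flag that the endpoint $\lambda=0$ of the stated range is only meaningful when $\int_1^\infty y\,\nu(\ddr y)<\infty$ (or as an identity in $\Rb$), a subtlety the paper leaves implicit.
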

\begin{proof}
\begin{align}
\label{eq:mecha} \psi_{\lambda}(q)&:=\psi(\lambda+q)- \psi(\lambda)\\
 &= \sigma^2(\lambda+q)^2/2 - \gamma (\lambda+q) - (\sigma^2\lambda^2/2  - \gamma \lambda\big) \nonumber
 \\
 &\qquad +\int_{(0,\infty)}  \nu(\ddr y) [\expp{-(q+\lambda) y}- 1 + (q+\lambda) y \ind_{(0,1)}(y)] 
\\
&= \frac{\sigma^2}{2} q^2 +\int_{(0,\infty)}   \nu(\ddr y) \expp{-\lambda y} [\expp{-q y}- 1 + q y]+ (\lambda \sigma^2-\gamma) q \\
&\qquad+\left(\int_{(0,\infty)} \nu(\ddr y) y \ind_{(0,1)}(y) \; (1-\expp{-\lambda y})-\int_{1}^{\infty}\!\!ye^{-\lambda y}\nu(\ddr y)\right)q,\\
&=\frac{\sigma^2}{2} q^2 +\int_{(0,\infty)}   \nu(\ddr y) \expp{-\lambda y} [\expp{-q y}- 1 + q y]+\psi'(\lambda)q.
\end{align}
\end{proof}
Recall the Poisson kernel $K$ with parameter $\lambda$ in \eqref{Poissonkernel} and the space $\mathcal{D}$, see \eqref{eq:domainofgeneratorL}. One can\footnote{see the forthcoming Equations \eqref{eq:Kf'}-\eqref{eq:Kf''} for the calculation of the two first derivatives.} easily check that for any $f\in \mathcal{D}$, $Kf\in D_c$, see \eqref{Dc}.
\\

Recall the operator $\mathcal{H}$ in \eqref{Hi}-\eqref{Hii} and define
\begin{equation}\label{R}
\mathcal{R}f(x, \ell):= \begin{cases}
\frac{\psi(\lambda)}{\lambda}\ell\left[f(x,\ell-1)-f(x,\ell)\right]&, \text{ if } \lambda\geq \rho,\\
-\psi(\lambda)x\left[f(x,\ell+1)-f(x,\ell)\right]&, \text{ if } \lambda\leq \rho.
\end{cases}
\end{equation}

Observe that the factor term in $\mathcal{R}$ lying in front of the incremental term, $f(x,\ell-1)-f(x,\ell)$ or $f(x,\ell+1)-f(x,\ell)$, is always non-negative. This will be important when interpreting those terms as jump rates. At this stage of the study, at which an algebraic relation is targeted, this plays however essentially no role. Indeed 
the following lemma shows that after Poissonization with parameter $\lambda x$, the form of the operator $\mathcal{R}$ does not depend on whether $\lambda\leq \rho$ or $\lambda>\rho$. This will allow us to study both cases simultaneously.
\begin{lem}\label{lem:KRf} For any $\lambda\in (0,\infty)$, 
\begin{align*}
K\mathcal{R}f(x)&=\sum_{\ell \geq 0}\expp{-\lambda x}\frac{(\lambda x)^{\ell}}{\ell!} \big(-\psi(\lambda)x\big)\left[f(x,\ell+1)-f(x,\ell)\right]\\
&=\sum_{\ell \geq 1}\expp{-\lambda x}\frac{(\lambda x)^{\ell}}{\ell!}\frac{\psi(\lambda)}{\lambda} \ell\left[f(x,\ell-1)-f(x,\ell)\right]. 
\end{align*}
\end{lem}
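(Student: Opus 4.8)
The plan is to verify the two claimed identities in Lemma \ref{lem:KRf} by direct computation, treating the two cases $\lambda \geq \rho$ and $\lambda < \rho$ separately at first and then showing the Poissonized expressions coincide. The key observation is that after applying the Poisson kernel $K(x,\ell) = e^{-\lambda x}(\lambda x)^\ell/\ell!$, a downward-shift operator and an upward-shift operator become interchangeable up to reindexing, because the Poisson weights satisfy the recurrence $\ell\, K(x,\ell) = \lambda x\, K(x,\ell-1)$, equivalently $K(x,\ell)\cdot \lambda x = (\ell+1) K(x,\ell+1)$.

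First I would handle the case $\lambda \leq \rho$, which is immediate: here $\mathcal{R}f(x,\ell) = -\psi(\lambda)x[f(x,\ell+1)-f(x,\ell)]$, so
\[
K\mathcal{R}f(x) = \sum_{\ell\geq 0} e^{-\lambda x}\frac{(\lambda x)^\ell}{\ell!}\,\big(-\psi(\lambda)x\big)\big[f(x,\ell+1)-f(x,\ell)\big],
\]
which is literally the first displayed line. Then I would perform the index shift to match the second line: writing $-\psi(\lambda)x\, e^{-\lambda x}\frac{(\lambda x)^\ell}{\ell!} = \frac{\psi(\lambda)}{\lambda}\cdot(-\lambda x)\, e^{-\lambda x}\frac{(\lambda x)^\ell}{\ell!}$ and using $\lambda x\, e^{-\lambda x}\frac{(\lambda x)^\ell}{\ell!} = (\ell+1)\, e^{-\lambda x}\frac{(\lambda x)^{\ell+1}}{(\ell+1)!}$, the sum over the $f(x,\ell+1)$ part reindexes (set $m = \ell+1$) to $\sum_{m\geq 1}\frac{\psi(\lambda)}{\lambda}\, m\, e^{-\lambda x}\frac{(\lambda x)^m}{m!}\,(-f(x,m))$, while the $f(x,\ell)$ part, via $\ell\, e^{-\lambda x}\frac{(\lambda x)^\ell}{\ell!} = \lambda x\, e^{-\lambda x}\frac{(\lambda x)^{\ell-1}}{(\ell-1)!}$, becomes $\sum_{\ell\geq 1}\frac{\psi(\lambda)}{\lambda}\,\ell\, e^{-\lambda x}\frac{(\lambda x)^\ell}{\ell!}\, f(x,\ell-1)$ after the same kind of manipulation; combining gives the third line.

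For the case $\lambda \geq \rho$ I would run the same computation in reverse: starting from $\mathcal{R}f(x,\ell) = \frac{\psi(\lambda)}{\lambda}\ell[f(x,\ell-1)-f(x,\ell)]$ one gets $K\mathcal{R}f(x) = \sum_{\ell\geq 1}e^{-\lambda x}\frac{(\lambda x)^\ell}{\ell!}\frac{\psi(\lambda)}{\lambda}\ell[f(x,\ell-1)-f(x,\ell)]$ (the $\ell=0$ term vanishes), which is the third displayed line; then the reverse reindexing produces the first line. So in both cases all three expressions agree. The argument is purely bookkeeping with Poisson weights; the only mild care needed is tracking the summation ranges (the $\ell=0$ term in the downward-shift form, the empty contribution there) and making sure the interchange of summation is justified, which follows since $f\in\mathcal{D}$ forces $\ell\,|f(x,\ell)|\to 0$ and the Poisson weights decay super-geometrically, so every series converges absolutely.

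I do not expect a genuine obstacle here; the statement is essentially the assertion that conjugating a birth operator by a Poisson kernel yields a death operator and vice versa, and the proof is the one-line recurrence $\ell K(x,\ell) = \lambda x K(x,\ell-1)$ applied twice. The only thing to be careful about in writing it cleanly is to present the reindexing symmetrically so that it is visibly an equivalence rather than two separate verifications, and to note explicitly that absolute convergence (from $f\in\mathcal D$) legitimizes splitting the difference $f(x,\ell\pm1)-f(x,\ell)$ into two separately summable pieces before reindexing.
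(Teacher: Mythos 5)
Your proposal is correct and uses the same core mechanism as the paper: the reindexing identity $x\,\frac{(\lambda x)^\ell}{\ell!} = \frac{\ell+1}{\lambda}\frac{(\lambda x)^{\ell+1}}{(\ell+1)!}$ applied to the Poisson weights. The paper keeps the bracket $f(x,\ell)-f(x,\ell+1)$ intact and performs one reindexing on the whole sum (so no absolute-convergence check or case-splitting is needed, and both cases of $\mathcal{R}$ are handled at once by proving the first and third displayed lines equal), whereas you split the difference into two separately summed series and reindex each, then rerun the argument in reverse for $\lambda\geq\rho$ — slightly longer but equivalent and correctly justified.
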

\begin{proof}[Proof of Lemma \ref{lem:KRf}]
\begin{align*}
&\sum_{\ell \geq 0}\expp{-\lambda x}\frac{(\lambda x)^{\ell}}{\ell!} \left(-x\psi(\lambda)\right)\left[f(x,\ell+1)-f(x,\ell)\right]\\
&=x\psi(\lambda)\sum_{\ell \geq 0}\expp{-\lambda x}\frac{(\lambda x)^{\ell}}{\ell!} \left[f(x,\ell)-f(x,\ell+1)\right]\\
&=\psi(\lambda)\sum_{\ell \geq 0}\frac{1}{\lambda}\expp{-\lambda x}\frac{(\lambda x)^{\ell+1}}{(\ell+1)!}(\ell+1) \left[f(x,\ell)-f(x,\ell+1)\right]\\
&=\frac{\psi(\lambda)}{\lambda}\sum_{\ell \geq 1}\expp{-\lambda x}\frac{(\lambda x)^{\ell}}{\ell!}\ell \left[f(x,\ell-1)-f(x,\ell)\right].
\end{align*}
\end{proof}

By linearity of the operator $\mathcal{G}$, we can separate the study into its local and  non-local parts. We start therefore with the generator of the Feller diffusion.

\subsection{The case of the Feller diffusion $(\Psi(q)=\frac{\sigma^2}{2}q^2-\gamma q)$}
We focus here on the setting of a pure diffusive CSBP process. The integral term in $\mathcal{G}$ vanishes and the branching mechanism shrinks to $\psi(q)=\frac{\sigma^2}{2}q^2-\gamma q$ with $\gamma\in \mathbb{R}$. Notice that $\frac{\psi(\lambda)}{\lambda}=\frac{\sigma^2}{2}\lambda-\gamma$, $\psi'(\lambda)=\sigma^2\lambda-\gamma$ and $\rho=\frac{2\gamma}{\sigma^2}\vee 0$. One has $$\mathcal{G}^{\psi_\lambda} f(x, \ell)=\mathcal{G}^{\psi_\lambda} f_{\ell}(x)= x \, \left[ \frac{\sigma^2}{2} f''(x, \ell)-(\sigma^2 \lambda - \gamma) f'(x,\ell)\right].$$
Introduce the operator $\mathcal{J}^{L}$ (the superscript $L$ is for \textit{local}): \begin{equation}\label{eq:Hrho}
\mathcal{J}^{L} f(x, \ell):=  \sigma^2 \ell  f'(x,\ell) + \ell \frac{\sigma^2}{2} \lambda \, \left[f(x,\ell+1)-f(x,\ell)\right].
\end{equation}
One has
\[\mathcal{H}f(x, \ell)= 
\mathcal{G}^{\psi_\lambda} f(x, \ell)+\mathcal{J}^{L} f(x, \ell)+\mathcal{R}f(x, \ell).\]

We now establish that $\mathcal{H}$ intertwines $\mathcal{G}$, namely we show that the following relationship holds:
\[\mathcal{G} K f(x)=K\mathcal{H}f(x).\]
To prove this, we shall expand the following expression
$$\mathcal{G} K f(x)= x \left(\frac{\sigma^2}{2} (Kf)''(x)+ \gamma (Kf)'(x)\right)$$
and compare it with
\begin{align}
K \mathcal{H}f(x) &=  \sum_{\ell \geq 0} \expp{-\lambda x} \frac{(\lambda x)^\ell}{\ell!}
\Big[\underbrace{ x \Big(\frac{\sigma^2}{2} f''(x, \ell)- (\sigma^2 \lambda - \gamma) f'(x,\ell)\Big)}_{G^{\psi_\lambda}f(x,\ell)}\Big ] \label{A}\\
& + \sum_{\ell \geq 0} \expp{-\lambda x}  \frac{(\lambda x)^\ell}{\ell!} \left(\underbrace{\ell \sigma^2 f'(x,\ell) + \ell \frac{\sigma^2}{2} \lambda \left[f(x,\ell+1)-f(x,\ell)\right]}_{\mathcal{J}^{L}f(x,\ell)}\right) \label{B}\\
&+ \underbrace{\sum_{\ell \geq 1}\expp{-\lambda x}\frac{(\lambda x)^{\ell}}{\ell!}\left[\ell\left(\frac{\sigma^2}{2} \lambda-\gamma\right)\right] \big(f(x,\ell-1)-f(x,\ell)\big)}_{K\mathcal{R}f(x)} \label{C}. 
\end{align}
Expanding $\mathcal{G} K f(x)$ requires to compute the first two derivatives of the expression $(Kf)(x)$:
\begin{align}
(Kf)'(x)=& \ \lambda \sum_{\ell \geq 0} \expp{-\lambda x} \left[ \frac{(\lambda x)^{\ell-1}}{(\ell-1)!} -\frac{(\lambda x)^{\ell}}{\ell!} \right] f(x,\ell) +
\sum_{\ell \geq 0} \expp{-\lambda x} \frac{(\lambda x)^\ell}{\ell !}  f'(x,\ell). \label{eq:Kf'}\\ 
(Kf)''(x) =& 
\ \lambda^2 \sum_{\ell \geq 0} \expp{-\lambda x} \left[\frac{(\lambda x)^{\ell-2}}{(\ell-2)!} -2 \frac{(\lambda x)^{\ell-1}}{(\ell-1)!} 
+ \frac{(\lambda x)^{\ell}}{\ell!} \right] f(x,\ell) \nonumber \\ 
&+2 \lambda \sum_{\ell \geq 0} \expp{-\lambda x} \left[ \frac{(\lambda x)^{\ell-1}}{(\ell-1)!} -\frac{(\lambda x)^{\ell}}{\ell!} \right]   f'(x,\ell) 
+ \sum_{\ell \geq 0} \expp{-\lambda x} \frac{(\lambda x)^\ell}{\ell !}  f''(x,\ell). \label{eq:Kf''}
\end{align}
Next, the expression for $\mathcal{G}Kf(x)$ can be splitted into three terms, that we distinguish according to the partial derivative of $f(x,\ell)$ (zeroth, first or second) that is used:\\

$\mathcal{G}K f(x)$
\begin{align}
& = x \frac{\sigma^2}{2}  \lambda^2 \sum_{\ell \geq 0} \expp{-\lambda x} \left[\left(\frac{(\lambda x)^{\ell-2}}{(\ell-2)!} -2 \frac{(\lambda x)^{\ell-1}}{(\ell-1)!} +   \frac{(\lambda x)^{\ell}}{\ell!} \right) 
+ \gamma \lambda  \left( \frac{(\lambda x)^{\ell-1}}{(\ell-1)!} -\frac{(\lambda x)^{\ell}}{\ell!} \right) \right] f(x,\ell)   \label{I}\\ 
& + x \sum_{\ell \geq 0} \expp{-\lambda x} \left[  \sigma^2  \lambda \left( \frac{(\lambda x)^{\ell-1}}{(\ell-1)!}-\frac{(\lambda x)^{\ell}}{\ell!} \right)   
+ \gamma  \frac{(\lambda x)^\ell}{\ell !}  \right] f'(x,\ell)  \label{II} \\
&+ x \left[ \frac{\sigma^2}{2} \sum_{\ell \geq 0} \expp{-\lambda x} \frac{(\lambda x)^\ell}{\ell !} \right] f''(x,\ell)\label{III}.
\end{align}

In \eqref{III}, the term in factor of $f''(x,\ell)$ identifies with the similar term in $K \mathcal{H}f(x)$ in \eqref{A} so there is nothing to do here.

To transform \reff{I}, the idea is to separate it into three sums  and  reindex them in a convenient manner. We rewrite \reff{I} as \\
$$x\sum_{\ell \geq 0} \expp{-\lambda x} \left[ \frac{\sigma^2}{2}  \lambda^2 \left(\frac{(\lambda x)^{\ell-2}}{(\ell-2)!} -\frac{(\lambda x)^{\ell-1}}{(\ell-1)!}\right) +  \frac{\sigma^2}{2}  \lambda^2 \left(\frac{(\lambda x)^{\ell}}{\ell!}-\frac{(\lambda x)^{\ell-1}}{(\ell-1)!} \right) 
+ \gamma \lambda  \left( \frac{(\lambda x)^{\ell-1}}{(\ell-1)!} -\frac{(\lambda x)^{\ell}}{\ell!} \right) \right] f(x,\ell).$$
Therefore
\begin{align*} 
&\reff{I}\\
&=x\sum_{\ell \geq 0} \expp{-\lambda x} \left[- \frac{\sigma^2}{2}  \lambda^2 \frac{(\lambda x)^{\ell}}{\ell!}\left[f(x,\ell+1)-f(x,\ell)\right]+
\frac{\sigma^2}{2}  \lambda^2 \frac{(\lambda x)^{\ell-1}}{(\ell-1)!}\left[f(x,\ell+1)-f(x,\ell)\right] \right.
\\
&\qquad \qquad \qquad \qquad \left. +\gamma \lambda \frac{(\lambda x)^{\ell}}{\ell!}\left[f(x,\ell+1)-f(x,\ell)\right]\right]
\\
&=x\sum_{\ell \geq 0} \expp{-\lambda x} \frac{(\lambda x)^{\ell}}{\ell!} \left(\gamma\lambda- \frac{\sigma^2}{2}  \lambda^2\right) \left[f(x,\ell+1)-f(x,\ell)\right]+x\frac{\sigma^2}{2}   \lambda^2 \sum_{\ell \geq 0}\expp{-\lambda x}\frac{(\lambda x)^{\ell-1}}{(\ell-1)!}\ell\left[f(x,\ell+1)-f(x,\ell)\right].\\
&=\underbrace{x\sum_{\ell \geq 0} \expp{-\lambda x} \frac{(\lambda x)^{\ell}}{\ell!} \left(\gamma\lambda- \frac{\sigma^2}{2}   \lambda^2\right) \left[f(x,\ell+1)-f(x,\ell)\right]}_{\mathcal{K}\mathcal{R}f(x)}+ \sum_{\ell \geq 0}\expp{-\lambda x}\frac{(\lambda x)^{\ell}}{\ell!}\frac{\sigma^2}{2}   \lambda \ell\left[f(x,\ell+1)-f(x,\ell)\right].\\
\end{align*} 
In the same way:
\begin{align*}
\reff{II}&=\sum_{\ell \geq 0}\expp{-\lambda x}\frac{(\lambda x)^{\ell}}{\ell!}\left[\sigma^{2} \lambda x \left(\frac{\ell}{\lambda x}-\ell\right)+\gamma x\right]f'(x,\ell)\\
&=\sum_{\ell \geq 0}\expp{-\lambda x}\frac{(\lambda x)^{\ell}}{\ell!}\sigma^{2}\ell f'(x,\ell) +\sum_{\ell \geq 0}\expp{-\lambda x}\frac{(\lambda x)^{\ell}}{\ell!}\big(\gamma-\sigma^{2}\lambda\big)xf'(x,\ell).
\end{align*}
Recall $\mathcal{J}^{L}$ and $\mathcal{G}^{\psi_\lambda}$. We get
\begin{align*}
\mathcal{G}Kf(x)=&\reff{I}+\reff{II}+\reff{III}\\
= &\mathcal{K}\mathcal{R}f(x)+
\overbrace{\sum_{\ell \geq 0}\expp{-\lambda x}\frac{(\lambda x)^{\ell}}{\ell!}\frac{\sigma^2}{2}   \lambda \ell\left[f(x,\ell+1)-f(x,\ell)\right]+\sum_{\ell \geq 0}\expp{-\lambda x}\frac{(\lambda x)^{\ell}}{\ell!}\sigma^{2}\ell f'(x,\ell)}^{\mathcal{K}\mathcal{J}^{L}f(x)}\\
&\qquad +\underbrace{\sum_{\ell \geq 0}\expp{-\lambda x}\frac{(\lambda x)^{\ell}}{\ell!}\big(\gamma-\sigma^{2}\lambda\big)xf'(x,\ell)+\sum_{\ell \geq 0}\expp{-\lambda x}\frac{(\lambda x)^{\ell}}{\ell!}\frac{\sigma^{2}}{2}xf''(x,\ell)}_{K\mathcal{G}^{\psi_\lambda}f(x)}\\
&=K\big(\mathcal{R}+\mathcal{J}^{L}+\mathcal{G}^{\psi_\lambda}\big)f(x)\\
&=K\mathcal{H}f(x).
\end{align*}
\subsection{The case of pure jump CSBPs} 
We now deal with CSBP processes with no local part and assume $\sigma=\gamma=0$. Define
\begin{equation}\label{JNL}
\mathcal{J}^{NL}f(x,\ell):=\ell\sum_{k \geq 0} \int_{(0,\infty)}   \nu(\ddr y) y \expp{-\lambda y} \frac{(\lambda y)^k}{(k+1)!} [f(x+y,\ell+k)-f(x,\ell)]. 
\end{equation}
Notice that $\mathcal{H}=\mathrm{c}+\mathcal{G}^{\psi_\lambda}+\mathcal{J}^{NL}+\mathcal{R}$, with $\mathrm{c}f(x,\ell):=-\kappa x f(x,\ell)$.
We are going to compute \begin{equation}\label{eq:GKf}
\mathcal{G}Kf(x)=  x\int_{(0,\infty)} \nu(\ddr y) [Kf(x+y)-Kf(x)- y \ind_{(0,1)}(y) (Kf)'(x)]-\kappa x Kf(x).
\end{equation}
Plainly, $-\kappa x Kf(x)=-\sum_{\ell=0}^{\infty}\frac{(\lambda x)^{\ell}}{\ell !}\kappa xf(x,\ell)= K(\mathrm{c}f)(x)$, so that the killing term in $\mathcal{G}$ matches with a killing term along the continuous component of $\mathcal{H}$. We now study the integrand in \eqref{eq:GKf}. Write the Poisson kernel evaluated at $x+y$:
$$ Kf(x+y)= \sum_{\ell \geq 0} \expp{-\lambda (x+y)} \frac{(\lambda (x+y))^\ell}{\ell !} f(x+y, \ell)$$
and its derivative evaluated at $x$:
$$ (Kf)'(x)= 
- \sum_{\ell \geq 0}  \lambda  \expp{-\lambda x} \frac{(\lambda x)^\ell}{\ell !} f(x, \ell) +
\sum_{\ell \geq 1} \lambda  \expp{-\lambda x} \frac{(\lambda x)^{\ell-1}}{(\ell-1)!} f(x, \ell) +
\sum_{\ell \geq 0}   \expp{-\lambda x} \frac{(\lambda x)^\ell}{\ell !} f'(x, \ell).
$$
The expression for $x[Kf(x+y)-Kf(x)- y \ind_{(0,1)}(y) (Kf)'(x)]$ then splits in three basic blocks that are:
\begin{align}&x[Kf(x+y)-Kf(x)- y \ind_{(0,1)}(y) (Kf)'(x)]\\
&= \sum_{\ell \geq 0} \expp{-\lambda x} x \left[ \expp{-\lambda y} \frac{(\lambda (x+y))^\ell}{\ell !} f(x+y, \ell)-
\frac{(\lambda x)^\ell}{\ell !} f(x, \ell)\right] \label{eq:CB-01}\\
&\quad - x y \ind_{(0,1)}(y) 
\left[
- \sum_{\ell \geq 0}  \lambda  \expp{-\lambda x} \frac{(\lambda x)^\ell}{\ell !} f(x, \ell) +
\sum_{\ell \geq 1} \lambda  \expp{-\lambda x} \frac{(\lambda x)^{\ell-1}}{(\ell-1)!} f(x, \ell) 
\right] \label{eq:CB-02}\\
&\quad - \sum_{\ell \geq 0}   \expp{-\lambda x} \frac{(\lambda x)^\ell}{\ell !}  xy \ind_{(0,1)}(y) 
f'(x, \ell). \label{eq:CB-03}
\end{align}
We expand \reff{eq:CB-01} using Newton binomial formula :
\begin{align}
& \sum_{\ell \geq 0} \expp{-\lambda x} x 
\left[ \expp{-\lambda y} \frac{\lambda^\ell \sum_{0 \leq  k \leq \ell} {\ell \choose k} x^{\ell-k} y^{k}}{\ell !} f(x+y, \ell)-
(\expp{-\lambda y}+ 1-\expp{-\lambda y}) \frac{(\lambda x)^\ell}{\ell !} f(x, \ell)\right] \nonumber \\
&= \sum_{\ell \geq 0} \expp{-\lambda x} \frac{(\lambda x)^{\ell}}{\ell!} 
\quad x \expp{-\lambda y}  [f(x+y, \ell)-f(x, \ell)] \label{eq:CB-1} \\
&+ \sum_{\ell \geq 1}\sum_{1 \leq  k \leq \ell}  \expp{-\lambda x}  \frac{(\lambda x)^{\ell+1-k}}{(\ell+1-k)!}  (\ell+1-k)y 
\expp{-\lambda y} \frac{(\lambda y)^{k-1}}{k!} 
f(x+y, \ell)   \label{eq:CB-11}  \\
&-  \sum_{\ell \geq 0} \expp{-\lambda x} \frac{(\lambda x)^{\ell}}{\ell!}  x (1-\expp{-\lambda y})  f(x, \ell) \label{eq:CB-comp}.
\end{align}

The second term \reff{eq:CB-11}  can be rewritten by a reindexation of the double sum as follows:
\begin{align}
&\sum_{\ell \geq 1}\sum_{1 \leq  k \leq \ell}  \expp{-\lambda x}   \frac{(\lambda x)^{\ell+1-k}}{(\ell+1-k)!} (\ell+1-k) y 
\expp{-\lambda y} \frac{(\lambda y)^{k-1}}{k!} 
f(x+y, \ell) \nonumber \\
=&
\sum_{\ell \geq 0,  k \geq 0} \expp{-\lambda x} \frac{(\lambda x)^{\ell}}{\ell!}  \, \ell 
y 
\expp{-\lambda y} \frac{(\lambda y)^{k}}{(k+1)!} 
f(x+y, \ell+k).  \label{eq:CB-2} 
\end{align}

Recollecting our findings so far, one has\\
\\
$x[Kf(x+y)-Kf(x)- y \ind_{(0,1)}(y) (Kf)'(x)]$
\begin{align}
=&\sum_{\ell \geq 0}   \expp{-\lambda x} \frac{(\lambda x)^\ell}{\ell!}  x \expp{-\lambda y}  [f(x+y,\ell)-f(x,\ell)]\label{53}\\
&+ \sum_{\ell \geq 0}  \expp{-\lambda x} \frac{(\lambda x)^\ell}{\ell !} \; \; \ell  \; \sum_{k \geq 0} y \expp{-\lambda y} \frac{(\lambda y)^k}{(k+1)!} f(x+y,\ell+k) \label{54}\\
&-\sum_{\ell \geq 0}  \expp{-\lambda x} \frac{(\lambda x)^\ell}{\ell !} x(1-\expp{-\lambda y})f(x,l)\label{55}\\
&-xy 1_{[0,1]}(y)\left(-\sum_{\ell \geq 0}\lambda \expp{-\lambda x}\frac{(\lambda x)^\ell}{\ell !} f(x,\ell)+\sum_{\ell \geq 1}\lambda \expp{-\lambda x}\frac{(\lambda x)^{\ell-1}}{(\ell-1) !} f(x,\ell)\right)\label{56}\\
&-\sum_{\ell \geq 0}  \expp{-\lambda x} \frac{(\lambda x)^\ell}{\ell !} xy1_{[0,1]}(y)f'(x,l)\label{57}.
\end{align}
Reordering the terms by gathering \eqref{53} and \eqref{57}, \eqref{54} with the second part of \eqref{56} and \eqref{55} with the first part,  we get:
\\
\\ 
$x[Kf(x+y)-Kf(x)- y \ind_{(0,1)}(y) (Kf)'(x)]$
\begin{align}
=&\sum_{\ell \geq 0}   \expp{-\lambda x} \frac{(\lambda x)^\ell}{\ell!}  x \expp{-\lambda y}  [f(x+y,\ell)-f(x,\ell)] \label{0}\\
&-\sum_{\ell \geq 0} \expp{-\lambda x}\frac{(\lambda x)^{\ell}}{\ell!} xy 1_{(0,1]}(y) f'(x,\ell) \label{4}\\
&+ \sum_{\ell \geq 0}  \expp{-\lambda x} \frac{(\lambda x)^\ell}{\ell !} \; \; \ell  \; \sum_{k \geq 0} y \expp{-\lambda y} \frac{(\lambda y)^k}{(k+1)!} f(x+y,\ell+k) \label{2}\\
&-\sum_{\ell \geq 1}\lambda \expp{-\lambda x}xy 1_{[0,1]}(y)\frac{(\lambda x)^{\ell-1}}{(\ell-1) !} f(x,\ell)\label{3} \\
&-\sum_{\ell \geq 0}  \expp{-\lambda x} \frac{(\lambda x)^\ell}{\ell!} x(1-\expp{-\lambda y}-\lambda y 1_{[0,1]}(y))f(x,\ell)\label{1}.
\end{align}
Equation \reff{4} can be rewritten as
\begin{align*}
\reff{4} &= -x\sum_{\ell \geq 0}\expp{-\lambda x}\frac{(\lambda x)^{\ell}}{\ell!}\left(\expp{-\lambda y}+1-\expp{-\lambda y}\right)y1_{(0,1]}(y)f'(x,\ell)\\
&=-x\sum_{\ell \geq 0}\expp{-\lambda x}\frac{(\lambda x)^{\ell}}{\ell!}\left(\expp{-\lambda y}y1_{(0,1]}(y)f'(x,\ell)+(1-\expp{-\lambda y})y1_{(0,1]}(y)f'(x,\ell)\right).
\end{align*}
Integrating  \reff{0}+\reff{4} with respect to $\nu$ yields 
$$x\sum_{\ell \geq 0}\expp{-\lambda x}\frac{(\lambda x)^{\ell}}{\ell!}\int_{(0,\infty}\expp{-\lambda y}\nu(\ddr y)\left(f(x+y,\ell)-f(x,\ell)-y1_{(0,1]}(y)f'(x,\ell)\right)+\psi'(\lambda)f'(x,\ell)=KG^{\psi_{\lambda}}f(x).$$
Notice that 

\[\reff{3}=-\sum_{\ell \geq 0}e^{-\lambda x}y\ind_{(0,1)}(y)\ell \frac{(\lambda x)^{\ell}}{\ell!}f(x,\ell).\]
Using the following identity
$$ y\expp{-\lambda y} \sum_{k \geq 0}  \frac{(\lambda y)^{k}}{(k+1)!}=
\expp{-\lambda y} \frac{1}{\lambda} \sum_{k \geq 1}  \frac{(\lambda y)^{k}}{k!} =\frac{1}{\lambda} (1- \expp{-\lambda y}).
$$
one can rewrite \reff{2}+\reff{3} as follows:

\begin{align}
\reff{2}+\reff{3}&=\sum_{\ell \geq 0}  \expp{-\lambda x} \frac{(\lambda x)^\ell}{\ell !} \; \; \ell  \left[ \sum_{k \geq 0} y \expp{-\lambda y} \frac{(\lambda y)^k}{(k+1)!} f(x+y,\ell+k)- y 1_{(0,1]}(y)f(x,\ell)\right]\\
&=\sum_{\ell \geq 0}  \expp{-\lambda x} \frac{(\lambda x)^\ell}{\ell !} \; \; \ell \sum_{k \geq 0} y \expp{-\lambda y} \frac{(\lambda y)^k}{(k+1)!}\left(f(x+y,\ell+k)-f(x,\ell)\right) \label{gen2}
\\
&\qquad + \sum_{\ell \geq 0}  \expp{-\lambda x} \frac{(\lambda x)^\ell}{\ell !} \; \; \ell \left(\frac{1-\expp{-\lambda y}}{\lambda}-y1_{(0,1]}(y)\right)f(x,\ell)
\\
&=\reff{gen2}+x\sum_{\ell' \geq 0}e^{-\lambda x}\frac{(\lambda x)^{\ell'}}{\ell'!}\left[1-\expp{-\lambda y}-\lambda y 1_{(0,1]}(y)\right]f(x, \ell'+1).
\end{align}
We sum \reff{2}+\reff{3}+\reff{1} and obtain:
\begin{equation}\label{finalidentity}
\reff{2}+\reff{3}+\reff{1}= \reff{gen2}-x\left(\expp{-\lambda y}-1+\lambda y 1_{(0,1]}(y)\right)\sum_{\ell\geq 0}e^{-\lambda x}\frac{(\lambda x)^{\ell}}{\ell !}[f(x, \ell+1)-f(x,\ell)].\end{equation}

By integrating with respect to $\nu$, the term \reff{gen2}, one has 
$$\sum_{\ell \geq 0}  \expp{-\lambda x} \frac{(\lambda x)^\ell}{\ell !} \; \; \ell  \; \sum_{k \geq 0} \int_{(0,\infty)}   \nu(\ddr y) y \expp{-\lambda y} \frac{(\lambda y)^k}{(k+1)!} [f(x+y,\ell+k)-f(x,\ell)],$$
and we recognize the cross jump term $\mathcal{J}^{NL}f(x,\ell)$, see  \eqref{JNL}.\\

By integrating with respect to $\nu$, the second part of the right-hand side in \eqref{finalidentity}, we get 
\begin{equation}\label{extrabirth}
-x\psi(\lambda)\sum_{\ell\geq 0}e^{-\lambda x}\frac{(\lambda x)^{\ell}}{\ell!}[f(x,\ell+1)-f(x,\ell)]=K\mathcal{R}f(x).
\end{equation}
Finally, we have established
\[\mathcal{G}Kf(x)=K\big(\mathrm{c}+\mathcal{G}^{\psi_\lambda}+\mathcal{J}^{NL}+\mathcal{R}\big)f(x)=K\mathcal{H}f(x).\]
\begin{proof}[Proof of Theorem \ref{thm1:main}] Only remains to gather the diffusive part and the jump part, this is a direct consequence of linearity and the fact that $\mathcal{J}^{L}+\mathcal{J}^{NL}=\mathcal{J}$. One has indeed
\[\mathcal{H}=\mathrm{c}+\mathcal{G}^{\psi_\lambda}+\mathcal{J}^{L}+\mathcal{J}^{NL}+\mathcal{R},\]
thus 
\[K\mathcal{H}f=K\left(\mathrm{c}+\mathcal{G}^{\psi_\lambda}+\mathcal{J}^{L}+\mathcal{J}^{NL}+\mathcal{R}\right)f=\mathcal{G}Kf.\]
\end{proof}

\section{Proofs of Corollary \ref{cor:identifyjointbranchingmech}, Theorem \ref{thm:main} and Proposition \ref{prop:explosion}}
Let $\psi$ be a branching mechanism. Let $\lambda>0$ and $(X^{\lambda},L^{\lambda})$ be the two-type branching process with generator $\mathcal{H}$, see \eqref{Hi} and \eqref{Hii}. We start by studying the joint branching mechanism $\mathbf{\Psi}=(\Psi_c,\Psi_d)$ and then establish the skeleton decomposition, that is to say Theorem \ref{thm:main}. Proposition \ref{prop:explosion} about the explosion will be a consequence and is proved at the end of the section. We stress to the reader that the proof of Theorem \ref{thm:main} does not appeal to Corollary \ref{cor:identifyjointbranchingmech} but only to Theorem \ref{thm1:main} and Theorem~\ref{thm:branchingbitype}.  
\subsection{Proof of Corollary \ref{cor:identifyjointbranchingmech}: study of the joint branching mechanism}\label{sec:proofprop1}
Notice that $\mathbf{\Psi}$ depends on $\lambda$. For any $q\in (0,\infty)$, define on $[0,\infty)$, the map $e_q(x):=e^{-qx}$. The identities \eqref{Psic-(i)} and \eqref{Psic-(ii)}: \[\Psi_c(q,r)=\psi_\lambda(q)-\kappa \text{ if } \lambda\geq \rho \text{ and }  \Psi_c(q,r)=\psi_\lambda(q)-\psi(\lambda)(r-1)-\kappa \text{ if } \lambda<\rho,\]
follows readily from the definition of the generator $\mathcal{H}$, see \eqref{Hi}-\eqref{Hii} and the fact that $\mathcal{G}^{\psi_\lambda}e_q(x)=x\psi_{\lambda}(q)e_q(x)=-\psi_{\lambda}(q)\frac{\ddr }{\ddr q}e_q(x)$. The extra birth term when $\lambda<\rho$ comes from the last term in \eqref{Hii}.  The expression for $\Psi_d$ given by \eqref{Psid-(i)}-\eqref{Psid-(ii)} according whether $\lambda\geq \rho$ or $\lambda<\rho$ follows by definition of the measure $\rho(\ddr y,\ddr k)$. The extra death term when $\lambda\geq \rho$ comes from the last term in \eqref{Hi}.
The identity 
\begin{equation}\label{eq:lemrelationship-psiPsi}
\psi\big(q+\lambda(1-r)\big)=\Psi_c(q,r)+\lambda\Psi_d(q,r).
\end{equation}
is a consequence of the algebraic intertwining relationship \eqref{eq:intertwiningHKG} applied to the function $f_{q,r}(x,\ell)=e^{-qx}r^{\ell}$. Indeed, one has $Kf_{q,r}(x)=e^{-(q+\lambda(1-r))x}=e_{q+\lambda(1-r)}(x)$, hence on the one hand
\[\mathcal{G}Kf_{q,r}(x)=\mathcal{G}e_{q+\lambda(1-r)}(x)=x\psi\big(q+\lambda(1-r)\big)e_{q+\lambda(1-r)}(x).\]
On the other hand, recall 
\[\mathcal{H}f_{q,r}(x,\ell)=x\Psi_c(q,r)e^{-qx}r^{\ell}+e^{-qx}\ell r^{\ell -1}\Psi_d(q,r),\]
see \eqref{eq:Lfqr}, and thus
\begin{align*}
K\mathcal{H}f_{q,r}(x)&=\sum_{\ell\geq 0}e^{-\lambda x}\frac{(\lambda x)^{\ell}}{\ell !}\mathcal{H}f_{q,r}(x,\ell)\\
&=\Psi_c(q,r)xe^{-(\lambda(1-r)+q)x}+e^{-qx}\Psi_d(q,r)e^{-\lambda x}\frac{\ddr}{\ddr r}e^{\lambda xr}\\
&=xe_{q+\lambda(1-r)}(x)\big(\Psi_c(q,r)+\lambda \Psi_d(q,r)\big).
\end{align*}
Now the equality $\mathcal{G}Kf_{q,r}=K\mathcal{H}f_{q,r}$ entails \eqref{eq:lemrelationship-psiPsi}. The identities \eqref{Psid-(i)2} and \eqref{Psid-(ii)2} follow readily by this relationship and \eqref{Psic-(i)} and \eqref{Psic-(ii)}. \qed

\subsection{Proof of Theorem \ref{thm:main}: intertwining of semigroups} \label{sec:proofmainthm}
Denote by $(Q_t)$ the semigroup of a CSBP$(\psi)$.
Call $(P_t)$ the semigroup of the process $(X^{\lambda},L^{\lambda})$.
\begin{lem}[Intertwining of semigroups]\label{lem:intertwinedsemigroup}\
\vspace*{1mm}
For all $f\in C_0(\mathbb{R}_+\times \mathbb{Z}_+)$,
\begin{equation}\label{intertwinedsemigroupfqr}KP_tf(x)=Q_tKf(x), \qquad \forall t,x\geq 0.
\end{equation} 
\end{lem}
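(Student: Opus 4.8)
The plan is to upgrade the \emph{algebraic} intertwining of generators, Theorem~\ref{thm1:main}, to an intertwining of \emph{semigroups}, i.e. to show $KP_tf = Q_tKf$ for all $f\in C_0(\mathbb{R}_+\times\mathbb{Z}_+)$ and all $t\ge 0$. The natural strategy is to first prove the identity on a convenient class of test functions and then extend by density, using the Feller property of both semigroups (Proposition~\ref{Fellerprop} for $(P_t)$, and the standard Feller property of the CSBP recalled in Section~\ref{sec:csbps}). I would take the test class to be $D=\mathrm{Vect}\{f_{q,r}:q\in(0,\infty),\,r\in(0,1)\}$, which is a core for $\mathscr{L}$ on which the dynamics are explicit: by \eqref{dualsemigroup}, $P_tf_{q,r}(x,\ell)=e^{-xu_t(q,r)}f_t(q,r)^\ell$, and one checks directly that $K$ applied to this gives $KP_tf_{q,r}(x)=\exp\big(-x\,[\,u_t(q,r)+\lambda(1-f_t(q,r))\,]\big)$. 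So the whole statement, restricted to $D$, reduces to the scalar identity
\begin{equation}\label{eq:scalarID}
u_t(q,r)+\lambda\big(1-f_t(q,r)\big)=u_t\big(q+\lambda(1-r)\big),
\end{equation}
where the right-hand side is the CSBP flow: $\tfrac{\ddr}{\ddr t}u_t(p)=-\psi(u_t(p))$, $u_0(p)=p$.

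The key step is therefore to prove \eqref{eq:scalarID}, and this is exactly where Corollary~\ref{cor:identifyjointbranchingmech} — or rather the relation \eqref{eq:relationship-psiPsi}, $\psi(q+\lambda(1-r))=\Psi_c(q,r)+\lambda\Psi_d(q,r)$ — does the work. Setting $v_t:=u_t(q,r)+\lambda(1-f_t(q,r))$, I would differentiate using the ODE system \eqref{odeuf2}: $\tfrac{\ddr}{\ddr t}v_t=-\Psi_c(u_t,f_t)-\lambda\,\Psi_d(u_t,f_t)=-\psi\big(u_t+\lambda(1-f_t)\big)=-\psi(v_t)$, with $v_0=q+\lambda(1-r)$. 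By uniqueness of the solution to $\dot v=-\psi(v)$ (the CSBP flow, which is well-posed by the Grey-type non-explosion discussion, or more simply because $\psi$ is locally Lipschitz on $(0,\infty)$ and we only need the identity up to the finite explosion time, handled via the one-point compactification), $v_t=u_t(q+\lambda(1-r))$, which is \eqref{eq:scalarID}. Combined with the previous paragraph this gives $KP_tf_{q,r}(x)=Q_tKf_{q,r}(x)$ for every $f_{q,r}$, hence by linearity for every $f\in D$.

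Finally I would extend from $D$ to all of $C_0(\mathbb{R}_+\times\mathbb{Z}_+)$. Since $D$ is dense in $C_0(\mathbb{R}_+\times\mathbb{Z}_+)$ for the uniform norm (Stone--Weierstrass, as already used in the proof of Lemma~\ref{lem:existenceMP}), and both $f\mapsto KP_tf(x)$ and $f\mapsto Q_tKf(x)$ are linear and continuous for the sup-norm — because $\|P_tf\|_\infty\le\|f\|_\infty$, $\|Q_tg\|_\infty\le\|g\|_\infty$, and $K$ is a Markov kernel so $\|Kf\|_\infty\le\|f\|_\infty$ — the identity \eqref{intertwinedsemigroupfqr} passes to the closure. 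Care must be taken at the cemetery state: the definition of $\Lambda$ in \eqref{eq:defLambda} sends $x=\infty$ to $\delta_\Delta$, consistently with $\mathscr{L}f(\Delta)=0$ and $K(x,\cdot)$ converging weakly to $\delta_\Delta$ as $x\to\infty$ (i.e. $Kf(x)\to f(\Delta)=0$ for $f\in C_0$), so the endpoint behaviour is automatically compatible; this is the only point requiring a word of justification. The main obstacle, then, is not any single hard estimate but rather being careful that the scalar ODE argument for \eqref{eq:scalarID} is valid globally (or is correctly interpreted through the compactification when the CSBP explodes), and that the density extension is phrased at the level of the kernel $\Lambda$ including its action at $\infty$.
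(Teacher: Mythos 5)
Your proof is correct, but it takes a genuinely different route than the paper's. The paper works directly at the operator level: for $f\in D$ (the core that $P_t$ preserves, Proposition~\ref{Fellerprop}) it shows that $t\mapsto KP_tf$ solves the backward Kolmogorov equation $\tfrac{\ddr}{\ddr t}g_t=\mathcal{G}g_t$ with initial datum $Kf$ — using the algebraic identity $K\mathscr{L}=\mathcal{G}K$ from Theorem~\ref{thm1:main} and an interchange of $\tfrac{\ddr}{\ddr t}$ with $K$ — and then invokes uniqueness of solutions of that backward equation (satisfied by $Q_tKf$). You instead plug in $f_{q,r}$, exploit the explicit cumulant representation \eqref{dualsemigroup}, and reduce the identity to the scalar ODE fact $u_t(q,r)+\lambda(1-f_t(q,r))=u_t(q+\lambda(1-r))$, which you prove by differentiation and the relation \eqref{eq:relationship-psiPsi} from Corollary~\ref{cor:identifyjointbranchingmech}. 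Both routes feed on the same source (the algebraic intertwining of generators) and both finish with the same density argument on $D$, but they package the uniqueness step differently: the paper invokes well-posedness of the operator-valued backward Kolmogorov equation, while you invoke well-posedness of the one-dimensional ODE $\dot v=-\psi(v)$ on $(0,\infty)$. Your version is arguably more self-contained — the scalar ODE uniqueness is transparent given that $v_t=u_t(q,r)+\lambda(1-f_t(q,r))$ stays in $(0,\infty)$ by Lemma~\ref{lem:uniquenessode}, and no derivative/kernel interchange is required — at the cost of making the argument depend on Corollary~\ref{cor:identifyjointbranchingmech}, which the paper deliberately avoids in this proof (though there is no circularity, since that corollary is itself a consequence of Theorem~\ref{thm1:main}). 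Your closing remark on the cemetery point is not needed for this lemma, which concerns $x\in\mathbb{R}_+$ only; the extension to $\bar{\mathbb{R}}_+$ via the kernel $\Lambda$ is the content of Lemma~\ref{lem:intertwinedsemigroup-Lambda}.
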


\begin{proof}
Recall Proposition \eqref{Fellerprop} and the definition of the cores $D$ and $D_c$ in \eqref{eq:coreD} and \eqref{Dexpoc} respectively. Let $f\in D$. Note that $Kf\in D_c$ and the map $(t,x)\mapsto Q_t(Kf)(x)$ is the unique solution to the \textit{backward} Kolmogorov equation:
\[\frac{\ddr}{\ddr t }Q_tKf=\mathcal{G}Q_tKf,\ Q_0Kf=Kf,\]
see Subsection \ref{sec:csbps}. Similarly,  the semigroup $(P_t)$ of $\mathbf{X}=(X^{\lambda},L^{\lambda})$  satisfies the \textit{backward} Kolmogorov equation:
\[\frac{\ddr}{\ddr t }P_tf=\mathscr{L}P_tf,\ P_0f=f.\]
Proposition \ref{Fellerprop} ensures that $P_tf\in D\subset \mathcal{D}$. Then the algebraic relationship given by Theorem \ref{thm1:main} ensures that for any $t\geq 0$, $K\mathscr{L}P_tf=\mathcal{G}KP_tf$. The derivative being in a uniform sense, one can interchange it with the Poisson kernel $K$ and we see that
\[\frac{\ddr}{\ddr t }KP_tf=K\frac{\ddr}{\ddr t }P_tf=K\mathscr{L}P_tf=\mathcal{G}KP_tf.\]
Last, since $KP_0f=Kf$, by uniqueness of the solution of the backward equation solved by $Q_t$, we have for all $t\geq 0$,
\[KP_tf=Q_tKf.\]
The fact that this holds for any $f\in C_0(\mathbb{R}_+\times \mathbb{Z}_+)$ follows by density. 
\end{proof}

The intertwining  of semigroups immediately extends to the kernel $\Lambda$ defined in \eqref{eq:defLambda}.

\begin{lem}[Intertwining of semigroups]\label{lem:intertwinedsemigroup-Lambda}\
\vspace*{1mm}
For all $f\in C_0(\mathbb{R}_+\times \mathbb{Z}_+)$,
\begin{equation}\label{intertwinedsemigroupfqr-Lambda}\Lambda P_tf(x)=Q_t \Lambda f(x), \qquad \forall t \in  \R_{+} ,x\in \R_{+} \cup \{\infty\}.
\end{equation} 
\end{lem}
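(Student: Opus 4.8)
The plan is to deduce \eqref{intertwinedsemigroupfqr-Lambda} from Lemma~\ref{lem:intertwinedsemigroup} with only a short amount of boundary bookkeeping. Recall that the kernel $\Lambda$ of \eqref{eq:defLambda} agrees with the Poisson kernel $K$ on $[0,\infty)$ and simply sends $\infty$ to $\delta_\Delta$. I would start by fixing $f\in C_0(\mathbb{R}_+\times\mathbb{Z}_+)$ and extending it to $E^\Delta$ by $f(\Delta):=0$; then $P_tf(\Delta)=f(\Delta)=0$ since $\Delta$ is absorbing for $\mathbf{X}=(X^{\lambda},L^{\lambda})$ (Proposition~\ref{Fellerprop}), and $Q_t g(\infty)=g(\infty)$ for every bounded measurable $g$ since $\infty$ is absorbing for the CSBP$(\psi)$.

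Next I would record two elementary facts about $Kf(x)=\sum_{\ell\ge 0}K(x,\ell)f(x,\ell)$. It is continuous on $[0,\infty)$: as $f$ is bounded and $\sum_{\ell}K(x,\ell)=1$, the series converges locally uniformly. It vanishes at infinity: given $\varepsilon>0$, pick $R$ with $|f(x,\ell)|<\varepsilon$ whenever $x+\ell>R$; then for $x>R$ every term has $x+\ell>R$, so $|Kf(x)|\le\sum_{\ell}K(x,\ell)|f(x,\ell)|<\varepsilon$. Hence $Kf\in C_0(\mathbb{R}_+)$, and comparison with \eqref{eq:defLambda} shows that the function $\Lambda f$ on $\bar{\mathbb{R}}_+$ equals $Kf$ on $[0,\infty)$ with $\Lambda f(\infty)=f(\Delta)=0$; in other words $\Lambda f$ is exactly the canonical extension of $Kf$ to $[0,\infty]$ that is implicit in Lemma~\ref{lem:intertwinedsemigroup}.

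Then I would split into two cases. For $x\in[0,\infty)$, the measure $\Lambda(x,\cdot)$ carries no mass at $\Delta$, so $\Lambda P_tf(x)=\sum_{\ell\ge 0}K(x,\ell)P_tf(x,\ell)=K(P_tf)(x)$, which by Lemma~\ref{lem:intertwinedsemigroup} equals $Q_t(Kf)(x)$; since the only way $Q_t(x,\cdot)$ can differ from a measure on $[0,\infty)$ is through an atom at $\infty$ (possible explosion) and $\Lambda f(\infty)=0$, this value is unchanged if $\Lambda f$ is integrated instead of $Kf$, that is, it equals $Q_t\Lambda f(x)$. For $x=\infty$, absorption at both endpoints gives $\Lambda P_tf(\infty)=(P_tf)(\Delta)=f(\Delta)=0=\Lambda f(\infty)=Q_t\Lambda f(\infty)$. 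Combining the two cases yields \eqref{intertwinedsemigroupfqr-Lambda}.

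I do not expect a genuine obstacle: all the analytic substance sits in Lemma~\ref{lem:intertwinedsemigroup}, and the only thing requiring care is the compatibility at the boundary — identifying the restriction of $\Lambda f$ to $[0,\infty)$ with $Kf$, checking that $Kf$ vanishes at infinity so that its value there agrees with $\Lambda f(\infty)=f(\Delta)=0$, and invoking the absorption of $\Delta$ under $P_t$ and of $\infty$ under $Q_t$ to match the two sides at the added point. This is precisely the sense in which the extension from $K$ to $\Lambda$ is ``immediate''.
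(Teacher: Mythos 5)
Your proof is correct and takes essentially the same route as the paper: split into the cases $x<\infty$ and $x=\infty$, reduce the first case to Lemma~\ref{lem:intertwinedsemigroup} (noting that $\Lambda f$ restricted to $[0,\infty)$ is $Kf$, and that $\Lambda f(\infty)=f(\Delta)=0$ kills any mass $Q_t(x,\cdot)$ places at $\infty$), and handle $x=\infty$ by absorption of $\Delta$ under $P_t$ and of $\infty$ under $Q_t$. The explicit verification that $Kf\in C_0(\mathbb{R}_+)$ is a welcome extra check that the paper's own one-line computation leaves tacit.
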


\begin{proof}
From the definition of $\Lambda$ in term of $K$ in \eqref{eq:defLambda}, we get:
$$
\Lambda P_t f(x) = 
\sum_{\ell \in \mathbb Z_{+}} \Lambda(x,(x,\ell)) P_t f(x,\ell)  +  \Lambda(x,\Delta) P_t f(\Delta)
=\ind_{[0,\infty)}(x)K P_t f(x)  +  \ind_{\{\infty\}}(x) f(\Delta)
$$
while 
$$Q_t \Lambda f(x) = Q_t\big(
\sum_{\ell \in \mathbb Z_{+}} \Lambda(x,(x,\ell)) f(x,\ell) + 
 \Lambda(x,\Delta) f(\Delta)\big)= \ind_{[0,\infty)}(x) Q_t Kf(x) +  \ind_{\{\infty\}}(x) f(\Delta)
$$
and \eqref{intertwinedsemigroupfqr-Lambda} now follows from \eqref{intertwinedsemigroupfqr}.
\end{proof}

\begin{proof}[Proof of Theorem \ref{thm:main}] The first statement is given by Lemma \ref{lem:intertwinedsemigroup}. The two other assertions follow by applying Pitman-Rogers theorem, see Theorem \ref{thmRG}.  Precisely, we set 
$S=(\mathbb{R}_+\times \mathbb{Z}_+)\cup \{\Delta\}$
and
$S'=\mathbb{R}_+\cup\{\infty\}= \bar{\mathbb{R}}_+$, then the map $\phi: S \to S'$  and the kernel $\Lambda$ from $S'$ to $S$ are defined by:
$$\phi(x,\ell):=x , \qquad  \Lambda\big(x,(x,\ell)\big):=K(x,\ell)
\ind_{[0,\infty)}(x)+\delta_{\Delta}\big(x,\ell)\ind_{\{\infty\}}(x).$$
Last $P_t$ and $Q_t$ are the Markov semigroups  defined  on $S$ and $S'$ before Lemma \ref{lem:intertwinedsemigroup}, and $\Phi$ be defined as the operator that acts on bounded measurable functions on $S'$ by right composition of $\phi$, namely
$\Phi(f)= f \circ \phi$.
With these notations, $\Lambda \Phi$ is indeed the identity kernel on $S'$, since $\Lambda(x, \phi^{-1}(x))=1$.
Second, Lemma \ref{lem:intertwinedsemigroup} together with Remark \ref{rem:continuousbounded} ensure that 
$\Lambda P_t = Q_t \Lambda$ is satisfied for each $t \geq 0$, which in turn, composing on the right by $\Phi$ ensures that $Q_t$ is indeed defined from $P_t$ by 
$Q_t =K P_t \Phi$ for each $t \geq 0$. The assumptions of Pitman-Rogers criterion, Theorem \ref{thmRG}, are thus met. The first statement of Theorem \ref{thm:main} follows from  Theorem \ref{thmRG}-(1). Its second statement follows by Theorem \ref{thmRG}-(2), see
\eqref{eq:cond-law-strong}, with $A=\{\ell\}\times \bar{\mathbb{R}}_+$.
\end{proof}

We now study the phenomenon of explosion. 

\subsection{Proof of Proposition \ref{prop:explosion}: explosion along skeletons}\label{sec:proofprop2} Recall that our aim is to establish that when the CSBP $X^{\lambda}$ explodes continuously (i.e. not by a single jump to $\infty$), it does simultaneously as any of its skeleton.
We work under the assumption $\psi(0)=-\kappa=0$, so that no killing is allowed.  Recall that the cemetery point of the process $(X^{\lambda},L^{\lambda})$ is $\Delta=\{(x,\ell): x+\ell=\infty\}$. Since $f_{q,r}(x,\ell)=e^{-qx}r^{\ell}\underset{(q,r)\rightarrow (0,1)}{\longrightarrow} \ind_{\mathbb{R}_+\times \mathbb{Z}_+}(x,\ell)$ and $K\ind_{\mathbb{R}_+\times \mathbb{Z}_+}=\ind_{\mathbb{R}_+}$, we see from \eqref{intertwinedsemigroupfqr} that 
\begin{equation}\label{eq:KPt1}
K(P_t\ind_{\mathbb{R}_+\times \mathbb{Z}_+})(x)=Q_t(\ind_{\mathbb{R}_+})(x).  
\end{equation}
At a probabilistic level, set $\mathbb{P}:=\mathbb{P}_{x,\mathrm{Poi}(\lambda x)}$, define $\zeta:=\inf\{t>0: (X^{\lambda}_{t-},L^{\lambda}_{t-})=\Delta\}$ and $\zeta_c:=\inf\{t>0:X_{t-}^{\lambda}=\infty\}$. By definition, one has $\zeta\leq \zeta_c$ a.s. and \eqref{eq:KPt1} entails the equality
\[\mathbb{P}(\zeta>t)=\mathbb{P}(\zeta_c>t), \ \forall t>0,\]
which in turn ensures that $\mathbb{P}(\zeta=\zeta_c)=1$. We now show that
$\zeta=\zeta_d:=\inf\{t>0:L_{t-}^{\lambda}=\infty\}$.
Denote by $\zeta_n^{+}:=\inf\{t>0:X_t^{\lambda}>n\}$, the first passage time above $n$. Notice that for all $n\geq 1$, $\zeta_n^{+}<\zeta_c=\underset{n\rightarrow \infty}{\lim} \! \uparrow \zeta_n^{+}$.
By Theorem \ref{thm:main}, conditionally on $\zeta_n^{+}<\infty$ and $X_{\zeta_n^{+}}^{\lambda}$, $L_{\zeta_n^{+}}^{\lambda}$ has law $\mathrm{Poi}(\lambda X_{\zeta_n^{+}}^{\lambda})$. Thus, for any $\ell\geq 0$,
\[\mathbb{P}\big(L^{\lambda}_{\zeta_n^{+}}\leq \ell|X^{\lambda}_{\zeta_n^{+}}\big)=e^{-\lambda X^{\lambda}_{\zeta_n^{+}}}\sum_{k=0}^{\ell}\frac{(\lambda X^{\lambda}_{\zeta_n^{+}})^{k}}{k!}\leq 2^{\ell}e^{-\frac{\lambda}{2} X^{\lambda}_{\zeta_n^{+}}}\leq 2^{\ell}e^{-n\lambda/2} \text{ a.s. on }\{\zeta_{c}<\infty\}.\]
By taking expectation and then letting $n$ go to $\infty$, we see that for all $\ell\in \mathbb{Z}_+$
$$\mathbb{P}(L_{\zeta_c-}^{\lambda}\leq \ell,\zeta_c<\infty)=0.$$
We conclude that $\mathbb{P}$-almost surely
$\zeta_d\leq \zeta_c$. Therefore, $\mathbb{P}(\zeta=\zeta_d)=1$ and the proof is achieved. \qed

\section{Proofs of Proposition \ref{prop:mechanismskeleton} and Theorem \ref{thm:convergence}}\label{sec:proofsonskeletons}
We study in this section the skeletons when $\lambda\geq \rho$. Recall that in this case, they are autonomous. We start by identifying their branching mechanisms. We establish next that once rescaled by $\lambda$ they converge in the Skorohod sense towards the CSBP$(\psi)$.
\subsection{Proof of Proposition \ref{prop:mechanismskeleton}: skeletons offspring distributions}\label{sec:proofprop:mechanismsskeleton}
First, by Corollary \ref{cor:identifyjointbranchingmech}, we see, by letting $q$ go to $0$, in \eqref{Psid-(i)2}, that when $\lambda\geq \rho$, $\Psi_d(0,r)=\frac{\psi(\lambda(1-r))+\kappa}{\lambda}$. We now look for the offspring distribution. The form of $\Psi_d$ in \eqref{Psid-(i)} entails
\begin{align*}\Psi_d(0,r)&=\sum_{k\geq 1}(r^{k+1}-r)\int_{0}^{\infty}ye^{-\lambda y}\frac{(\lambda y)^k}{(k+1)!}\nu(\ddr y)+\frac{\sigma^2}{2}\lambda(r^{2}-r)+\frac{\psi(\lambda)}{\lambda}(1-r).
\end{align*}
It remains to compute the total rate of branching. Namely
\[B:=\sum_{k\geq 1}\int_{0}^{\infty}ye^{-\lambda y}\frac{(\lambda y)^k}{(k+1)!}\nu(\ddr y)+\frac{\sigma^2}{2}\lambda+\frac{\psi(\lambda)}{\lambda}=:B_1+\frac{\sigma^2}{2}\lambda+\frac{\psi(\lambda)}{\lambda}.\]
Plainly
\begin{align}
B_1&=\frac{1}{\lambda}\int_{0}^{\infty} e^{-\lambda y}\sum_{k=1}^{\infty}\frac{(\lambda y)^{k+1}}{(k+1)!}\nu(\ddr y)=\frac{1}{\lambda} \int_{0}^{\infty}\left(1-e^{-\lambda y}-\lambda y e^{-\lambda y}\right) \nu(\ddr y) \nonumber \\
&=\frac{1}{\lambda}\left(\int_0^1(1-e^{-\lambda y}-\lambda y)\nu(\ddr y)+\int_{0}^{1}\lambda y(1-e^{-\lambda y})\nu(\ddr y)\right. \nonumber \\
&\qquad \qquad \qquad \left.+\int_{1}^{\infty}(1-e^{-\lambda y})\nu(\ddr y)-\int_1^{\infty}\lambda ye^{-\lambda y}\nu(\ddr y)\right). \label{decompoB1}
\end{align}
One has
\begin{equation}\label{psilsurl}\frac{\psi(\lambda)}{\lambda}=\frac{\sigma^2}{2}\lambda-\gamma+\frac{1}{\lambda}\left(\int_{0}^{1}\big(e^{-\lambda y}-1+\lambda y\big)\nu(\ddr y)+\int_{1}^{\infty}\big(e^{-\lambda y}-1\big)\nu(\ddr y)\right).
\end{equation}
Thus, adding \eqref{decompoB1} and \eqref{psilsurl} and canceling the common terms, we obtain
\begin{align*}
B=B_1+\frac{\sigma^2}{2}\lambda+\frac{\psi(\lambda)}{\lambda}&=\sigma^2 \lambda-\gamma+\int_{0}^{1}y(1-e^{-\lambda y})\nu(\ddr y)-\int_{1}^{\infty}ye^{-\lambda y}\nu(\ddr y)=\psi'(\lambda).
\end{align*}
\qed

We now establish Theorem \ref{thm:convergence}. 
\subsection{Proof of Theorem \ref{thm:convergence}: scaling limits of skeletons}
Denote by $[0,\infty]$, the extended half-line endowed with the metric $d(x,y):=|e^{-x}-e^{-y}|$, with convention $e^{-\infty}=0$. Call $\mathbb{D}$ the Skorohod space of $[0,\infty]$-valued càdlàg paths.   Denote by $\mathbf{P}_x$ the probability law on $\mathbb{D}$ associated to the skeleton process $(L_t^{\lambda},t\geq 0)$ issued from a Poisson number of individuals:  $L_0^{\lambda}\overset{\text{law}}{=}\mathrm{Poi}(\lambda x)$.
\begin{lem}\label{lem:convonexpo} For all $q\in (0,\infty)$, 
\[\mathbf{E}_x\left[e^{-qL_t^{\lambda}/\lambda}\right]\underset{\lambda \rightarrow \infty}{\longrightarrow} e^{-xu_t(q)}, \text{ uniformly in } x\in [0,\infty).\]
\end{lem}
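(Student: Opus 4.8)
\emph{Plan.} The cleanest route is to avoid working directly with the generating function of $\varphi_\lambda$ and instead feed the two conclusions of Theorem~\ref{thm:main} into the Laplace transform. Because $\lambda\geq\rho$, the skeleton is autonomous, so $\mathbf{P}_x$ coincides with the law of the second coordinate of the two-type process $(X^\lambda,L^\lambda)$ started from $X^\lambda_0=x$ and $L^\lambda_0\overset{\text{law}}{=}\mathrm{Poi}(\lambda x)$. Fix $q>0$ and $t\geq 0$, put $c_\lambda:=\lambda(1-e^{-q/\lambda})$, and note that $0<c_\lambda<q$ with $c_\lambda\uparrow q$ as $\lambda\to\infty$.

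First I would reduce to $\{X^\lambda_t<\infty\}$. On $\{X^\lambda_t=\infty\}$ one has $L^\lambda_t=\infty$ as well — this is exactly Proposition~\ref{prop:explosion} when $\kappa=0$, and when $\kappa>0$ it is automatic since the killing of $X^\lambda$ sends the pair to the cemetery $\Delta$ — hence $e^{-qL^\lambda_t/\lambda}=0$ there (using $q>0$). On $\{X^\lambda_t<\infty\}$, Theorem~\ref{thm:main}(2) with $\tau\equiv t$ says that, conditionally on $\mathcal{F}^{X^{\lambda}}_t$, $L^\lambda_t$ is $\mathrm{Poi}(\lambda X^\lambda_t)$, so its conditional Laplace transform at $q/\lambda$ is $\exp(\lambda X^\lambda_t(e^{-q/\lambda}-1))=e^{-c_\lambda X^\lambda_t}$. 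Taking expectations and invoking Theorem~\ref{thm:main}(1) (that $X^\lambda$ is a CSBP$(\psi)$ from $x$), with the convention $e^{-c_\lambda\cdot\infty}=0$, I obtain the exact identity
\[\mathbf{E}_x\!\left[e^{-qL^\lambda_t/\lambda}\right]=\mathbf{E}_x\!\left[e^{-c_\lambda X^\lambda_t}\right]=e^{-x\,u_t(c_\lambda)}.\]

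It then remains to let $\lambda\to\infty$. Since $r\mapsto u_t(r)$ is the time-$t$ value of the flow of $\dot u=-\psi(u)$, it is continuous and nondecreasing on $[0,\infty]$, so $u_t(c_\lambda)\to u_t(q)$. If $u_t(q)<\infty$ (automatically $u_t(q)>0$ because $q>0$), then for $\lambda$ large enough $u_t(c_\lambda)\geq\delta:=u_t(q)/2>0$, and
\[0\leq e^{-x\,u_t(c_\lambda)}-e^{-x\,u_t(q)}\leq x\,\big(u_t(q)-u_t(c_\lambda)\big)\,e^{-x\,u_t(c_\lambda)}\leq x e^{-\delta x}\,\big(u_t(q)-u_t(c_\lambda)\big)\leq\frac{u_t(q)-u_t(c_\lambda)}{e\,\delta},\]
a bound free of $x$ that vanishes; if instead $u_t(q)=\infty$, i.e. $X_t=\infty$ a.s. under $\mathbb{P}_x$ for $x>0$, then $u_t(c)=\infty$ for every $c>0$, so both sides vanish for $x>0$ and equal $1$ at $x=0$. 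Either way the convergence is uniform in $x\in[0,\infty)$.

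I expect the only genuinely delicate point to be this boundary bookkeeping — ensuring the $\{X^\lambda_t=\infty\}$ part contributes nothing (precisely where Proposition~\ref{prop:explosion} enters, and what lets the lemma hold for \emph{all} branching mechanisms, explosive or immortal ones included), together with the uniformity when $u_t(q)=\infty$. There is also a purely analytic variant when $\kappa=0$: writing $\mathbf{E}_x[e^{-qL^\lambda_t/\lambda}]=e^{-x v^\lambda_t(q)}$ with $v^\lambda_t(q):=\lambda(1-f^\lambda_t(e^{-q/\lambda}))$, where $f^\lambda_t$ is the generating flow of $\varphi_\lambda$, the identity $\lambda\varphi_\lambda(1-v/\lambda)=\psi(v)$ shows that $v^\lambda_t(\cdot)$ solves the same ODE $\dot v=-\psi(v)$ as $u_t$ with initial datum $\lambda(1-e^{-q/\lambda})\to q$, and continuous dependence of the flow on the initial condition gives the conclusion; but I would favour the Theorem~\ref{thm:main} route, as it handles $\kappa>0$ and explosion without extra work.
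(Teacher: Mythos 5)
Your proof is correct and takes a genuinely different route from the paper's, though the two meet at the same exact identity
\[\mathbf{E}_x\!\left[e^{-qL^\lambda_t/\lambda}\right]=e^{-x\,u_t\left(\lambda(1-e^{-q/\lambda})\right)}.\]
The paper obtains this by working with the generating function flow $f^\lambda_t$ of the autonomous skeleton: it sets $v^\lambda_t(q):=\lambda(1-f^\lambda_t(e^{-q/\lambda}))$, notes that $\lambda\,\varphi_\lambda\!\left(1-\tfrac{v}{\lambda}\right)=\psi(v)$ (here $\kappa=0$ is implicitly used; otherwise a $+\kappa$ would appear and the comparison with $\dot u=-\psi(u)$ breaks), and invokes uniqueness of the flow $\dot u=-\psi(u)$ to conclude $v^\lambda_t(q)=u_t(\lambda(1-e^{-q/\lambda}))$. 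You instead plug the two conclusions of Theorem~\ref{thm:main} (the Poissonian conditional law of $L^\lambda_t$ given $\mathcal F^{X^\lambda}_t$ and the fact that $X^\lambda$ is a CSBP$(\psi)$) into the Laplace transform, keeping track of the cemetery via Proposition~\ref{prop:explosion}; this probabilistic route is slightly more robust, in particular handling $\kappa>0$ at no extra cost (the paper's autonomy claim and ODE identity are cleanest when $\kappa=0$). Both proofs then conclude from $\lambda(1-e^{-q/\lambda})\uparrow q$ by continuity and monotonicity of $q\mapsto u_t(q)$; your uniform estimate is actually more carefully written than the paper's: their inequality $|e^{-z}-1|\leq|z|$ is applied with $z=x\big(u_t(c_\lambda)-u_t(q)\big)\leq 0$, where the inequality runs the wrong way, whereas you correctly pick up the factor $e^{-xu_t(c_\lambda)}$ and bound $u_t(c_\lambda)\geq\delta>0$ for large $\lambda$, yielding the clean $x$-uniform bound $\frac{u_t(q)-u_t(c_\lambda)}{e\delta}$. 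Two small remarks: your fallback case $u_t(q)=\infty$ never actually occurs for a branching mechanism (convexity of $\psi$ gives an affine lower bound, so the flow $\dot u=-\psi(u)$ cannot reach $\infty$ in finite time from a finite initial condition), though your treatment of it via the probabilistic characterization is correct anyway; and the identification of $\mathbf P_x$ with the marginal law of the second coordinate of the joint process does use the simultaneous-explosion coupling, which is exactly what Proposition~\ref{prop:explosion} supplies, so no circularity.
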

\begin{proof}
Recall $\varphi_\lambda$ in \eqref{varphi}. Define
$f^{\lambda}_t(q):=\mathbb{E}\left(e^{-qL_t^{\lambda}/\lambda}|L_0^{\lambda}=1\right)$. One has 
\begin{align}
\mathbf{E}_x\left[e^{-qL_t^{\lambda}/\lambda}\right]&= \sum_{k=0}^{\infty}\frac{(\lambda x)^k}{k!}e^{-\lambda x}\mathbb{E}\left[e^{-qL_t^{\lambda}/\lambda}|L_0^{\lambda}=k\right]\nonumber\\
&= \sum_{k=0}^{\infty}\frac{\big(\lambda xf^{\lambda}_t(e^{-q/\lambda})\big)^k}{k!}e^{-\lambda x}\nonumber\\
&=e^{-\lambda x\big(1-f^{\lambda}_t(e^{-q/\lambda})\big)}.\label{eq:cumulantskeleton}
\end{align}
The map $t\mapsto v_t^{\lambda}(q):=\lambda\big(1-f^{\lambda}_t(e^{-q/\lambda})\big)$ satisfies the o.d.e
\begin{align*}
&\frac{\ddr }{\ddr t}v_t^{\lambda}(q)=-\lambda \varphi_{\lambda}\big(f_t^{\lambda}(e^{-q/\lambda})\big)=-\psi\big(v_t^{\lambda}(q)\big)\\
&v^{\lambda}_0(q)=\lambda\big(1-e^{-q/\lambda}\big).
\end{align*}
Therefore, by uniqueness,  \[v_t^\lambda(q)=u_t\big(\lambda(1-e^{-q/\lambda})\big), \text{ for all } t\geq 0.\]
By continuity of $\eta\mapsto u_t(\eta)$, we see that $v_t^{\lambda}(q)\underset{\lambda \rightarrow \infty}{\longrightarrow} u_t(q)$. Thus, for any $x\geq 0$,
\[\mathbf{E}_x\left[e^{-qL_t^{\lambda}/\lambda}\right]\underset{\lambda \rightarrow \infty}{\longrightarrow} e^{-xu_t(q)}.\]
It remains to establish the uniform convergence, and, as we shall see, its order can be determined as well:
\begin{align}
\sup_{x\geq 0}\left\lvert\mathbf{E}_x\left[e^{-qL_t^{\lambda}/\lambda}\right]-e^{-xu_t(q)}\right\lvert&=\sup_{x\geq 0}\left\lvert e^{-xu_t\left(\lambda(1-e^{-q/\lambda})\right)}-e^{-xu_t(q)}\right\lvert\nonumber\\
&=\sup_{x\geq 0}e^{-xu_t(q)}\left\lvert e^{-x\big(u_t\left(\lambda(1-e^{-q/\lambda})\right)-u_t(q)\big)}-1\right\lvert\nonumber\\
&\leq \sup_{x\geq 0}xe^{-xu_t(q)}\left\lvert u_t\left(\lambda(1-e^{-q/\lambda})\right)-u_t(q)\right\lvert\nonumber\\
&= \frac{1}{u_t(q)}\left\lvert u_t\left(\lambda(1-e^{-q/\lambda})\right)-u_t(q)\right\lvert \nonumber\\
&\leq \frac{1}{u_t(q)}\frac{\partial}{\partial q}u_t(q)\left(q-\lambda\left(1-e^{-q/\lambda}\right)\right)=o(1/\lambda),\nonumber
\label{uniformconv}
\end{align}
where $o(1/\lambda)$ is a positive function $g$ such that $\underset{\lambda \rightarrow \infty}{\lim} \lambda g(\lambda)=0$.
\end{proof}
Denote by $\mathbf{Q}_t^{1/\lambda}$ the semigroup of $(L_t^{\lambda}/\lambda,t\geq 0)$ under the probability laws $(\mathbf{P}_x,x\in [0,\infty))$. Recall $Q_t$ the semigroup of the CSBP$(\psi)$. We denote below the supremum norm by $\|\cdot\|_{\infty}$.
\begin{lem}\label{lem:unifconvsemigroup} For any $f\in C_0(\bar{\mathbb{R}}_+)$, 
\[\|\mathbf{Q}_t^{1/\lambda}f-Q_tf\|_{\infty}\underset{\lambda \rightarrow \infty}{\longrightarrow} 0.\]
\end{lem}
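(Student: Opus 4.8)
The plan is to deduce the statement from the uniform convergence on exponential functions already established in Lemma~\ref{lem:convonexpo}, by combining it with a Stone--Weierstrass density argument and a routine $3\varepsilon$ estimate. First I would note two elementary facts. Writing $e_q(x):=e^{-qx}$ for $q\in[0,\infty)$, with $e_q(\infty):=\lim_{x\to\infty}e^{-qx}$ (so $e_0\equiv 1$), one has $\mathbf{Q}_t^{1/\lambda}e_q(x)=\mathbf{E}_x[e^{-qL_t^{\lambda}/\lambda}]$ and $Q_te_q(x)=e^{-xu_t(q)}$ for every $x\in\bar{\mathbb{R}}_+$, the two identities agreeing at $x=\infty$ (both values equal $0$ when $q>0$ and $1$ when $q=0$, the state $\infty$ being absorbing for each of the two processes). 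Hence Lemma~\ref{lem:convonexpo} reads exactly $\|\mathbf{Q}_t^{1/\lambda}e_q-Q_te_q\|_\infty\to 0$ as $\lambda\to\infty$ for each fixed $q\in(0,\infty)$, and for $q=0$ the difference vanishes identically. Moreover $\mathbf{Q}_t^{1/\lambda}$ and $Q_t$ are contractions for $\|\cdot\|_\infty$, being given by the expectations $\mathbf{E}_x[\,\cdot\,]$ and $\mathbb{E}_x[\,\cdot\,]$.

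Next I would invoke the Stone--Weierstrass theorem. The space $\bar{\mathbb{R}}_+=[0,\infty]$ being compact, $C_0(\bar{\mathbb{R}}_+)$ is the space of all continuous functions on it, and the linear span $A:=\mathrm{Vect}\{e_q:q\in[0,\infty)\}$ is a subalgebra of $C_0(\bar{\mathbb{R}}_+)$ that contains the constants and separates the points of $\bar{\mathbb{R}}_+$; it is therefore dense for $\|\cdot\|_\infty$. Including $e_0$ in the span is what makes the algebra unital, since every $e_q$ with $q>0$ vanishes at $\infty$. By linearity, the first paragraph then gives $\|\mathbf{Q}_t^{1/\lambda}g-Q_tg\|_\infty\to 0$ as $\lambda\to\infty$ for every $g\in A$.

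Finally, fix $f\in C_0(\bar{\mathbb{R}}_+)$ and $\varepsilon>0$, and choose $g\in A$ with $\|f-g\|_\infty<\varepsilon$. Using the contraction property for the first and last terms,
\begin{align*}
\|\mathbf{Q}_t^{1/\lambda}f-Q_tf\|_\infty
&\leq \|\mathbf{Q}_t^{1/\lambda}(f-g)\|_\infty+\|\mathbf{Q}_t^{1/\lambda}g-Q_tg\|_\infty+\|Q_t(g-f)\|_\infty\\
&\leq 2\varepsilon+\|\mathbf{Q}_t^{1/\lambda}g-Q_tg\|_\infty,
\end{align*}
so that $\limsup_{\lambda\to\infty}\|\mathbf{Q}_t^{1/\lambda}f-Q_tf\|_\infty\leq 2\varepsilon$; since $\varepsilon$ is arbitrary the lemma follows. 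I do not expect a serious obstacle here: all the analytic content sits in Lemma~\ref{lem:convonexpo} (the uniform-in-$x$ control of $\mathbf{E}_x[e^{-qL_t^{\lambda}/\lambda}]$, which rests on the explicit cumulant formula \eqref{eq:cumulantskeleton} and the continuity of $\eta\mapsto u_t(\eta)$). The only points needing a little care are that we work on the \emph{compactified} half-line — hence the need to adjoin the constant to the approximating algebra and to match the behaviour at $x=\infty$ — and the remark that this semigroup convergence, together with the weak convergence of the rescaled initial laws $\mathrm{Poi}(\lambda x)/\lambda$ towards $\delta_x$ and standard criteria for the convergence of Feller processes, is precisely what will feed the proof of Theorem~\ref{thm:convergence}.
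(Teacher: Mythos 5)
Your proof is correct and follows essentially the same route as the paper's: reduce to the linear span of exponentials via density, use the $L^\infty$-contraction property of the two Markov semigroups, and close with a $3\varepsilon$ estimate, all the analytic content being in Lemma~\ref{lem:convonexpo}. The one small difference is cosmetic: you adjoin the constant $e_0$ to the approximating algebra so that Stone--Weierstrass gives density in all of $C([0,\infty])$, whereas the paper works with $D_c=\mathrm{Vect}\{e_q,q>0\}$, whose uniform closure is only the subspace of functions vanishing at $\infty$; this is enough because the paper's convention is that $C_0$ denotes functions vanishing at infinity, so $C_0(\bar{\mathbb{R}}_+)$ is that subspace and the constants are not needed. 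Your version is slightly stronger (it also covers the constants, i.e.\ conservativity), but neither step changes the argument.
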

\begin{proof}
Let $f\in C_0(\bar{\mathbb{R}}_+)$ and $\epsilon>0$. By density, there exists $g$ in $D_c$, the linear span of $\{e_q(\cdot),q>0\}$, such that $\|f-g\|_\infty\leq \epsilon/4$. Recall that $Q_te_q(x)=e^{-xu_t(q)}$. By Lemma \ref{lem:convonexpo}, one can choose $\lambda$ large enough so that
$\|\mathbf{Q}_t^{1/\lambda}g-Q_tg\|_{\infty}\leq \epsilon/2$ and one has
\begin{align*}
\|\mathbf{Q}_t^{1/\lambda}f-Q_tf\|_{\infty}&=\|\mathbf{Q}_t^{1/\lambda}f-\mathbf{Q}_t^{1/\lambda}g+\mathbf{Q}_t^{1/\lambda}g+Q_tg-Q_tf-Q_tg\|_{\infty}\\
&\leq 2\|f-g\|_\infty+\|\mathbf{Q}_t^{1/\lambda}g-Q_tg\|_{\infty}\leq \epsilon.
\end{align*}
\end{proof}
\begin{proof}[Proof of Theorem \ref{thm:convergence}] This is obtained by combining Lemma \ref{lem:unifconvsemigroup} and \cite[Theorem 2.5 p.167]{EK}, which states that uniform convergence of a family of Feller semigroups, together with the convergence of the initial laws, entails the convergence in Skorohod's sense of the associated processes.\end{proof}
\section*{Appendix}

Let $(S, \mathcal S)$, $(S', \mathcal S')$ be two measurable spaces, with their respective set of bounded and measurable functions denoted by $b\mathcal S$ and $b \mathcal S'$. Let $(P_t)_{t \geq 0}$ be a Markov semigroup on $(S, \mathcal S)$, and $\phi : S \to S'$ be measurable. The action of $\phi$ by right-composition defines an operator $\Phi : b\mathcal S' \to b\mathcal S, f \mapsto \Phi(f):=f\circ \phi$. Denote by $\mathbf{X}$ the Markov process with semigroup $(P_t)$ and denote its image by the mapping $\phi$,  $\phi(\mathbf{X}):=(\phi(\mathbf{X}_t),t\geq 0)$. Call $\mathcal F^{\phi(\mathbf{X}),0}_t=\sigma\big(\phi(\mathbf{X}_s), 0 \leq s \leq t\big)$ the natural filtration generated by $\phi(\mathbf{X})$. Let $\mathcal F^{\phi(\mathbf{X})}_t$ be its usual augmentation. 

\begin{theostar}[Pitman-Rogers criterion, Theorem 2 in \cite{RP81}]
\label{thmRG}
Suppose there is a Markov kernel $\Lambda$ from $(S', \mathcal{S}')$ to $(S,\mathcal{S})$ such that
\begin{itemize}
\item $\Lambda \Phi =I$, the identity kernel on $S'$.
\item  for each $t \geq 0$, the Markov kernel $Q_t :=\Lambda P_t \Phi$ from $S'$ to $S'$
satisfies the identity 
\begin{equation}
\label{inter-append}
\Lambda P_t = Q_t \Lambda.
\end{equation}

\end{itemize} 
\begin{enumerate}
    \item If $\mathbf{X}$ has initial distribution $\Lambda(y, \cdot)$, with $y \in S'$, then for each $t \geq 0, A \in \mathcal{S}$, 
\begin{equation}
\label{eq:cond-law}
    \P(\mathbf{X}_t \in A | \mathcal F^{\phi(\mathbf{X}),0}_t) = \Lambda\big(\phi(\mathbf{X}_t), A\big) \qquad \text{a.s.,}
\end{equation}
holds, and $\phi(\mathbf{X})$ is Markov with starting state $y$ and transition semigroup $(Q_t)$. Additionally, \eqref{eq:cond-law} again holds with $\mathcal F^{\phi(\mathbf{X}),0}_t$ replaced by $\mathcal F^{\phi(\mathbf{X})}_t$.
\item If $\mathbf{X}$ has a.s. càdlàg sample paths, $\phi(\mathbf{X})$ is Feller and $x \mapsto \Lambda(x,\cdot)$ is weakly continuous, then for each $\mathcal F^{\phi(\mathbf{X})}_t$ stopping time $T$, it holds:
\begin{equation}
\label{eq:cond-law-strong}
    \P(\mathbf{X}_T \in A | \mathcal F^{\phi(\mathbf{X})}_T) = \Lambda\big(\phi(\mathbf{X}_T), A\big) \qquad \text{a.s. on the event } \{T<\infty\}.
\end{equation}

\end{enumerate}
\end{theostar}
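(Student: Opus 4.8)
The plan is to carry out the classical Rogers--Pitman argument; throughout, $\mathbf X$ is started from $\Lambda(y,\cdot)$. The core is a finite--dimensional statement, proved by induction on $n$: for all $0=t_0\le\cdots\le t_n$ and all bounded measurable $h:S\to\R$,
\[
\E\big[h(\mathbf X_{t_n})\mid \phi(\mathbf X_{t_0}),\dots,\phi(\mathbf X_{t_n})\big]=(\Lambda h)\big(\phi(\mathbf X_{t_n})\big)\quad\text{a.s.},
\]
and $\big(\phi(\mathbf X_{t_k})\big)_{0\le k\le n}$ is a Markov chain started at $y$ with transition kernels $Q_{t_k-t_{k-1}}$. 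The base case $n=0$ uses only $\Lambda\Phi=I$, which forces the law of $\phi(\mathbf X_0)$ to be $\delta_y$ and makes $\sigma(\phi(\mathbf X_0))$ trivial. For the inductive step I would condition on $\mathcal F^{\mathbf X}_{t_{n-1}}$, use the Markov property of $\mathbf X$ to rewrite $\E\big[h(\mathbf X_{t_n})\,g_n(\phi(\mathbf X_{t_n}))\mid\mathcal F^{\mathbf X}_{t_{n-1}}\big]=\big(P_{t_n-t_{n-1}}[h\cdot(g_n\circ\phi)]\big)(\mathbf X_{t_{n-1}})$, apply the induction hypothesis to this function of $\mathbf X_{t_{n-1}}$ so as to replace it by $\big(\Lambda P_{t_n-t_{n-1}}[h\cdot(g_n\circ\phi)]\big)(\phi(\mathbf X_{t_{n-1}}))$, invoke the intertwining $\Lambda P_{t_n-t_{n-1}}=Q_{t_n-t_{n-1}}\Lambda$, and finally use the rigidity contained in $\Lambda\Phi=I$, namely $\Lambda(z,\phi^{-1}(B))=\mathbf 1_B(z)$ for all $B$, whence $\Lambda\big[h\cdot(g\circ\phi)\big]=g\cdot(\Lambda h)$ on $S'$. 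Taking $h\equiv1$ yields the Markov chain assertion (using also $Q_sQ_t=\Lambda P_s\Phi\Lambda P_t\Phi=\Lambda P_{s+t}\Phi=Q_{s+t}$), and a general $h$ yields the conditional--law identity.

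I would then upgrade the conditioning from finite $\sigma$--algebras to $\mathcal F^{\phi(\mathbf X),0}_t=\sigma(\phi(\mathbf X_s):s\le t)$: the events $\{(\phi(\mathbf X_{t_0}),\dots,\phi(\mathbf X_{t_n}))\in B\}$ with $\{t_0,\dots,t_n\}\subset[0,t]$ and $t_n=t$ form a $\pi$-system generating $\mathcal F^{\phi(\mathbf X),0}_t$, so $\E[h(\mathbf X_t)\mathbf 1_C]=\E[(\Lambda h)(\phi(\mathbf X_t))\mathbf 1_C]$ passes from this $\pi$-system to all of $\mathcal F^{\phi(\mathbf X),0}_t$ by the monotone class theorem, which is \eqref{eq:cond-law}; replacing $\mathcal F^{\phi(\mathbf X),0}_t$ by its usual augmentation $\mathcal F^{\phi(\mathbf X)}_t$ changes nothing, since conditional expectations ignore null sets. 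The Markovianity of $\phi(\mathbf X)$ with semigroup $(Q_t)$ and starting state $y$ is read off the finite--dimensional statement. This completes Part (1).

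For Part (2), I would approximate a bounded $\mathcal F^{\phi(\mathbf X)}_t$--stopping time $T$ from above by the dyadic stopping times $T_m:=2^{-m}\lceil 2^m T\rceil\downarrow T$. Fixing a bounded continuous $h:S\to\R$ and $C\in\mathcal F^{\phi(\mathbf X)}_T\subset\mathcal F^{\phi(\mathbf X)}_{T_m}$, I would split over the countably many values, noting $C\cap\{T_m=k2^{-m}\}\in\mathcal F^{\phi(\mathbf X)}_{k2^{-m}}$, and apply Part (1) at each deterministic time $k2^{-m}$ to obtain $\E[h(\mathbf X_{T_m})\mathbf 1_C]=\E[(\Lambda h)(\phi(\mathbf X_{T_m}))\mathbf 1_C]$. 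Letting $m\to\infty$: right--continuity of the paths of $\mathbf X$ gives $h(\mathbf X_{T_m})\to h(\mathbf X_T)$; right--continuity of $\phi(\mathbf X)$ (a consequence of the Feller hypothesis) together with the weak continuity of $x\mapsto\Lambda(x,\cdot)$ gives $(\Lambda h)(\phi(\mathbf X_{T_m}))\to(\Lambda h)(\phi(\mathbf X_T))$; bounded convergence then yields $\E[h(\mathbf X_T)\mathbf 1_C]=\E[(\Lambda h)(\phi(\mathbf X_T))\mathbf 1_C]$. A monotone class argument extends this from bounded continuous $h$ to indicators $h=\mathbf 1_A$, and truncating $T$ by $T\wedge n$ and letting $n\to\infty$ removes the boundedness of $T$ and gives the statement on $\{T<\infty\}$. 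I expect the main obstacle to be exactly this limiting step: one must check that the Feller hypothesis is what delivers right--continuity of $\phi(\mathbf X)$ (so $\phi(\mathbf X_{T_m})\to\phi(\mathbf X_T)$) and that weak continuity of $\Lambda$ is precisely what lets one push the limit through $\Lambda h$; the bookkeeping of $\mathcal F^{\phi(\mathbf X)}_{T_m}$ for discretely valued stopping times and the monotone class extensions are routine.
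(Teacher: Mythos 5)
Your proof follows essentially the same path as the paper's: for Part (1) the key identity $\Lambda\big[h\cdot(g\circ\phi)\big]=g\cdot\Lambda h$ (a consequence of $\Lambda\Phi=I$), the induction over finite-dimensional marginals using the Markov property of $\mathbf X$ together with the intertwining $\Lambda P_t = Q_t\Lambda$, and a monotone-class extension to $\mathcal F^{\phi(\mathbf X),0}_t$ and then to its augmentation; and for Part (2) the dyadic approximation $T_m\downarrow T$ by countably-valued stopping times combined with right-continuity of paths and weak continuity of $\Lambda$. The only genuine variation is in how Part (2) is closed: you fix $C\in\mathcal F^{\phi(\mathbf X)}_T$, apply the deterministic-time identity at each value $k2^{-m}$ of $T_m$, and pass to the limit in the resulting equality of integrals by bounded convergence; the paper instead invokes backward martingale convergence along the decreasing family $\bigcap_n\mathcal F^{\phi(\mathbf X)}_{T_n}$ and uses right-continuity of the usual augmented Feller filtration to identify that intersection with $\mathcal F^{\phi(\mathbf X)}_T$. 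Your variant is a bit more elementary since it sidesteps the martingale-convergence and filtration-right-continuity steps, using the Feller hypothesis only for the right-continuity of $t\mapsto\phi(\mathbf X_t)$; both arguments are correct.
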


Only the second statement in point (1) and point (2) are formally new. We give a complete proof of the theorem for the benefit of the reader. Regarding the second assumption, we stress that \textit{any} Markov kernel $(Q_t)$ satisfying the intertwining relationship \eqref{inter-append} necessarily satisfies, under the assumption $\Lambda \Phi =I$, that $Q_t = \Lambda P_t \Phi$.

\begin{proof}
We first establish (1). Let $y \in S'$. Given $f \in b\mathcal S'$ and $g \in b\mathcal S$, and using that $f(\phi(x)) = f(y)$, $\Lambda(y, .)$-a.s.,  we get (6) as follows 
$$\Lambda (\Phi f) g  (y) = \int_{S} \Lambda(y,\ddr x)  f(\phi(x)) g(x) =   \int_{S} \Lambda(y,\ddr x)  f(y) g(x)  = (f \Lambda g) (y).$$
Now, for $y \in S'$, the quantity $\Lambda  P_t (\Phi f) g$ evaluated at $y$ has to be interpreted as 
$\Lambda  P_t (\Phi f) g  (y) = \int_{S} \Lambda(y,\ddr x)  \int_S P_t(x,\ddr x')  ( f(\phi(x')) g(x') )$  
and, using that $\Lambda  P_t= Q_t \Lambda$, we arrive at the following equality, which corresponds to Eq (7) p. 574 in \cite{RP81}:
\begin{align*}
(\Lambda  P_t (\Phi f) g)  (y) 
& = (\Lambda  P_t)  ( f(\phi)   g )(y) \\
& = (Q_t \Lambda)  ( f(\phi) g )(y) \\
& = Q_t (\Lambda ( f(\phi)  g ))(y) \\
& = Q_t (\Lambda ( (\Phi f)  g ))(y) \\
& = Q_t (f \Lambda g) (y).
\end{align*}

As for the induction step, Eq (8), setting $h(x)= P_{t_{n}}( f_{n} \circ \phi  \cdot g)(x)$, and assuming the property holds at step $n-1$, we get:
\begin{align*}
\Lambda  P_{t_1} (\Phi f_1)  P_{t_2} (\Phi f_2)  \ldots P_{t_{n}} (\Phi f_{n})  g  
& =\Lambda  P_{t_1} (\Phi f_1)   \ldots   P_{t_{n-1}} (\Phi f_{n-1}) h  \\
& = Q_{t_1} f_1  \ldots Q_{t_{n-1}}  f_{n-1}  \Lambda h \\
& = Q_{t_1} f_1  \ldots Q_{t_{n-1}}  f_{n-1}  \Lambda P_{t_{n}} ( f_{n} \circ \phi   \cdot g )\\
& = Q_{t_1} f_1  \ldots Q_{t_{n-1}}  f_{n-1}  \Lambda P_{t_{n}}(\Phi(f_{n})  g)  \\
& = Q_{t_1} f_1 \ldots Q_{t_{n-1}}  f_{n-1}  Q_{t_{n}} (f_{n}) \Lambda   g
\end{align*}

Now, writing $\mathbb{E}_{\Lambda(y,.)}$ for $\int_x \Lambda(y,\ddr x) \mathbb{E}_{x}$, we get:

\begin{align*}
\mathbb{E}_{\Lambda(y,.)} [ f_1 (\phi(\mathbf{X}_{t_1}))  f_2 (\phi(\mathbf{X}_{t_1+t_2}))    f_{n+1}(\phi(\mathbf{X}_{t_1+\ldots+t_n})) g(X_{t_1+\ldots+t_n})]  
&=  \big( \Lambda  P_{t_1} (\Phi f_1)  P_{t_2} (\Phi f_2)  P_{t_{n}} (\Phi f_{n})  g \big)  (y) \\
& =  \big( Q_{t_1} f_1  \ldots Q_{t_{n-1}}  f_{n-1}  Q_{t_{n}} f_{n} \Lambda   g \big) (y)
\end{align*}
implies, with $g=1$, that $\phi \circ X$ starting from initial measure $\Lambda(y,.)$ is Markov with semi-group $Q_{t}$, 
while the identity:
\begin{align*}
& \mathbb{E}_{\Lambda(y,.)} \Big[ f_1 (\phi(\mathbf{X}_{t_1}))  f_2 (\phi(\mathbf{X}_{t_1+t_2}))   \ldots f_{n}(\phi(\mathbf{X}_{t_1+\ldots+t_n})) g(\mathbf{X}_{t_1+\ldots+t_n})\Big] \\
& = \mathbb{E}_{\Lambda(y,.)}  \Big[f_1 (\phi(\mathbf{X}_{t_1}))  f_2 (\phi(\mathbf{X}_{t_1+t_2}))  \ldots  f_{n}(\phi(\mathbf{X}_{t_1+\ldots+t_n}))  \int \Lambda(\mathbf{X}_{t_1+\ldots+t_n}, \ddr x)   g(x)\Big]
\end{align*}
and this in turn gives by a monotone class theorem that for each bounded or positive random variable $Z$, measurable with respect to $\mathcal F^{\phi(\mathbf{X}),0}_t$, 
$$\mathbb{E}_{\Lambda(y,.)} \Big[ Z \, g(\mathbf{X}_{t})\Big] \\
= \mathbb{E}_{\Lambda(y,.)}  \Big[ Z  \,  \int \Lambda(\mathbf{X}_{t}, \ddr x)   g(x)\Big].
$$
This entails that the conditional distribution of $\mathbf{X}_{t}$ given $\mathcal F^{\phi(\mathbf{X}),0}_t$ is $\Lambda(\mathbf{X}_t, \ddr x)$. Let now $Z$ be a positive  $\mathcal F^{\phi(\mathbf{X})}_t$-measurable random variable. By definition of the augmented filtration, there are two $\mathcal F^{\phi(X),0}_t$-measurable random variables $Z',Z''$ such that $Z'\leq Z\leq Z''$ and $\mathbb{P}_{\Lambda(y,\cdot)}(Z''-Z'>0)=0$. Hence, 
$\mathbb{E}_{\Lambda(y,\cdot)}[\ind_{\{Z''-Z'>0\}}g(\mathbf{X}_t)]=0$. With no loss of generality, assuming $g$ non-negative, one has
\[\mathbb{E}_{\Lambda(y,\cdot)}[Z'g(\mathbf{X}_t)]\leq \mathbb{E}_{\Lambda(y,\cdot)}[Zg(\mathbf{X}_t)]\leq \mathbb{E}_{\Lambda(y,\cdot)}[Z''g(\mathbf{X}_t)].\]
The lower and upper bounds are  $\mathbb{P}_{\Lambda(y,\cdot)}$-a.s. equal to $\int \Lambda(X_{t}, \ddr x)   g(x)$, which ends the proof of (1).

For statement (2). We start by considering a stopping time $T$ taking values in a countable set $D$. One has plainly,
\begin{align*}\ind_{\{T<\infty\}}\mathbb{E}_{\Lambda(y,\cdot)}[g(\mathbf{X}_T)|\mathcal{F}^{\Phi(\mathbf{X})}_T]&=\ind_{\{T<\infty\}} \sum_{d\in D}\ind_{\{T=d\}}\mathbb{E}_{\Lambda(y,\cdot)}[g(\mathbf{X}_d)|\mathcal{F}^{\Phi(\mathbf{X})}_d]\\
&=\ind_{\{T<\infty\}} \sum_{d\in D}\ind_{\{T=d\}}\int \Lambda(\mathbf{X}_d,\ddr x)g(x)\\
&=\ind_{\{T<\infty\}}\int \Lambda(\mathbf{X}_T,\ddr x)g(x).
\end{align*}
Setting $T_n:=([2^{n}T]+1)/2^n$ defines a sequence of stopping times taking their values in a countable set and decreasing to $T$. By assumption, $\mathbf{X}$ has càdlàg sample paths, hence $\mathbf{X}_{T_n} \underset{n\rightarrow \infty}{\longrightarrow} \mathbf{X}_{T}$ a.s. on $\{T<\infty\}$. Assume $g$ continuous bounded. Then by combining \cite[Corollary (2.4), Chap. II]{zbMATH02150787} and  the weak continuity of the kernel $y\mapsto \Lambda(y,\cdot)$, one gets:
\[\ind_{\{T<\infty\}}\mathbb{E}_{\Lambda(y,\cdot)}\big[g(\mathbf{X}_T)|\bigcap_{n\geq 1}\mathcal{F}^{\Phi(\mathbf{X})}_{T_n}\big]=\underset{n\rightarrow \infty}{\lim}\ind_{\{T_n<\infty\}}\int \Lambda(\mathbf{X}_{T_n},\ddr x)g(x)=\ind_{\{T<\infty\}}\int \Lambda(\mathbf{X}_T,\ddr x)g(x). \]
Since by assumption $\phi(\mathbf{X})$ is Feller, the usual augmented filtration is right-continuous, see \cite[Proposition 2.10, p. 93]{zbMATH02150787}, and  \[\bigcap_{n\geq 1}\mathcal{F}^{\Phi(\mathbf{X})}_{T_n}=\mathcal{F}^{\Phi(\mathbf{X})}_{T+}=\mathcal{F}^{\Phi(\mathbf{X})}_{T},\]
see e.g. \cite[Exercice 4.17, Chap.I]{zbMATH02150787} and finally \eqref{eq:cond-law-strong}  holds by a standard argument, see e.g. Billingsley's book \cite[Theorem 1.2]{zbMATH01354815}.

\end{proof}

\begin{rem}[Restriction to compactly supported continuous functions]
\label{rem:continuousbounded}
The identity \eqref{inter-append} is an identity between Markov kernels on $\mathcal S'$, or equivalently between operators on the class $b \mathcal S'$ of bounded measurable functions on $S'$. Under the assumption that $S'$ is locally compact and separable, 
it is enough to check that the two operators coincide on 
the class of compactly supported continuous functions
by the uniqueness part of Riesz theorem, see Rudin \cite[Theorem 2.14 p.40]{RUD}.
\end{rem}

\begin{rem}[Submarkovian intertwining] The setting where $(P_t)$ is a sub-Markovian semigroup only can be recasted in the Markovian setting by the adjunction of cemetery points as follows. Precisely, assume  that all the assumptions of Theorem \ref{thmRG} hold except for the fact that $(P_t)$ and consequently also $(Q_t)$ are sub-Markovian.
We extend $\phi$ by requiring $\bar{\phi}:  \bar{S} := S \cup \{ \delta\} \to \bar{S'} := S' \cup \{ \delta\} $ to satisfy $\bar{\phi}(\delta)=\delta'$, and extend $\Lambda$ to $\bar{\Lambda}: \bar{S}' \to \bar{S}$ by requiring $\bar{\Lambda}(\delta', \{\delta\})=1$. The definitions of $P_t$ and $Q_t$ are accordingly modified to accommodate functions defined on $\delta$ and $\delta'$ by setting $\bar{P_t} f(\delta) = f(\delta)$ for $f \in \mathcal{\bar{S}}$ and $\bar{Q_t} f(\delta') = f(\delta')$ for $f \in \mathcal{ \bar{S}'}$. Then one easily checks that the three identities 
$$\bar{\Lambda} \bar{\Phi} =I, \quad  \bar{Q}_t = \bar{\Lambda} \bar{P}_t \bar{\Phi}, \quad \bar{\Lambda} \bar{P}_t = \bar{Q}_t \bar{\Lambda}$$
again hold (for the obvious definition of $\bar{\Phi}$ from $\bar{\phi}$, and as consequence, 
the identity \eqref{eq:cond-law} again holds for $A \in \bar{S}$ with the extended kernel $\bar{\Lambda}$. Last, $\bar{\phi} \circ \bar{X}$ is Markov on $\bar{S'}$ with semi-group $\bar Q_t.$
\end{rem}

\noindent \textbf{Acknowledgements.}   Cl\'ement Foucart is supported  by the European Union (ERC, SINGER, 101054787). Views and opinions expressed are however those of the authors only and do not necessarily reflect those of the European Union or the European Research Council. Neither the European Union nor the granting authority can be held responsible for them.


\begin{thebibliography}{ACPM24}

\bibitem[ACPM24]{zbMATH07811897}
Marc Arnaudon, Kol{\'e}h{\`e} Coulibaly-Pasquier, and Laurent Miclo.
\newblock Couplings of {Brownian} motions with set-valued dual processes on {Riemannian} manifolds.
\newblock {\em J. {\'E}c. Polytech., Math.}, 11:473--522, 2024.

\bibitem[AD09]{zbMATH05598047}
Romain Abraham and Jean-Fran{\c{c}}ois Delmas.
\newblock Changing the branching mechanism of a continuous state branching process using immigration.
\newblock {\em Ann. Inst. Henri Poincar{\'e}, Probab. Stat.}, 45(1):226--238, 2009.

\bibitem[AD12]{zbMATH06047803}
Romain Abraham and Jean-Fran{\c{c}}ois Delmas.
\newblock A continuum-tree-valued {Markov} process.
\newblock {\em Ann. Probab.}, 40(3):1167--1211, 2012.

\bibitem[ADV10]{zbMATH05946939}
Romain Abraham, Jean-Fran{\c{c}}ois Delmas, and Guillaume Voisin.
\newblock Pruning a {L{\'e}vy} continuum random tree.
\newblock {\em Electron. J. Probab.}, 15:1429--1473, 2010.
\newblock Id/No 46.

\bibitem[AN04]{AthreyaNey}
Krishna~B. Athreya and Peter~E. Ney.
\newblock {\em Branching processes.}
\newblock Mineola, NY: Dover Publications, reprint of the 1972 original edition, 2004.

\bibitem[BFM08]{BF08}
Jean Bertoin, Joaquin Fontbona, and Servet Mart{\'\i}nez.
\newblock On prolific individuals in a supercritical continuous-state branching process.
\newblock {\em Journal of applied probability}, 45(3):714--726, 2008.

\bibitem[Bil99]{zbMATH01354815}
Patrick Billingsley.
\newblock {\em Convergence of probability measures.}
\newblock Wiley Ser. Probab. Stat. Chichester: Wiley, 2nd ed. edition, 1999.

\bibitem[BLP15]{zbMATH06433930}
M{\'a}ty{\'a}s Barczy, Zenghu Li, and Gyula Pap.
\newblock Stochastic differential equation with jumps for multi-type continuous state and continuous time branching processes with immigration.
\newblock {\em ALEA, Lat. Am. J. Probab. Math. Stat.}, 12(1):129--169, 2015.

\bibitem[BR78]{zbMATH03587373}
Garrett Birkhoff and Gian-Carlo Rota.
\newblock Ordinary differential equations. 3rd ed.
\newblock New {York} etc.: {John} {Wiley} \& {Sons}. {XI}., 1978.

\bibitem[CGB17]{zbMATH06775439}
Maria~Emilia Caballero, Jos{\'e} Luis~P{\'e}rez Garmendia, and Ger{\'o}nimo~Uribe Bravo.
\newblock Affine processes on {{\(\mathbb{R}_+^m\times\mathbb{R}^n\)}} and multiparameter time changes.
\newblock {\em Ann. Inst. Henri Poincar{\'e}, Probab. Stat.}, 53(3):1280--1304, 2017.

\bibitem[CPY98]{CPY98}
Philippe Carmona, Fr{\'e}d{\'e}rique Petit, and Marc Yor.
\newblock Beta-gamma random variables and intertwining relations between certain markov processes.
\newblock {\em Revista Matematica Iberoamericana}, 14(2):311--367, 1998.

\bibitem[DF90]{zbMATH04192817}
Persi Diaconis and James~Allen Fill.
\newblock Strong stationary times via a new form of duality.
\newblock {\em Ann. Probab.}, 18(4):1483--1522, 1990.

\bibitem[DFS03]{DFS03}
Darrell Duffie, Damir Filipovi{\'c}, and Walter Schachermayer.
\newblock Affine processes and applications in finance.
\newblock {\em The Annals of Applied Probability}, 13(3):984--1053, 2003.

\bibitem[DW07]{DW07}
Thomas Duquesne and Matthias Winkel.
\newblock Growth of l{\'e}vy trees.
\newblock {\em Probability Theory and Related Fields}, 139(3):313--371, 2007.

\bibitem[DW19]{zbMATH07107460}
Thomas Duquesne and Matthias Winkel.
\newblock Hereditary tree growth and {L{\'e}vy} forests.
\newblock {\em Stochastic Processes Appl.}, 129(10):3690--3747, 2019.

\bibitem[EK09]{EK}
Stewart~N Ethier and Thomas~G Kurtz.
\newblock {\em Markov processes: characterization and convergence}.
\newblock John Wiley \& Sons, 2009.

\bibitem[EK19]{zbMATH07114706}
Alison~M. Etheridge and Thomas~G. Kurtz.
\newblock Genealogical constructions of population models.
\newblock {\em Ann. Probab.}, 47(4):1827--1910, 2019.

\bibitem[EKW15]{zbMATH06502682}
Maren Eckhoff, Andreas~E. Kyprianou, and Matthias Winkel.
\newblock Spines, skeletons and the strong law of large numbers for superdiffusions.
\newblock {\em Ann. Probab.}, 43(5):2545--2610, 2015.

\bibitem[FFK19]{FFK19}
Dorottya Fekete, Joaquin Fontbona, and Andreas~E Kyprianou.
\newblock Skeletal stochastic differential equations for continuous-state branching processes.
\newblock {\em Journal of Applied Probability}, 56(4):1122--1150, 2019.

\bibitem[FFK20]{zbMATH07284534}
Dorottya Fekete, Joaquin Fontbona, and Andreas~E. Kyprianou.
\newblock Skeletal stochastic differential equations for superprocesses.
\newblock {\em J. Appl. Probab.}, 57(4):1111--1134, 2020.

\bibitem[FJRW24]{zbMATH07854756}
Simone Floreani, Sabine Jansen, Frank Redig, and Stefan Wagner.
\newblock Intertwining and duality for consistent {Markov} processes.
\newblock {\em Electron. J. Probab.}, 29:34, 2024.
\newblock Id/No 67.

\bibitem[FMM19]{zbMATH07142897}
Cl{\'e}ment Foucart, Chunhua Ma, and Bastien Mallein.
\newblock Coalescences in continuous-state branching processes.
\newblock {\em Electron. J. Probab.}, 24:52, 2019.
\newblock Id/No 103.

\bibitem[Gre74]{zbMATH03471400}
D.~R. Grey.
\newblock Asymptotic behaviour of continuous time, continuous state-space branching processes.
\newblock {\em J. Appl. Probab.}, 11:669--677, 1974.

\bibitem[GS83]{GS04}
Iosif~I. Gihman and Anatoli.~V. Skorohod.
\newblock {\em The theory of stochastic processes {II}. {Transl}. from the {Russian} by {S}. {Kotz}. {Repr}. of the 1st ed. 1975}, volume 218 of {\em Grundlehren Math. Wiss.}
\newblock Springer, Cham, 1983.

\bibitem[Har48]{Harris}
Theodore~E. Harris.
\newblock Branching processes.
\newblock {\em Ann. Math. Stat.}, 19:474--494, 1948.

\bibitem[Har02]{zbMATH02001586}
Theodore~E. Harris.
\newblock {\em The theory of branching processes.}
\newblock Mineola, NY: Dover Publications, corrected reprint of the 1963 original edition, 2002.

\bibitem[Hel78]{helland1978continuity}
Inge~S. Helland.
\newblock Continuity of a class of random time transformations.
\newblock {\em Stochastic Processes and their Applications}, 7(1):79--99, 1978.

\bibitem[Jir58]{zbMATH03270278}
Miloslav Jirina.
\newblock Stochastic branching processes with continuous state space.
\newblock {\em Czech. Math. J.}, 8:292--312, 1958.

\bibitem[JL23]{zbMATH07789648}
Samuel G.~G. Johnston and Amaury Lambert.
\newblock The coalescent structure of uniform and {Poisson} samples from multitype branching processes.
\newblock {\em Ann. Appl. Probab.}, 33(6A):4820--4857, 2023.

\bibitem[Kal02]{Kallenberg}
Olav Kallenberg.
\newblock {\em Foundations of modern probability}.
\newblock Probability and its Applications (New York). Springer-Verlag, New York, second edition, 2002.

\bibitem[KN11]{zbMATH05919853}
Thomas~G. Kurtz and Giovanna Nappo.
\newblock The filtered {Martingale} problem.
\newblock In {\em The Oxford handbook of nonlinear filtering}, pages 129--165. Oxford: Oxford University Press, 2011.

\bibitem[KO88]{zbMATH04069937}
Thomas~G. Kurtz and Daniel~L. Ocone.
\newblock Unique characterization of conditional distributions in nonlinear filtering.
\newblock {\em Ann. Probab.}, 16(1):80--107, 1988.

\bibitem[Kur98]{zbMATH01215969}
Thomas~G. Kurtz.
\newblock Martingale problems for conditional distributions of {Markov} processes.
\newblock {\em Electron. J. Probab.}, 3:1--29, 1998.
\newblock Id/No 9.

\bibitem[Kur18]{zbMATH07163705}
Thomas~G. Kurtz.
\newblock Genealogical constructions and asymptotics for continuous-time {Markov} and continuous-state branching processes.
\newblock {\em Adv. Appl. Probab.}, 50(A):197--209, 2018.

\bibitem[Kyp14]{Kyprianoubook}
Andreas~E. Kyprianou.
\newblock {\em Fluctuations of {L}\'evy processes with applications}.
\newblock Universitext. Springer, Heidelberg, second edition, 2014.
\newblock Introductory lectures.

\bibitem[Lam67a]{Lamperti2}
John Lamperti.
\newblock Continuous state branching processes.
\newblock {\em Bull. Amer. Math. Soc.}, 73:382--386, 1967.

\bibitem[Lam67b]{Lamperti1}
John Lamperti.
\newblock The limit of a sequence of branching processes.
\newblock {\em Z. Wahrscheinlichkeitstheorie und Verw. Gebiete}, 7:271--288, 1967.

\bibitem[LG99]{LeGalllectures}
Jean-Fran{\c{c}}ois Le~Gall.
\newblock {\em Spatial branching processes, random snakes and partial differential equations}.
\newblock Springer Science \& Business Media, 1999.

\bibitem[Li22]{zbMATH07687769}
Zenghu Li.
\newblock {\em Measure-valued branching {Markov} processes}, volume 103 of {\em Probab. Theory Stoch. Model.}
\newblock Berlin: Springer, 2nd edition edition, 2022.

\bibitem[LUB24]{zbMATH07940610}
Amaury Lambert and Ger{\'o}nimo Uribe~Bravo.
\newblock Totally ordered measured trees and splitting trees with infinite variation. {II}: {Prolific} skeleton decomposition.
\newblock {\em ALEA, Lat. Am. J. Probab. Math. Stat.}, 21(2):1275--1307, 2024.

\bibitem[Mic20]{zbMATH07206399}
Laurent Miclo.
\newblock On the construction of measure-valued dual processes.
\newblock {\em Electron. J. Probab.}, 25:64, 2020.
\newblock Id/No 6.

\bibitem[PR81]{RP81}
Jim Pitman and Chris Rogers.
\newblock Markov functions.
\newblock {\em The Annals of Probability}, pages 573--582, 1981.

\bibitem[PS13]{arXiv:1306.0857}
Soumik Pal and Mykhaylo Shkolnikov.
\newblock Intertwining diffusions and wave equations.
\newblock Preprint, {arXiv}:1306.0857 [math.{PR}], 2013.

\bibitem[Rud76]{zbMATH03539473}
Walter Rudin.
\newblock Principles of mathematical analysis. 3rd ed.
\newblock International {Series} in {Pure} and {Applied} {Mathematics}. {D{\"u}sseldorf} etc.: {McGraw}-{Hill} {Book} {Company}. {X}, 342 p. {DM} 47.80 (1976)., 1976.

\bibitem[Rud87]{RUD}
Walter Rudin.
\newblock {\em Real and complex analysis.}
\newblock New York, NY: McGraw-Hill, 3rd ed. edition, 1987.

\bibitem[RY05]{zbMATH02150787}
Daniel Revuz and Marc Yor.
\newblock {\em Continuous martingales and {Brownian} motion}, volume 293 of {\em Grundlehren Math. Wiss.}
\newblock Berlin: Springer, 3rd ed., 3rd. corrected printing edition, 2005.

\bibitem[Sil68]{zbMATH03294035}
Martin~L. Silverstein.
\newblock A new approach to local times.
\newblock {\em J. Math. Mech.}, 17:1023--1054, 1968.

\bibitem[Wat69]{WA69}
Shinzo Watanabe.
\newblock On two dimensional {M}arkov processes with branching property.
\newblock {\em Transactions of the American Mathematical Society}, 136:447--466, 1969.

\end{thebibliography}
\end{document}